\documentclass{amsart}
\usepackage{amssymb,latexsym, amsmath, amscd, array, graphicx}

\swapnumbers
\numberwithin{equation}{section}

\theoremstyle{plain}

\newtheorem{thm}{Theorem}[section]
\newtheorem{cor}[thm]{Corollary}

\newtheorem{lem}[thm]{Lemma}
\newtheorem{lemma}[thm]{Lemma}
\newtheorem{conjec}[thm]{Conjecture}
\newtheorem{question}[thm]{Problem}
\newtheorem{prop}[thm]{Proposition}

\theoremstyle{definition}

\newtheorem{rem}[thm]{Remark}
\newtheorem{remark[thm]}{Remark}

%% Useful operator names

\def\im{\protect\operatorname{im}}

\def\dim{\protect\operatorname{dim}}

\def\Int{\protect\operatorname{Int}}
\def\ker{\protect\operatorname{ker}}

\def\cat{\protect\operatorname{cat}}

%% Greek letters

%\mathchardef\boldxi="0\hexdigit
%% Bbb letters

\def\C{{\mathbb C}}
\def\Z{{\mathbb Z}}

\def\N{{\mathbb N}}

\def\1{\hbox{\rm\rlap {1}\hskip.03in{\rom I}}}
\def\Bbbone{{\rm1\mathchoice{\kern-0.25em}{\kern-0.25em}
{\kern-0.2em}{\kern-0.2em}I}}

%% Other stuff

\long\def\forget#1\forgotten{} %

\begin{document}

\title[The LS-category of the product of lens spaces]
{The LS-category of the product of lens spaces}
\author[A.~Dranishnikov]
{Alexander Dranishnikov}
\address{A. Dranishnikov, Department of Mathematics, University
of Florida, 358 Little Hall, Gainesville, FL 32611-8105, USA}
\email{dranish@math.ufl.edu}
\thanks{Supported by NSF grant DMS-1304627. The author would like to thank the Max-Planck Institut f\"{u}r Mathematik for the hospitality}

\begin{abstract}
We reduce Rudyak's conjecture that a degree one map between closed manifolds cannot raise the Lusternik-Schnirelmann category
to the computation of the category of the product of two lens spaces $L^n_p\times L_q^n$ with relatively prime $p$ and $q$. We have computed $\cat(L^n_p\times L^n_q)$ for values of $p,q>n/2$. It turns out that our computation supports the conjecture. 

For spin manifolds $M$ we establish a criterion for the equality $\cat M=\dim M-1$ which is a K-theoretic refinement of the Katz-Rudyak criterion
for $\cat M=\dim M$. We apply it to obtain the inequality $\cat(L^n_p\times L^n_q)\le 2n-2$ for all odd $n$ and odd relatively prime $p$ and $q$.
\end{abstract}

\maketitle

\section{Introduction}

This paper was motivated by the following conjecture from Rudyak's paper~\cite{R1}
\begin{conjec}
A degree one map between closed manifolds  cannot raise the Lusternik-Schnirelmann category.
\end{conjec}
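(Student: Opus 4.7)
The plan is to approach Rudyak's conjecture by seeking a lower bound for Lusternik--Schnirelmann category that is simultaneously sharp on manifolds and functorial under degree-one maps; if no such direct argument is available, to reduce the conjecture to a computation of $\cat$ on a tractable family of model manifolds. Let $f:M\to N$ be a degree-one map between closed oriented $n$-manifolds.

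First I would exhaust the elementary cohomological route. The projection formula $f_*\circ f^*=\deg(f)\cdot\id$ shows that $f^*:H^*(N;R)\to H^*(M;R)$ is split injective for every coefficient ring $R$, hence the cup-length of $M$ is at least the cup-length of $N$. This already settles the conjecture whenever $\cat(N)$ equals the cup-length of $N$. The essential difficulty is therefore the gap $\cat(N)-\text{cup-length}(N)$, which can be strictly positive for manifolds like products of lens spaces whose fundamental group produces only torsion classes in cohomology.

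Next I would try to close the gap using the Berstein--Svarc class $\mathfrak{s}_X\in H^1(X;I(\Z[\pi_1 X]))$, for which $\cat(X)$ equals the largest $k$ with $\mathfrak{s}_X^k\ne 0$ in twisted cohomology. The hoped-for statement is that $f^*\mathfrak{s}_N$ is the Svarc class of $M$ relative to the pulled-back coefficient system and that its top power survives, yielding $\cat(M)\ge\cat(N)$ at once. The main obstacle, and precisely why the conjecture remains open, is that $f_\#:\pi_1(M)\to\pi_1(N)$ is only a surjection, so the pulled-back coefficient system is a restriction along an epimorphism of groups, and the top power of $\mathfrak{s}_N$ may a priori vanish in the restricted system even while it survives on $N$. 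A K-theoretic refinement along the lines of the paper's Katz--Rudyak strengthening for spin manifolds looks like the most promising tool here, since K-theoretic obstructions transfer more robustly along such restrictions.

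If this direct route cannot be made to work, I would follow the reduction strategy indicated in the abstract. One attempts to show that any hypothetical counterexample $f:M\to N$ with $\cat(M)<\cat(N)$ can be surgically modified so that the essential $\pi_1(N)$-information is captured by $\Z/p\times\Z/q$ with relatively prime $p,q$, reducing the conjecture to a sharp lower bound for $\cat(L^n_p\times L^n_q)$; the computation of that category in the range $p,q>n/2$ then either confirms or refutes the conjecture in the universal case. The hardest single step in either approach is producing a lower bound for $\cat$ that strictly exceeds the cup-length on such products, and I expect the K-theoretic framework introduced in the paper to be the decisive technical device.
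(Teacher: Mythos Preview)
The statement in question is Rudyak's \emph{conjecture}; the paper does not prove it, and it remains open. Your proposal is accordingly not a proof either---it is a catalogue of strategies, each of which you yourself concede is incomplete. Two specific errors deserve attention.

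First, your assertion that ``$\cat(X)$ equals the largest $k$ with $\mathfrak{s}_X^k\ne 0$'' is false for general $X$. The Berstein--Schwarz class lives in $H^1(\pi;I(\Z\pi))$ and its powers compute the cohomological dimension of $\pi$; pulled back to $X$ they give only a \emph{lower} bound for $\cat(X)$. For simply connected $X$ the class is zero while $\cat(X)$ can be arbitrarily large, so this route cannot close the cup-length gap you correctly identified.

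Second, and more importantly, you have inverted the direction of the paper's ``reduction.'' The paper does \emph{not} argue that an arbitrary hypothetical counterexample can be surgically modified into one with target $L^n_p\times L^n_q$. What it does is construct, for any relatively prime $p,q$, an explicit degree-one map
\[
\phi:\; k(L^{n}_p\times S^{n})\#\, l(S^{n}\times L^{n}_q)\;\longrightarrow\; L^{n}_p\times L^{n}_q
\]
whose domain has category exactly $n+1$ (by the product and connected-sum formulas). Hence if $\cat(L^n_p\times L^n_q)>n+1$ for some such $p,q$, the map $\phi$ is a counterexample to the conjecture. The subsequent computations showing $\cat(L^n_p\times L^n_q)=n+1$ for $p,q\ge n/2$ therefore \emph{support} the conjecture by ruling out this family of potential counterexamples; they do not prove it, and even a complete answer of ``no'' to Problem~1.3 would only close off this particular source of counterexamples rather than establish the conjecture in general.
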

It is known that  degree one maps $f:M\to N$ between manifolds tend to have the domain more complex than the image. The Lusternik-Schnirelmann category is a numerical invariant that measures the complexity of a space. Thus,  Rudyak's
conjecture that $\cat M\ge \cat N$ for a degree one map $f:M\to N$ is quite  natural. 
In~\cite{R1} (see also the book~\cite{CLOT}, page 65) Rudyak obtained some partial results supporting the conjecture. 
In particular, he proved the following
\begin{thm}[\cite{R1}]\label{rud} Let $f:M\to N$ be a degree $\pm 1$ map between closed  stably parallelizable
$n$-manifolds, $n\ge 4$, such that  $2\cat N\ge n+4$. Then $\cat M\ge \cat N$.
\end{thm}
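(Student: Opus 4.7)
The plan is to detect $\cat N$ by the (Rudyak--Strom) category weight $\cwgt$ of a suitable ``fundamental'' cohomology class of $N$, and then transfer this lower bound to $M$ via the degree $\pm 1$ hypothesis. Recall that $\cwgt(u)$ is the largest $k$ such that $g^*u=0$ for every $g\colon Y\to X$ with $\cat Y<k$; one has $\cwgt(u)\le\cat X$ whenever $u\neq 0$, and $\cwgt$ is monotone under arbitrary pullbacks.

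The first step --- the heart of the argument --- is to produce a twisted top-degree class $\mu_N\in H^n(N;\mathcal L)$ (for instance a Berstein--Schwarz power $\mathfrak b_N^n$, or the Poincar\'e dual of a point with appropriate local coefficients) whose category weight realizes $\cat N$ on the nose:
\[
\cwgt(\mu_N)=\cat N.
\]
The inequality $\cwgt(\mu_N)\le\cat N$ is automatic, so the content is the reverse inequality. I would establish it by an obstruction-theoretic analysis of sections of the Ganea fibration $G_{\cat N-1}(N)\to N$: the hypothesis $2\cat N\ge n+4$ places us in the metastable range in which the primary obstruction to a section lives in a cohomology group of degree $n$ and is detected by $\mu_N$, while stable parallelizability trivializes tangential twistings so that this obstruction actually lies in $H^n(N;\mathcal L)$ rather than in a further twisted variant.

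Granting Step~1, the remainder is essentially formal. Since $f$ has degree $\pm 1$ between closed orientable $n$-manifolds (orientability follows from stable parallelizability), Poincar\'e duality yields an umkehr homomorphism $f^!\colon H^*(M;f^*\mathcal L)\to H^*(N;\mathcal L)$ with $f^!\circ f^*=\pm\id$. In particular $f^*$ is injective, so $f^*\mu_N\neq 0$ in $H^n(M;f^*\mathcal L)$, and pullback monotonicity of category weight gives
\[
\cat M\ge \cwgt(f^*\mu_N)\ge \cwgt(\mu_N)=\cat N,
\]
as desired. The main obstacle is Step~1: the equality $\cwgt(\mu_N)=\cat N$ is a delicate metastable phenomenon, and it is precisely where the hypotheses $n\ge 4$, $2\cat N\ge n+4$, and stable parallelizability are consumed. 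Once such a $\mu_N$ is identified, the transfer to $M$ and the conclusion are immediate from the umkehr map together with the general properties of $\cwgt$.
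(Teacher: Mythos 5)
First, a caveat: the paper itself offers no proof of Theorem~\ref{rud} --- it is quoted from Rudyak's paper \cite{R1} --- so your proposal has to be measured against Rudyak's original argument. Your outer skeleton is indeed his: produce a ``detecting element'' whose strict category weight equals $\cat N$, pull it back along $f$, use a degree-one umkehr map to see that the pullback is nonzero, and conclude from $\cwgt(f^*u)\ge\cwgt(u)$ and $\cwgt(v)\le\cat M$ for $v\ne 0$. (The definition you state --- vanishing under every $g:Y\to X$ with $\cat Y<k$ --- is the \emph{strict} category weight, which is the right notion here, since the Fadell--Husseini weight is not monotone under arbitrary pullbacks.) So the formal part of your argument is sound.

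The gap is Step~1, and it is not a repairable technicality: the detecting element cannot in general be taken to be an ordinary twisted cohomology class in top degree. Rudyak's detecting element is the obstruction to a section of the Ganea fibration $p_{k-1}:G_{k-1}(N)\to N$ with $k=\cat N$; its fiber $\ast_{k}\,\Omega N$ is $(k-2)$-connected, and the hypothesis $2k\ge n+4$ places the $n$-complex $N$ in the stable range for that fiber, so the entire obstruction tower collapses to a single class --- but that class lives in a \emph{generalized} cohomology theory (fiberwise stable cohomotopy), in degree $k$, not in $H^n(N;\mathcal L)$. Neither of your proposed candidates works: $\beta_\pi^{\,\cat N}$ sits in degree $\cat N$ rather than $n$ and its pullback to $N$ can vanish even when $\cat N$ is large (its nonvanishing is governed by $\cd(\pi_1 N)$, cf.\ Corollary~\ref{cd}), while the dual of a point only reproduces cup-length--type bounds; asserting that \emph{some} ordinary class always has $\cwgt$ exactly $\cat N$ would make $\cat$ a cohomologically computable invariant, which it is not, and the metastable hypothesis does not change that. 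Your account of where stable parallelizability enters is the tell-tale sign: if the detecting class lived in $H^n(N;\mathcal L)$, the ordinary Poincar\'e-duality umkehr $f_!$ with $f_!f^*=\pm\id$ would exist for \emph{any} degree-one map between closed oriented manifolds, and the parallelizability hypothesis would be superfluous. Its actual role is to supply the umkehr in the generalized theory where the detecting element lives: for stably parallelizable $M$ the Spanier--Whitehead dual of $M_+$ is $\Sigma^{-n}M_+$, so a transfer $f_!$ with $f_!f^*=\pm\id$ exists in every cohomology theory. In short, the two genuinely hard points of the theorem --- constructing the detecting element in the stable range and running the transfer in generalized cohomology --- are exactly what your proposal defers to Step~1 and, as formulated there, gets wrong.
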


In this paper we reduce Rudyak's conjecture to the following question about the LS-category of the product of two
$n$-dimensional  lens spaces ($n=2k-1$).

\begin{question}\label{qu}
Do there exist $n$ and relatively prime $p$ and $q$ such that $$\cat(L^{n}_p\times L^{n}_{q})> n+1\ ?$$
\end{question}
We show that an affirmative answer to this problem  gives a counter-example to Rudyak's conjecture.

This paper is devoted to computation of the category of the product $L^{n}_p\times L^{n}_{q}$ of lens spaces
for relatively prime $p$ and $q$. Here we use an abbreviated notation $L^n_p=L^n_p(\ell_1,\dots,\ell_k)$ for a general  lens space of dimension $n=2k-1$ defined for the linear $\Z_p$-action on $S^n\subset\C ^k$ determined by the set of natural numbers $(\ell_1,\dots,\ell_k)$ with $(p,\ell_i)=1$ for all $i$.

The obvious inequality $\cat X\le \dim X$ and the cup-length lower bound~\cite{CLOT} give  the 
estimates
 $$(*)\ \ \ \ \ \ \ \ \ \ \ \ \ \ n+1\le\cat(L^{n}_p\times L^{n}_{q})\le 2n.$$ 

In this paper we prove that for fixed $n$ almost always the lower bound is sharp.
\begin{thm}\label{m}
 For every $n=2k-1$ and primes $p,q\ge k$, $p\ne q$, for all lens spaces $L^n_p$ and $L^n_q$, 
$$\cat(L^{n}_p\times L^{n}_{q})=n+1.$$
\end{thm}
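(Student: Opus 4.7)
The lower bound $\cat(L^n_p\times L^n_q)\ge n+1$ is the estimate $(*)$; I would give it a modern proof via Rudyak--Strom category weight. Since $\gcd(p,q)=1$, the space $L^n_q$ is $\mathbb F_p$-equivalent to $S^n$, so the top class $\beta\in H^n(L^n_q;\mathbb F_p)$ is nonzero and has category weight at least $1$. The top class $\alpha\in H^n(L^n_p;\mathbb F_p)$ has category weight exactly $n$, since $\cat L^n_p=n$ is detected by $\alpha$. By subadditivity of category weight under external products, the nonzero class $\alpha\times\beta\in H^{2n}(L^n_p\times L^n_q;\mathbb F_p)$ has weight $\ge n+1$, giving the lower bound.

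For the upper bound $\cat(L^n_p\times L^n_q)\le n+1$, the naive estimate $\cat(X\times Y)\le\cat X+\cat Y=2n$ is too weak, so the special structure of the situation must be brought in. I would rely on three ingredients: (i) the universal cover $\wt X=S^n\times S^n$ is $(n-1)$-connected with $\cat\wt X=2$; (ii) the fundamental group splits as $\Z_{pq}\cong\Z_p\times\Z_q$ by coprimality, inducing a tensor decomposition $\Z[\Z_{pq}]\cong\Z[\Z_p]\otimes_{\Z}\Z[\Z_q]$ of group rings; and (iii) the primality-plus-size condition $p,q\ge k$ forces all nontrivial mod-$p$ Steenrod powers of the generator $u\in H^2(L^n_p;\mathbb F_p)$ to vanish (since $u^p=0$ already holds in $H^*(L^n_p;\mathbb F_p)$), and symmetrically at $q$.

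My plan is to run an obstruction-theoretic analysis of the Ganea fibration $G_{n+1}(X)\to X$, whose admission of a section is equivalent to $\cat X\le n+1$. The fiber is the iterated join $(\Omega X)^{*(n+2)}$, and since the identity component of $\Omega X$ is homotopy equivalent to $\Omega\wt X=\Omega(S^n\times S^n)$, which is $(n-2)$-connected, this iterated join acquires large connectivity. The primary obstructions to a section live in twisted-coefficient cohomology groups $H^i(X;\mathcal A)$ for $\Z[\Z_{pq}]$-modules $\mathcal A$ built from the fiber's homotopy, and they vanish by combining a dimension count against $\dim X=2n$ with the Steenrod vanishings and the tensor splitting at the two coprime primes. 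The main obstacle will be handling the nontrivial $\pi_1$-action on the $pq$ path components of $\Omega X$; here the primality of $p,q$ together with $p,q\ge k$ are essential for forcing the higher obstructions to vanish in the range of interest, and without these hypotheses the argument is not expected to go through.
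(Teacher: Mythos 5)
Your lower bound is fine and is essentially the paper's: the cup-length/category-weight estimate $\cat(L^n_p\times L^n_q)\ge n+1$ is the inequality $(*)$ from the introduction and needs no primality or size hypotheses (though note that the claim $\wgt(\alpha)=n$ for the top class of $L^n_p$ is itself a consequence of the Berstein--Schwarz universality theorem, not merely of ``$\cat L^n_p=n$ is detected by $\alpha$''). The problem is the upper bound, which is the entire content of the theorem, and there your proposal has a genuine gap. You set up the correct framework --- a section of $p^X_{n+1}:G_{n+1}(X)\to X$ over the $2n$-dimensional complex $X=L^n_p\times L^n_q$, with fiber the $(n+2)$-fold join of $\Omega X$ --- but the argument then consists of the assertion that the obstructions in $H^{i+1}(X;\pi_i(F_{n+1}))$ for $n+1\le i\le 2n-1$ ``vanish by combining a dimension count with the Steenrod vanishings and the tensor splitting.'' No such computation is given, and that is precisely where all the work lies: the coefficient modules $\pi_i(\ast^{n+2}\Omega X)$ mix the $\Z\Z_{pq}$-module $I(pq)^{\otimes(n+2)}$ with the stable homotopy of spheres, and only the primary obstruction is the transparent class $u^*(\beta_{pq}^{n+2})$ (whose vanishing the paper proves in Theorem~\ref{ber-nul} with no hypotheses on $p,q$ beyond coprimality). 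The vanishing of $P^1(u)=u^p$ in $H^*(L^n_p;\mathbb{F}_p)$ says nothing direct about these obstruction groups. Worse, the paper's own Remark following Theorem~\ref{main} states that when this obstruction-theoretic route is actually carried out (using that $H^*(\Z_{pq};-)$ is $pq$-torsion together with Serre's theorem that $\pi_{n+j}(S^n)$ has no $r$-torsion for $j<2r-4$), it only yields the result under the stronger hypothesis $p,q\ge n+3=2k+2$; so the range $p,q\ge k$ you claim is unlikely to be reachable this way without a substantially new idea.

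The paper's actual proof is entirely different. It constructs (Theorem~\ref{ex}) a degree one map $k(L^{n}_p\times S^{n})\#\, l(S^{n}\times L^{n}_q)\to L^n_p\times L^n_q$ from Bezout coefficients $lp+kq=1$; the source has category $n+1$ by the connected-sum formula (Theorem~\ref{th1}) and the product formula. Rudyak's Theorem~\ref{rud} then rules out $\cat(L^n_p\times L^n_q)\ge n+2$ provided all manifolds involved are stably parallelizable; the hypotheses $p,q\ge k$ and primality enter only to guarantee, via Kwak's existence theorem for stably parallelizable fake lens spaces and Wall's theorem that fake lens spaces are homotopy equivalent to genuine ones, that suitable stably parallelizable models $L^n_p(\bar\mu)$, $L^n_q(\bar\mu')$ exist. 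Finally the bound is transported from these particular lens spaces to arbitrary ones by the $\pi_1$-isomorphism $L^n_p(\bar\ell)\to L^n_p(\bar\mu)$ of Proposition~\ref{map} together with Theorem~\ref{space-map}, which converts $\cat(u')\le n+1$ for a classifying map back into $\cat X\le n+1$ using the $(n-1)$-connectivity of the universal cover. To salvage your approach you would have to identify the higher obstruction groups explicitly and prove they vanish in the stated range; as written, that central step is missing.
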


This result still leaves some hope to have $\cat(L^n_p\times L^n_q)>n+1$  for small values of $p$ (especially for $p=2$) for some lens spaces. 

In the second part of the paper we make an improvement of the upper bound in $(*)$.  The first improvement
comes easy by virtue of the Katz-Rudyak criterion~\cite{KR}: {\em For a closed $m$-manifold $M$ the inequality $\cat(M)\le m-1$ 
holds if and only if $M$ is inessential.}  We recall that Gromov calls a $m$-manifold $M$ {\em inessential} if a map $u:M\to B\pi$ that classifies its universal covering can be deformed to the $(m-1)$-dimensional skeleton $B\pi^{(m-1)}$. Since for relatively prime $p$ and $q$ the product $L^n_p\times L^n_q$ is inessential,
we have $\cat(L^n_p\times L^n_q)\le 2n-1$. In the paper we improve this inequality to the following:
\begin{thm}
For all odd $n$ and odd relatively prime $p$ and $q$,
$$
\cat(L^n_p\times L^n_q)\le 2n-2.$$
\end{thm}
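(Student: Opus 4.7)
The plan is to apply the K-theoretic refinement of the Katz--Rudyak criterion from the earlier section: for a closed spin $m$-manifold $M$, it characterises $\cat M\le m-2$ by the combination of inessentiality and the vanishing of the $ko$-theoretic fundamental push-forward $u_\ast[M]_{ko}\in ko_m(B\pi)$. Since $M=L^n_p\times L^n_q$ has already been shown to be inessential, only the $ko$-theoretic vanishing remains to be verified.

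First I would set up the data. For odd $p$, the mod-$2$ cohomology $H^\ast(L^n_p;\Z_2)$ agrees with that of $S^n$, so every Stiefel--Whitney class of $L^n_p$ vanishes and $L^n_p$ is spin; hence $M$ is a closed spin $2n$-manifold, and by multiplicativity of the Atiyah--Bott--Shapiro orientation $[M]_{ko}=[L^n_p]_{ko}\times[L^n_q]_{ko}$. The Chinese remainder theorem identifies $B\pi\simeq B\Z_p\times B\Z_q$ and $u=u_p\times u_q$, so that
\[
u_\ast[M]_{ko}=u_{p\ast}[L^n_p]_{ko}\,\times\,u_{q\ast}[L^n_q]_{ko}\quad\text{in }ko_{2n}(B\Z_p\times B\Z_q).
\]

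Using the stable splitting $\Sigma(X\times Y)\simeq\Sigma X\vee\Sigma Y\vee\Sigma(X\wedge Y)$, I would then analyse the four summands
\[
ko_{2n}(B\Z_p\times B\Z_q)=\widetilde{ko}_{2n}(B\Z_p)\oplus\widetilde{ko}_{2n}(B\Z_q)\oplus\widetilde{ko}_{2n}(B\Z_p\wedge B\Z_q)\oplus ko_{2n}
\]
one by one. The $ko_{2n}$ component of $u_\ast[M]_{ko}$ is the product $\alpha(L^n_p)\cdot\alpha(L^n_q)$ of $\alpha$-invariants, and both factors vanish because an odd-dimensional lens space $L^n_r$ with $r$ odd bounds a spin manifold. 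The components in $\widetilde{ko}_{2n}(B\Z_p)$ and $\widetilde{ko}_{2n}(B\Z_q)$ factor through the $\alpha$-invariant of the complementary factor (via push-forward along the projection) and vanish for the same reason. For the smash-product summand, coprimality of $p$ and $q$ (together with $p,q$ odd) gives $\widetilde{H}^\ast(B\Z_p\wedge B\Z_q;\Z/\ell)=0$ at every prime $\ell$, so $\widetilde{ko}_\ast(B\Z_p\wedge B\Z_q)=0$. Consequently $u_\ast[M]_{ko}=0$, and the K-theoretic criterion delivers $\cat(L^n_p\times L^n_q)\le 2n-2$.

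The principal obstacle is establishing the vanishing of $\alpha(L^n_p)$ for odd $n$ and odd $p$. For $n\not\equiv 1\pmod 8$ this is automatic from $\pi_n(ko)=0$, but for $n\equiv 1\pmod 8$ one has to exhibit an explicit spin nullbordism of $L^n_p$; the standard construction cones off the $\Z_p$-action on the disk $D^{n+1}\subset\C^{(n+1)/2}$ and verifies that the resulting $(n+1)$-manifold is spin when $p$ is odd. Once this is in hand, the splitting argument is essentially formal.
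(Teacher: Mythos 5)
Your overall strategy coincides with the paper's: invoke the $ko$-criterion (Theorem~\ref{cat=n-1}) and verify the hypothesis by writing $u_*([M]_{ko})$ as an external product over the two factors and killing it using coprimality. (One small omission: the criterion also requires $\pi_2(M)=0$; this holds because the universal cover of $L^n_p\times L^n_q$ is $S^n\times S^n$ with $n\ge 3$, but you should say so.) Your four-summand decomposition is actually more explicit than the paper's argument, which disposes of everything by noting that $\widetilde{ko}_*(B\Z_p)$ is $q$-divisible while $\widetilde{ko}_*(B\Z_q)$ is $q$-torsion; in particular you correctly isolate the one component that coprimality alone cannot kill, namely the product $\alpha(L^n_p)\cdot\alpha(L^n_q)\in ko_{2n}(pt)$, which is a genuine issue exactly when $n\equiv 1\pmod 8$ (where $ko_n(pt)=\Z_2$ and $\eta^2\ne 0$).

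The gap is in your justification of $\alpha(L^n_p)=0$. The quotient $D^{n+1}/\Z_p$ is \emph{not} a manifold: the origin is a fixed point of the linear action, and the link of its image in the quotient is $L^n_p$ itself rather than a sphere, so ``coning off the action on the disk'' does not produce a spin nullbordism. (Whether $L^n_p$ bounds a spin manifold at all is a genuinely delicate question, and your argument would need it only through the weaker statement $\alpha=0$ anyway.) The vanishing you need is true and has a short correct proof: the round metric on $S^n$ descends to a metric of positive scalar curvature on $L^n_p$, so by Lichnerowicz there are no harmonic spinors and the mod~$2$ index vanishes, i.e.\ $\alpha(L^n_p)=0$ (Hitchin's theorem). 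Equivalently, for odd $p$ the kernel of the Dirac operator upstairs splits under the deck group into the invariant part plus conjugate pairs of isotypic summands, so the mod~$2$ index is unchanged under the covering $S^n\to L^n_p$ and equals $\alpha(S^n)=0$. With that repair, and the remark about $\pi_2$, your argument is complete and is essentially a carefully spelled-out version of the paper's Proposition~\ref{vanish} and Corollary~\ref{cor}.
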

For that we study a general question: When the LS-category of a closed spin $m$-manifold $M$ is less than $m-1$?  We proved that for a closed $m$-manifolds $M$ with $\pi_2(M)=0$ the inequality $\cat M\le m-2$ holds if and only if the map $u:M\to B\pi$ can be deformed to the $(m-2)$-dimensional skeleton $B\pi^{(m-2)}$. A deformation of a classifying map of a manifold to the $(m-2)$-skeleton $B\pi^{(m-2)}$ is closely related to
Gromov's conjecture on manifolds with positive scalar curvature and it was investigated in~\cite{BD}. Combining this with some ideas from~\cite{BD} we produce a criterion for
when a closed spin $m$-manifold $M$ has $\cat M\le m-2$. The criterion 
involves the vanishing of the integral homology and  $ko$-homology fundamental classes of $M$ under a map classifying the universal covering of $M$.
\begin{thm}[Criterion]
 For a closed spin inessential $m$-manifold $M$ with $\pi_2(M)=0$,  
$$\cat M\le\dim M-2$$ if and only if  $j_*u_*([M]_{ko})= 0$ where $j:B\pi\to B\pi/B\pi^{(m-2)}$ is the quotient map.
\end{thm}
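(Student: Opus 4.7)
The plan is to invoke the preceding theorem (applicable since $\pi_2(M)=0$), which translates the inequality $\cat M \le m-2$ into the null-homotopy of $j\circ u : M \to Y$, where $Y := B\pi/B\pi^{(m-2)}$. The forward direction of the Criterion is then immediate: a null-homotopy of $j \circ u$ kills the image $j_*u_*([M]_{ko})$, so the task reduces to the converse.

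The key structural observation is that $Y$ is $(m-2)$-connected: since $B\pi^{(m-2)}$ contains $B\pi^{(1)}$, $Y$ is simply connected, and $H_p(Y)=0$ for $p \le m-2$. Consequently the Atiyah-Hirzebruch spectral sequence $E^2_{p,q}=H_p(Y;ko_q(\pt))\Rightarrow ko_{p+q}(Y)$ has in total degree $m$ only two potentially nonzero entries, $E^2_{m,0} = H_m(Y;\Z)$ and $E^2_{m-1,1} = H_{m-1}(Y;\Z/2)$, and every possible differential into or out of these positions lands in a zero group. The spectral sequence therefore collapses to a short exact sequence
\[
0 \to H_{m-1}(Y;\Z/2) \to ko_m(Y) \to H_m(Y;\Z) \to 0,
\]
under which $j_*u_*([M]_{ko})$ has $H_m(Y;\Z)$-component equal to $j_*u_*([M])$ and a secondary $\Z/2$-valued component in $H_{m-1}(Y;\Z/2)$.

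Assuming $j_*u_*([M]_{ko}) = 0$, both components vanish. Obstruction theory for the null-homotopy of $j \circ u$ through the Postnikov tower of $Y$ gives a primary obstruction in $H^{m-1}(M;\pi_{m-1}(Y)) = H^{m-1}(M; H_{m-1}(Y))$, which via Poincar\'e duality on the oriented manifold $M$ is the homology class $j_*u_*([M])$; this vanishes by hypothesis, so $j \circ u$ lifts one further Postnikov stage. The remaining secondary obstruction lies in $H^m(M;\pi_m(Y)) \cong \pi_m(Y)$. Following~\cite{BD}, the spin structure on $M$ is exploited here: the ko-fundamental class $[M]_{ko}$, constructed via the Atiyah-Bott-Shapiro orientation, carries precisely the mod-$2$ data needed to detect the $\eta$-multiple summand of $\pi_m(Y)$ corresponding to $H_{m-1}(Y;\Z/2)$ in the AHSS filtration. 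Identifying this summand of the secondary obstruction with the $H_{m-1}(Y;\Z/2)$-component of $j_*u_*([M]_{ko})$ supplies the missing vanishing and completes the null-homotopy. This identification -- fusing the Pontryagin-Thom/ABS spin picture with classical obstruction theory -- is the technical heart of the argument and the main obstacle; it is where the spin hypothesis is essential and where the machinery of~\cite{BD} is needed in a form adapted from the positive-scalar-curvature setting.
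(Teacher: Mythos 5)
Your overall strategy --- collapse to the $(m-2)$-connected quotient $Y=B\pi/B\pi^{(m-2)}$ and exploit that in total degree $m$ only $\pi_0^s=\Z=ko_0$ and $\pi_1^s=\Z/2=ko_1$ contribute, so that $ko_m(Y)$ records exactly the homotopy-theoretic obstruction --- is the same as the paper's. But there are two gaps, and the more serious one is hidden in your opening sentence: the equivalence ``$\cat M\le m-2$ iff $j\circ u$ is null-homotopic'' is not a citable black box; it is the hard point of this section. Via Proposition~\ref{deform} and Theorem~\ref{space-map} (this is where $\pi_2(M)=0$ enters), $\cat M\le m-2$ is equivalent to compressing $u$ \emph{itself} into $B\pi^{(m-2)}$, and the obstruction to that compression lives in $H^m(M;\pi_m(B\pi,B\pi^{(m-2)}))\cong\pi_m(B\pi,B\pi^{(m-2)})_{\pi}$, the group of coinvariants --- not in $\pi_m(Y)$. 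Null-homotopy of $j\circ u$ only tells you that the image of this obstruction in $\pi_m(Y)$ vanishes. To conclude that the obstruction itself vanishes one needs the natural map $\pi_m(B\pi,B\pi^{(m-2)})_{\pi}\to\pi_m(B\pi/B\pi^{(m-2)})$ to be injective; this is Lemma~\ref{coinvariants-mono}, whose proof is a substantial diagram chase in stable homotopy and which the paper explicitly highlights as closing a gap in~\cite{BD}. Your proposal never confronts this step.

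Second, your obstruction-theoretic bookkeeping is off. The primary obstruction to null-homotoping $j\circ u$ lies in $H^{m-1}(M;\pi_{m-1}(Y))$, whose Poincar\'e dual is a class in $H_1(M;\pi_{m-1}(Y))$ detecting $(j\circ u)_*$ on $H_{m-1}(M)$; it is not $j_*u_*([M])\in H_m(Y)$, and its vanishing is not an automatic consequence of inessentiality. The paper avoids this issue entirely: by Proposition~\ref{BD} an inessential $M$ admits a classifying map with $u(M^{(m-1)})\subset B\pi^{(m-2)}$, so $j\circ u$ factors through the collapse $M\to M/M^{(m-1)}=S^m$ and all the information is concentrated in a single element $\bar u_*(1)\in\pi_m(Y)$. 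The identification of $\bar u_*(1)$ with $j_*u_*([M]_{ko})$ is then immediate from the chain of isomorphisms $\pi_m(Y)\cong\pi_m^s(Y)\cong ko_m(Y)$ (Freudenthal for the $(m-2)$-connected space $Y$, plus Proposition~\ref{pi-ko-iso}); no component-by-component matching of AHSS filtration pieces with Postnikov obstructions is needed, and in particular no delicate $\eta$-multiple analysis. So the step you flag as ``the technical heart'' is in fact the easy part of the argument, while the genuinely hard ingredient --- the coinvariants injectivity --- is absent from your proposal.
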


Since a closed orientable manifold $M$ is inessential
if and only if $u_*([M])=0$ in $H_*(B\pi)$~\cite{Ba}, the Katz-Rudyak criterion for orientable manifolds can be rephrased as follows:
$\cat M\le m-1$ if and only if $u_*([M])=0$. Thus, our criterion is a further refinement of the  Katz-Rudyak criterion. 

It turns out that the vanishing of $u_*([M])$ in $H_*(B\pi)$ makes the primary obstruction to a deformation of 
$u:M\to B\pi$ to $B\pi^{(m-2)}$ trivial. It is not difficult to show that the second obstruction lives in the group of coinvariants $\pi_m(B\pi,B\pi)_{\pi}$
(see~\cite{BD}).
We prove that  the group of coinvariants $\pi_m(B\pi,B\pi^{(m-2)})_{\pi}$ naturally injects into the homotopy group $\pi_m(B\pi/B\pi^{(m-2)})$.
This closes a gap in the computation of the second obstruction in~\cite{BD}.
Based on that injectivity result we use the real connective K-theory to express the second obstruction in terms of the image of the $ko$-fundamental class. The spin condition is needed for the existence of a fundamental class in {\em ko}-theory.

The new upper bound implies that $\cat(L^3_p\times L^3_q)=4$ for all $p$ and $q$. Note that for prime $p$ and $q$ this fact can be also 
derived from Theorem~\ref{m}.

We complete the paper with a proof of the upper bound formula for the category of a connected sum of two manifolds:
\begin{thm}
$$
\cat{M\# N}\le\max\{\cat M,\cat N\}.
$$
\end{thm}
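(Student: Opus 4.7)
Let $k = \max\{\cat M, \cat N\}$. The plan is to construct a categorical open cover of $M\#N$ with $k+1$ elements, starting from categorical covers of $M$ and of $N$.

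Take categorical open covers $\{U_0,\ldots,U_k\}$ of $M$ and $\{V_0,\ldots,V_k\}$ of $N$ (padded with empty sets if needed), and arrange the connected-sum balls $B_M\subset M$ and $B_N\subset N$ so that $\bar B_M\subset U_0$ and $\bar B_N\subset V_0$, with enough slack that $U_0\cap M_0$ and $V_0\cap N_0$ contain one-sided collars of the neck sphere $S^{m-1}$. Writing $M_0=M\setminus\mathrm{int}(B_M)$, $N_0=N\setminus\mathrm{int}(B_N)$, and $M\#N=M_0\cup_{S^{m-1}}N_0$, define
\[
W_i=(U_i\cap M_0)\cup(V_i\cap N_0)\subset M\#N
\]
for each $i$. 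After small technical adjustments of the $U_i$ and $V_i$ for $i>0$ so that each $W_i$ is open in $M\#N$, the family $\{W_i\}_{i=0}^k$ is an open cover of $M\#N$, since $\{U_i\}$ covers $M$ and $\{V_i\}$ covers $N$.

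The main step is to prove that each $W_i$ is null-homotopic in $M\#N$. The idea is to assemble the null-homotopy of $W_i$ from the null-homotopies $H^M_i\colon U_i\times I\to M$ and $H^N_i\colon V_i\times I\to N$ given by categoricity. I would arrange $H^M_i$ to contract $U_i$ radially to the centre of $\bar B_M$, and similarly $H^N_i$ for $V_i$ into $\bar B_N$. In $M\#N$ the two centres have been removed, but each radial spoke of the contraction that would have entered $\bar B_M$ can be rerouted through the opposite half $N_0$ -- using the identification $\partial B_M=\partial B_N=S^{m-1}$ supplied by the connected-sum gluing -- and symmetrically for spokes of $H^N_i$ entering $\bar B_N$. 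Gluing the two rerouted homotopies along the neck should then produce a continuous null-homotopy of $W_i$ in $M\#N$.

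The main obstacle is the continuous implementation of this rerouting, particularly at the neck, where the homotopy on the $M$-side and the homotopy on the $N$-side must be matched consistently. I would handle this by fixing a common trivialization of collars of $S^{m-1}$ on both sides of the neck and choosing the null-homotopies $H^M_i$, $H^N_i$ so that their restrictions to $\partial B_M=\partial B_N$ agree under the identification; each radial spoke into $\bar B_M$ then corresponds to a radial spoke out of $\bar B_N$ run in reverse. With these compatible choices, the combined null-homotopy of $W_i$ is a continuous contraction to a single point in $M\#N$, yielding $\cat(M\#N)\le k$.
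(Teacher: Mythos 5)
Your plan stalls exactly at the point you label ``the main obstacle,'' and the proposed resolution is not an argument. A categorical cover only gives you, for each $U_i$, \emph{some} null-homotopy $H^M_i:U_i\times I\to M$, and this homotopy may sweep through $\mathrm{int}(B_M)$ in a completely uncontrolled way: the set $(H^M_i)^{-1}(\mathrm{int}(B_M))\subset U_i\times I$ need not decompose into ``radial spokes,'' so there is no well-defined rerouting of ``each spoke'' through $N_0$. Likewise, asking that $H^M_i$ and $H^N_i$ ``agree on $\partial B_M=\partial B_N$'' is not meaningful for general null-homotopies: $H^M_i$ restricted to $(U_i\cap\partial B_M)\times I$ takes values in all of $M$, not in the neck sphere, so there is nothing on the two sides to match. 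Since $U_i$ is open (hence $n$-dimensional), you also cannot invoke general position to push its contraction off a point of $B_M$. This is the genuine difficulty of the theorem, not a technicality to be fixed by ``small adjustments.''

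The paper's proof is built precisely to supply the missing control. By Singhof's theorem (Theorem~\ref{singhof}) one may replace the open cover by a categorical partition $Q_0,\dots,Q_k$ in which each piece deformation retracts onto an $(n-k)$-dimensional complex $S_i$; since $k\ge 2$, the track of a contraction of $S_i$ is at most $(n-1)$-dimensional, so by general position it can be chosen to miss a prescribed interior point $c_0$ of the connected-sum ball $D$. Radial projection from $c_0$ then pushes the entire deformation of $Q_i\setminus\mathrm{int}(D)$ into $M\#N$, landing on $A_i=Q_i\cap\partial D$ and fixing $A_i$ pointwise; the symmetric deformation on the $N$-side also ends on $A_i$ fixing it, so the two paste continuously along the neck --- this replaces your unimplemented matching condition. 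The ball is moreover arranged to meet only $Q_0$ and $Q_1$, so the remaining pieces avoid it outright. Without some such dimension bound on the pieces (or Newton's obstruction-theoretic substitute), your construction of the sets $W_i$ is fine but their contractibility in $M\#N$ remains unproven.
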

Since we use this formula in the paper and its original proof in~\cite{N} does not cover all  cases, we supply an alternative proof. 

\section{Preliminaries}

\subsection{LS-category.} The Lusternik-Schnirelmann category $\cat X\le k$ for a topological space $X$ if there is a cover $X=U_0\cup\dots\cup U_k$ by $k+1$ open subsets each of which is contractible in $X$. The subsets contractible in $X$ will be called in this note {\em $X$-contractible} and the covers of $X$ by $X$-contractible sets will be called {\em categorical}.

Let $\pi=\pi_1(X)$. We recall that the cup product $\alpha\smile\beta$ of twisted cohomology classes $\alpha\in H^i(X;L)$ and $\beta\in H^j(X;M)$ takes value in $H^{i+j}(X;L\otimes M)$ where $L$ and $M$ are $\pi$-modules and $L\otimes M$ is the tensor product over $\Z$~\cite{Bro}. Then the cup-length of $X$, denoted as $c.l.(X)$, is defined as the maximal integer $k$ such that $\alpha_1\smile\dots\smile\alpha_k\ne 0$ for some $\alpha_i\in H^{n_i}(X;L_i)$ with $n_i>0$.
The following inequalities give  estimates on the LS-category~\cite{CLOT}:
\begin{thm}\label{cl}
$c.l.(X)\le\cat X\le\dim X.$
\end{thm}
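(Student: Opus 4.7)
The plan is to handle the two inequalities separately; both proofs are classical but I want to be careful about the twisted-coefficient setup used in the definition of $c.l.(X)$ on the preceding lines.

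For the upper bound $\cat X\le\dim X$, I would assume $X$ is a CW complex of dimension $n=\dim X$ (applying CW approximation in the general case, since LS-category is a homotopy invariant). Then I construct a categorical cover $X=U_0\cup\dots\cup U_n$ in which $U_i$ is a small open thickening of the union of the open $i$-cells, obtained by removing from an open neighborhood of the $i$-skeleton a slightly smaller open neighborhood of the $(i-1)$-skeleton. By construction each component of $U_i$ deformation retracts onto a single open $i$-cell, so the components are pairwise disjoint open contractible sets. Using path-connectivity of $X$, a disjoint union of $X$-contractible opens is itself $X$-contractible: contract each component to a chosen basepoint in $X$, then slide the basepoints together along fixed paths. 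This gives $\cat X\le n$.

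For the cup-length bound I take a categorical cover $X=U_0\cup\dots\cup U_k$ with $k=\cat X$ and classes $\alpha_i\in H^{n_i}(X;L_i)$, $n_i>0$. Since each inclusion $\iota_i\colon U_i\hookrightarrow X$ is null-homotopic in $X$, the local system $\iota_i^*L_i$ is trivialized through the null-homotopy and $\iota_i^*\alpha_i=0$. The long exact sequence of the pair $(X,U_i)$ with coefficients in $L_i$ then provides a relative lift $\widetilde\alpha_i\in H^{n_i}(X,U_i;L_i)$ mapping to $\alpha_i$ under $H^{n_i}(X,U_i;L_i)\to H^{n_i}(X;L_i)$. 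The relative cup product combines these into
\[
\widetilde\alpha_0\smile\dots\smile\widetilde\alpha_k\in H^{n_0+\dots+n_k}\bigl(X,\ U_0\cup\dots\cup U_k;\ L_0\otimes\dots\otimes L_k\bigr),
\]
and since $U_0\cup\dots\cup U_k=X$ this group is zero. Naturality of the relative cup product under the maps $H^*(X,U_i)\to H^*(X)$ then gives $\alpha_0\smile\dots\smile\alpha_k=0$, so $c.l.(X)\le k=\cat X$.

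The only point that really needs care is the twisted version of the last step: one must check that for distinct local systems $L_i$ the relative cup product genuinely lands in the relative cohomology of the union with tensor-product coefficients, and that a null-homotopy of $\iota_i$ produces a compatible trivialization of $\iota_i^*L_i$ needed for the existence of the lift $\widetilde\alpha_i$. Both points are in Brown's treatment of cohomology with local coefficients and correspond exactly to the setup recalled in the paragraph preceding the statement; modulo these bookkeeping checks the argument proceeds as in the untwisted case.
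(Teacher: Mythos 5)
Your argument is correct and coincides with the standard proof in the literature: the paper gives no proof of this statement at all, simply citing~\cite{CLOT}, where both inequalities are established exactly along the lines you describe (thickenings of the skeleta assembled into $X$-contractible open sets for $\cat X\le\dim X$, and relative lifts with the relative cup product landing in $H^*(X,X;L_0\otimes\dots\otimes L_k)=0$ for the cup-length bound, valid with twisted coefficients since the $U_i$ are open and a null-homotopy of $\iota_i$ trivializes $\iota_i^*L_i$). The only hypothesis worth making explicit is that $X$ is a path-connected CW complex, which you do invoke when gluing the disjoint contractible components into a single $X$-contractible set; without connectedness the upper bound fails already for $S^0$.
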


\subsection{Ganea-Schwarz approach to the LS-category}
Given two maps~$f_1:X_1\to Y$ and~$f_2:X_2\to Y$, we set
\[
Z=\{(x_1,x_2,t)\in X_1\ast X_2\mid f_1(x_1)=f_2(x_2)\}
\]
and define the {\em fiberwise join}, or {\em join over~$Y$} of~$f_1$, $f_2$
as the map
\[
f_1{\ast_Y}f_2:Z\to Y,\quad (f_1{\ast_X}f_2)(x_1,x_2, t)=f_1(x_1)=f_2(x_2).
\]

Let~$p_0^X:PX\to X$ be the Serre path fibration. This means that
$PX$ is the space of paths on~$X$ that start at the base point~$x_0\in X$, and~$p_0(\alpha)=\alpha(1)$.  We denote by
$p_n^X:G_n(X)\to X$ the iterated fiberwise join of $n+1$ copies of $p_0^X$.
Thus, the fiber $F_n=(p^X_n)^{-1}(x_0)$ of the fibration $p^X_n$ is the join product $\Omega X\ast\dots\ast\Omega X$ of $n+1$ copies of the loop space
$\Omega X$ on $X$. So, $F_n$ is $(n-1)$-connected.
It is known that $G_n(X)$ is homotopy equivalent to the mapping cone of the inclusion of the fiber $F_{n-1}\to G_{n-1}(X)$.

When $X=K(\pi,1)$, the loop space $\Omega X$ is naturally homotopy equivalent to $\pi$ and the space $G_n(\pi)=G_n(K(\pi,1))$ has the homotopy type of
a $n$-dimensional complex.

The proof of the following theorem can be found in \cite{CLOT}.

\begin{thm}[Ganea, Schwarz]\label{ganea}
\label{t:ganea}
For a~CW-space~$X$,~$\cat(X)\le n$ if and only if there exists a
section of~$p_n^X:G_n(X)\to X$.
\end{thm}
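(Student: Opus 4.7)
The plan is to derive the theorem from Schwarz's characterisation of the sectional category (genus) of a fibration, applied to the Serre path fibration $p_0^X\colon PX\to X$. The starting observation is that an open set $U\subset X$ is $X$-contractible if and only if $p_0^X$ admits a section over $U$: a lift $\sigma\colon U\to PX$ of the inclusion $U\hookrightarrow X$ provides for each $x\in U$ a path from the basepoint $x_0$ to $x$, and the assignment $x\mapsto \sigma(x)$ is precisely a null-homotopy of the inclusion; conversely, a null-homotopy yields such a lift by the exponential law. Consequently $\cat X\le n$ is equivalent to the existence of an open cover $X=U_0\cup\cdots\cup U_n$ over each of which $p_0^X$ has a section.

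For the forward implication $(\Rightarrow)$, given local sections $\sigma_i\colon U_i\to PX$ and a partition of unity $\{\varphi_i\}$ subordinate to $\{U_i\}$ (available because $X$ is a CW space, hence paracompact), I would assemble a global section $s\colon X\to G_n(X)$ of the iterated fibrewise join $p_n^X$ by sending $x\in X$ to the formal join $[\varphi_0(x)\sigma_0(x),\ldots,\varphi_n(x)\sigma_n(x)]$, with the convention that summands of weight zero are collapsed. Since $\operatorname{supp}\varphi_i\subset U_i$, the entry $\sigma_i(x)$ is defined wherever its weight is positive, and at least one weight is positive at every $x$, so the formula lies in the fibre $\Omega X\ast\cdots\ast\Omega X$ of $p_n^X$ over $x$. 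Continuity is built into the defining identifications of the join.

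For the reverse implication $(\Leftarrow)$, a section $s\colon X\to G_n(X)$ carries, by the recursive structure of the iterated join, continuous barycentric coordinates $t_i\colon G_n(X)\to[0,1]$ with $\sum_i t_i=1$, together with partial projections to $PX$ defined wherever $t_i>0$. The sets $U_i=\{x\in X\mid t_i(s(x))>0\}$ form an open cover of $X$ by $n+1$ sets, and on each $U_i$ the section $s$ yields a section of $p_0^X$, so each $U_i$ is $X$-contractible by the first observation, giving $\cat X\le n$.

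The main obstacle is the careful bookkeeping with the iterated fibrewise join: one must verify that the join formula in the forward direction extends continuously across the strata where individual $\varphi_i$ vanish (so that the irrelevant $\sigma_i(x)$ values collapse away), and dually that the barycentric-coordinate projections $t_i$ and the partial path-projections in the reverse direction are continuous on $G_n(X)$. Both points hinge on unpacking the recursive definition of $G_n(X)$ from the two-input fibrewise join $\ast_X$, and this is where the CW hypothesis (paracompactness and homotopy extension) is essential.
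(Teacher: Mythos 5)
Your argument is correct and is precisely the classical Schwarz/Ganea proof; the paper offers no proof of this statement but refers to \cite{CLOT}, where essentially the same argument appears (local sections of $p_0^X$ over a categorical cover assembled via a partition of unity into a section of the iterated fibrewise join, and conversely the open cover by positivity sets of the barycentric coordinates). The only point worth making explicit is that identifying $X$-contractibility of $U$ with a section of $p_0^X$ over $U$ uses path-connectedness of $X$ to move the terminal point of the null-homotopy to the basepoint $x_0$.
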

This theorem can be extended to maps:
\begin{thm}\label{ganea-maps}
\label{t:ganea}
For a map $f:Y\to X$ to a CW-space~$X$,~$\cat(f)\le n$ if and only if there exists a
lift of $f$ with respect to $p_n^X:G_n(X)\to X$.
\end{thm}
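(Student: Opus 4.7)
The plan is to reduce this parametrized statement to the unparametrized Ganea--Schwarz theorem \theoref{ganea} by pulling back the path fibration along $f$. First I would form the pullback $f^*p_0^X\colon f^*PX\to Y$ of the Serre path fibration. A point of $f^*PX$ is a pair $(y,\alpha)$ with $\alpha(0)=x_0$ and $\alpha(1)=f(y)$, so giving a section of $f^*PX\to Y$ over an open set $V\subset Y$ is the same as giving a null-homotopy of $f|_V$ to the basepoint in $X$. In particular, the Schwarz sectional category of $f^*p_0^X$ equals $\cat(f)$.

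Next I would observe that the iterated fiberwise join construction commutes with pullback: the pulled-back space $f^*G_n(X)\to Y$ is naturally the $(n+1)$-fold fiberwise join over $Y$ of $f^*PX\to Y$, and under this identification a lift $Y\to G_n(X)$ of $f$ through $p_n^X$ is the same data as a section of $f^*p_n^X$.

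With these two reductions in hand, the conclusion is immediate from Schwarz's iterated-join theorem, which is the geometric content of \theoref{ganea}: for any fibration $p\colon E\to B$, the sectional category of $p$ is at most $n$ if and only if its $(n+1)$-fold fiberwise join over $B$ admits a section. Applied to $f^*PX\to Y$, this gives $\cat(f)\le n$ if and only if $f^*p_n^X$ has a section, if and only if $f$ lifts through $p_n^X$.

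The main obstacle is a careful verification that pullback commutes with iterated fiberwise join and the tracking of basepoints along the way; these are routine but notation-heavy. If that route feels clumsy, I would instead imitate the direct proof of \theoref{ganea}: given a cover $Y=V_0\cup\cdots\cup V_n$ with each $f|_{V_i}$ null-homotopic, I would lift $f|_{V_i}$ to a map $s_i\colon V_i\to PX$ with $p_0^X\circ s_i=f|_{V_i}$ and glue the $s_i$ into a lift $Y\to G_n(X)$ of $f$ using a partition of unity in the join coordinate, exactly as in the classical argument. The converse direction is easier: $G_n(X)$ carries a canonical open cover by $n+1$ sets each of which projects through the contractible space $PX$ to $X$, and pulling this cover back under a lift yields $n+1$ null-homotopies of $f$ on the corresponding open sets of $Y$.
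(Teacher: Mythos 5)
Your argument is correct, and the paper offers no proof of this statement to compare against: it is stated as a known extension of Theorem~\ref{ganea}, whose proof is deferred to~\cite{CLOT}, and your reduction --- identifying $\cat(f)$ with the Schwarz sectional category of the pullback $f^*p_0^X\colon f^*PX\to Y$, noting that fiberwise join commutes with pullback so that $f^*G_n(X)$ is the $(n+1)$-fold join of $f^*PX$ over $Y$, and then invoking Schwarz's join criterion --- is exactly the standard route. The one hypothesis worth making explicit is that $Y$ must be paracompact (or a CW complex) for the partition-of-unity gluing in the ``only if'' direction; this is implicit in the paper's setting.
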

We recall that the LS-category of a map $f:Y\to X$ is a minimal number $k$ such that $Y$ can be covered by
$k+1$ open sets $U_0,\dots, U_k$ such that the restrictions $f|_{U_i}$ are null-homotopic for all $i$.

\

We use the notation
$\pi_*(f)=\pi_*(M_f,X)$ where $M_f$ is the mapping cylinder of~$f:X\to Y$.  Then~$\pi_i(f)=0$ for~$i\le n$ amounts to
say that it induces isomorphisms~$f_*:\pi_i(X)\to \pi_i(Y)$ for
$i\le n$ and an epimorphism in dimension~$n+1$. 
The following was proven in~\cite{DKR}.

\begin{prop}\label{join}
Let~$f_j:X_j\to Y_j$,~$3\le j\le s$ be a family of maps of~CW-spaces such
that~$\pi_i(f_j)=0$ for~$i\le n_j$.  Then the joins
satisfy
\[
\pi_k(f_1\ast f_2\ast\dots\ast f_s)=0
\]
for~$k\le\min\{n_j\}+s-1$.
\end{prop}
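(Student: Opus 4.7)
My plan is to convert the join of maps into a suspension of a smash product, where connectivity is easy to track. For based well-pointed CW complexes one has the classical natural equivalence $X_1 * X_2 \simeq \Sigma(X_1 \wedge X_2)$, and iterating gives
\[
X_1 * \cdots * X_s \simeq \Sigma^{s-1}(X_1 \wedge \cdots \wedge X_s),
\]
with $f_1 * \cdots * f_s$ corresponding naturally to $\Sigma^{s-1}(f_1 \wedge \cdots \wedge f_s)$. This reduces the claim to estimating the connectivity of the suspended smash.

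The argument then rests on two standard facts. First, smashing with any based CW complex $Z$ preserves the connectivity of a map: since $-\wedge Z$ preserves cofibers, $C_{\id_Z \wedge g} \simeq Z \wedge C_g$, and $Z \wedge C_g$ is at least $n$-connected whenever $C_g$ is (by a cellular counting argument, since an $n$-connected CW factor may be replaced by one with no positive-dimensional cells in dimensions $\le n$, so every non-basepoint cell of the smash has dimension $\ge n+1$). Second, suspension raises connectivity by one: $C_{\Sigma h} \simeq \Sigma C_h$ is $(n+1)$-connected when $C_h$ is $n$-connected.

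With these in hand, I would factor $f_1 \wedge \cdots \wedge f_s$ as the composition
\[
X_1 \wedge \cdots \wedge X_s \to Y_1 \wedge X_2 \wedge \cdots \wedge X_s \to \cdots \to Y_1 \wedge \cdots \wedge Y_s,
\]
where the $j$-th arrow is $\id \wedge \cdots \wedge f_j \wedge \cdots \wedge \id$. By the first fact each such arrow is $n_j$-connected, so the whole composition is at least $(\min_j n_j)$-connected. Applying suspension $s-1$ times then produces a map of connectivity at least $\min_j n_j + s - 1$, giving the desired vanishing of $\pi_k(f_1 * \cdots * f_s)$ in the claimed range.

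The main obstacle I anticipate is the careful setup of the join--suspension equivalence and its naturality: the join is not intrinsically a based construction, so one has to choose compatible basepoints $x_j \in X_j$ with $f_j(x_j) \in Y_j$ serving as basepoints in $Y_j$, verify well-pointedness, and check that the unbased map $f_1 * f_2$ is genuinely identified with the based map $\Sigma(f_1 \wedge f_2)$ under the standard equivalence $X * Y \simeq \Sigma(X \wedge Y)$. Once that bookkeeping is in place, the connectivity computation is essentially formal; note that a naive induction on $s$ using only the $s=2$ case loses all but one unit of the accumulated suspension gain, so it is essential to treat the $s$-fold smash and the $(s-1)$-fold suspension together rather than iteratively.
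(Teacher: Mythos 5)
Your reduction to $\Sigma^{s-1}(f_1\wedge\dots\wedge f_s)$ is a legitimate strategy, but as written there is a genuine gap at the very last step. Everything you actually justify lives at the level of mapping cones: your ``first fact'' is proved via $C_{\id_Z\wedge g}\simeq Z\wedge C_g$ and cell counting, so what you obtain is that the cofiber of $f_1\ast\dots\ast f_s$ is $(\min_j n_j+s-1)$-connected. The proposition, however, asserts the vanishing of $\pi_k(f_1\ast\dots\ast f_s)=\pi_k(M_{f_1\ast\dots\ast f_s},\,X_1\ast\dots\ast X_s)$, the relative homotopy groups of the mapping cylinder pair, and connectivity of the cofiber does \emph{not} in general imply connectivity of the map (acyclic maps, e.g.\ a plus construction, are the standard counterexample). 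The sentence ``produces a map of connectivity at least $\min_j n_j+s-1$, giving the desired vanishing of $\pi_k$'' silently crosses this divide. The gap is bridgeable: the source $X_1\ast\dots\ast X_s\simeq\Sigma^{s-1}(X_1\wedge\dots\wedge X_s)$ is $(s-2)$-connected, so for $s\ge 3$ the pair is simply connected and the relative Hurewicz theorem converts $\tilde H_k(\mathrm{cofiber})=0$ into $\pi_k(\mathrm{pair})=0$ in the same range; you also need the (omitted) converse observation that $\pi_i(f_j)=0$ for $i\le n_j$ makes $C_{f_j}$ an $n_j$-connected \emph{space}, which again requires replacing $(M_{f_j},X_j)$ by a CW pair with relative cells in dimensions $>n_j$. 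So you must invoke relative Hurewicz explicitly and address small $s$ separately.

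For comparison, the proof in~\cite{DKR} that the paper cites never leaves the world of relative homotopy, and in doing so it also renders your suspension--smash detour unnecessary. One replaces each $f_j$, up to homotopy, by a cellular inclusion $X_j\hookrightarrow Y_j'$ whose relative cells all have dimension $>n_j$ (possible precisely because $\pi_i(f_j)=0$ for $i\le n_j$). A cell of $Y_1'\ast\dots\ast Y_s'$ not contained in $X_1\ast\dots\ast X_s$ has the form $e_1\ast\dots\ast e_s$ with some $e_j$ of dimension $\ge n_j+1$, hence has dimension at least $(n_j+1)+(s-1)\ge\min_j n_j+s$; cellular approximation of maps of pairs then gives $\pi_k=0$ for $k\le\min_j n_j+s-1$ directly, with no basepoint bookkeeping, no Hurewicz, and no restriction on $s$. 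Note that the cell count $\dim(e_1\ast\dots\ast e_s)=\sum\dim e_i+(s-1)$ is exactly the ``$+(s-1)$ from the $(s-1)$-fold suspension'' that you rightly insist must be harvested all at once rather than by iterating the $s=2$ case.
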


\subsection{The Berstein-Schwarz class.} Let $\pi$ be a discrete group and  $A$ be a $\pi$-module. By $H^*(\pi,A)$ we denote the cohomology of the group $\pi$ with coefficients in $A$ and by
$H^*(X;A)$ we denote the cohomology of a space $X$ with the twisted coefficients defined by $A$.
The Berstein-Schwarz class of a group $\pi$ is a certain cohomology class $\beta_{\pi}\in H^1(\pi,I(\pi))$ where $I(\pi)$ is the augmentation ideal of the group ring $\Z\pi$~\cite{Be},\cite{Sch}. 
It is defined as the first obstruction to a lift of $B\pi=K(\pi,1)$ to the universal covering $E\pi$. For a fixed set $S$
of generators of $\pi$, the 1-skeleton of $B\pi$ is the wedge of circles labeled by $S$  with $E\pi^{(1)}$ equals the Cayley graph $C$. The vertices of $C$ can be identified with $\pi$. Then $\beta_{\pi}$ is defined by a cocycle $\beta$ that takes every edge $[a,b]\subset C$ to $b-a\in I(\pi)$. 

Here is a more algebraic definition of $\beta_{\pi}$. Consider the  cohomology long exact sequence 
generated by the short exact sequence of coefficients
$$
0\to I(\pi)\to\Z\pi\stackrel{\epsilon}\to\Z\to 0
$$ where $\epsilon$ is the augmentation homomorphism.
Then $\beta_{\pi}=\delta(1)$ equals the image of the generator $1\in H^0(\pi;\Z)=\Z$ under the connecting homomorphism 
$\delta:H^0(\pi;\Z)\to H^1(\pi;I(\pi))$.

\begin{thm}[Universality~\cite{DR},\cite{Sch}]
For any cohomology class $\alpha\in H^k(\pi,L)$ there is a homomorphism of $\pi$-modules $I(\pi)^k\to L$ such that the induced homomorphism for cohomology takes $(\beta_{\pi})^k$ to $\alpha$  where $I(\pi)^k=I(\pi)\otimes\dots\otimes I(\pi)$ and $(\beta_{\pi})^k=\beta_{\pi}\smile\dots\smile\beta_{\pi}$.
\end{thm}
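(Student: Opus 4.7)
The plan is to construct an explicit $\Z\pi$-free resolution of $\Z$ whose $(k-1)$-st syzygy is canonically identified with $I(\pi)^{\otimes k}$ (with tensor products taken over $\Z$ and diagonal $\pi$-action), then verify that $\beta_\pi^k$ corresponds under this identification to the identity endomorphism of $I(\pi)^{\otimes k}$, and finally apply the standard description $H^k(\pi,L)\cong\mathrm{Hom}_{\Z\pi}(Z_{k-1},L)/\{\text{maps extending to }F_{k-1}\}$.

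For the resolution, set $F_k=\Z\pi\otimes_\Z I(\pi)^{\otimes k}$. Each $F_k$ is free over $\Z\pi$: since $I(\pi)\subset\Z\pi$ is a free abelian group, the untwisting isomorphism $g\otimes m\mapsto g\otimes g^{-1}m$ converts the diagonal action into an action on the first factor only. To build the differentials I tensor the defining short exact sequence $0\to I(\pi)\to\Z\pi\to\Z\to 0$ over $\Z$ with $I(\pi)^{\otimes(k-1)}$; by $\Z$-flatness the resulting sequence
$$0\to I(\pi)^{\otimes k}\to F_{k-1}\to I(\pi)^{\otimes(k-1)}\to 0$$
is exact. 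Splicing these together along the inclusions $I(\pi)^{\otimes k}\hookrightarrow F_{k-1}$ produces the desired free resolution, with syzygies $Z_{k-1}\cong I(\pi)^{\otimes k}$.

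Next I would identify $\beta_\pi^k$ with the identity syzygy map. Since $\beta_\pi=\delta(1)$ is built from the connecting homomorphism $\delta\colon H^0(\pi,\Z)\to H^1(\pi,I(\pi))$, and the cup product is compatible with connecting homomorphisms of tensored short exact sequences (valid here by $\Z$-flatness of each $I(\pi)^{\otimes j}$), the class $\beta_\pi^k$ is the image of $1\in H^0(\pi,\Z)$ under the composition of $k$ successive connecting homomorphisms associated to the sequences above. This composite is the Yoneda splice of the $k$ tensored short exact sequences, which is exactly the beginning of our resolution. Under the isomorphism $H^k(\pi,I(\pi)^{\otimes k})\cong\mathrm{Hom}_{\Z\pi}(Z_{k-1},I(\pi)^{\otimes k})/\mathrm{coboundaries}$, this class is represented by $\mathrm{id}_{I(\pi)^{\otimes k}}$.

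To conclude, given $\alpha\in H^k(\pi,L)$, represent it by a $\Z\pi$-homomorphism $\psi\colon I(\pi)^{\otimes k}=Z_{k-1}\to L$. Then $\psi_*(\beta_\pi^k)$ is represented by the cocycle $\psi\circ\mathrm{id}=\psi$, so $\psi_*(\beta_\pi^k)=\alpha$, as required. The main obstacle is the middle step: carefully matching the $k$-fold cup product with the iterated connecting homomorphism on the tensored sequences. This is a classical identification, but it is easy to botch the ordering of the tensor factors or lose track of a sign, so I would need to set up the Alexander--Whitney/Eilenberg--Zilber comparison explicitly before claiming that the cup product coincides with the Yoneda splice in this setting.
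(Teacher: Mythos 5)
Your argument is correct and is essentially the proof given in the cited references (the paper itself only quotes this theorem from \cite{DR} and \cite{Sch} without proof): one builds the free resolution $\cdots\to\Z\pi\otimes I(\pi)^{\otimes k}\to\cdots\to\Z\pi\to\Z\to 0$ by splicing the tensored augmentation sequences, identifies $\beta_\pi^k$ with the class of the identity map on the syzygy $I(\pi)^{\otimes k}$ via iterated connecting homomorphisms, and pushes forward along any representing homomorphism $\psi:I(\pi)^{\otimes k}\to L$. The step you flag as delicate --- matching the $k$-fold cup product with the Yoneda splice --- is handled in \cite{DR} exactly as you indicate, using $\delta(1\smile\beta_\pi^{k-1})=\delta(1)\smile\beta_\pi^{k-1}$ for the $\Z$-flat tensored sequences, so no further comparison map is needed.
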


\begin{cor}\label{cd} 
For any group $\pi$ its cohomological dimension can be expressed as follows:
$$
cd(\pi)=\max\{n\mid (\beta_{\pi})^n\ne 0\}.
$$
\end{cor}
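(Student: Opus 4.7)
The plan is to deduce the corollary directly from the Universality Theorem stated just above, together with the definition of cohomological dimension as $cd(\pi)=\max\{n\mid H^n(\pi,L)\ne 0 \text{ for some }\pi\text{-module }L\}$. Set $N=\max\{n\mid (\beta_\pi)^n\ne 0\}$; I will prove $cd(\pi)\ge N$ and $cd(\pi)\le N$ separately.

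For the inequality $cd(\pi)\ge N$, I would note that $(\beta_\pi)^N$ is by construction a nonzero element of $H^N(\pi,I(\pi)^{\otimes N})$. Since $I(\pi)^{\otimes N}$ is a legitimate $\pi$-module, the existence of a nonzero class in degree $N$ forces $cd(\pi)\ge N$ immediately from the definition.

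For the reverse inequality $cd(\pi)\le N$, I would argue contrapositively: suppose $d=cd(\pi)$ and pick any nonzero class $\alpha\in H^d(\pi,L)$ for some $\pi$-module $L$. By the Universality Theorem, there exists a $\pi$-module homomorphism $\varphi:I(\pi)^{\otimes d}\to L$ whose induced map $\varphi_*:H^d(\pi,I(\pi)^{\otimes d})\to H^d(\pi,L)$ sends $(\beta_\pi)^d$ to $\alpha$. Since $\alpha\ne 0$ and $\varphi_*$ is a homomorphism, we must have $(\beta_\pi)^d\ne 0$, hence $d\le N$.

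Combining the two inequalities yields $cd(\pi)=N$, as claimed. There is really no obstacle here: the entire content is packaged in the Universality Theorem, so once that is invoked the argument reduces to a one-line application of naturality of cup products (to see that $(\beta_\pi)^d=0$ would force $\alpha=\varphi_*((\beta_\pi)^d)=0$). The only subtle point worth mentioning explicitly is that one must allow arbitrary $\pi$-modules as coefficients in the definition of $cd(\pi)$ for the Universality Theorem to apply, which is of course the standard convention.
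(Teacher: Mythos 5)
Your argument is correct and is precisely the intended deduction: the paper states this as an immediate corollary of the Universality Theorem and gives no separate proof, and your two inequalities --- $cd(\pi)\ge N$ because $(\beta_\pi)^N$ is itself a nonzero class in $H^N(\pi,I(\pi)^{\otimes N})$, and $cd(\pi)\le N$ because any nonzero $\alpha\in H^d(\pi,L)$ is the image of $(\beta_\pi)^d$ under a coefficient homomorphism, forcing $(\beta_\pi)^d\ne 0$ --- are exactly the standard argument. No gaps.
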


\begin{cor}[\cite{Sch}]\label{ganea-obstr}
The class  $(\beta_{\pi})^{n+1}$ is the primary obstruction to a section of $p^{B\pi}_n:G_n(\pi)\to B\pi$.
\end{cor}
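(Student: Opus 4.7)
The plan is to identify $(\beta_{\pi})^{n+1}$ with the primary obstruction class to a section of $p_n^{B\pi}$ by first pinning down the twisted coefficient module in which the obstruction lives, and then running an inductive cup-product argument along the iterated fiberwise join.

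First I would determine the coefficient module. The fiber $F_n$ is the join of $n+1$ copies of $\Omega B\pi \simeq \pi$, so $F_n \simeq \pi^{\ast(n+1)}$. Iterated use of $X \ast Y \simeq \Sigma(X \wedge Y)$ gives $F_n \simeq \Sigma^n(\pi^{\wedge(n+1)})$, which is $(n-1)$-connected. By Hurewicz together with the Künneth identification $\tilde H_{*}(\pi^{\wedge(n+1)}) \cong \tilde H_{*}(\pi)^{\otimes(n+1)}$, one obtains $\pi_n(F_n) \cong \tilde H_0(\pi)^{\otimes(n+1)} \cong I(\pi)^{\otimes(n+1)}$ as $\pi$-modules. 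Consequently the primary obstruction lives in $H^{n+1}(\pi;\, I(\pi)^{\otimes(n+1)})$, precisely the group containing $(\beta_{\pi})^{n+1}$.

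Next I would induct on $n$. The base case $n=0$ is immediate: $p_0^{B\pi}\colon PB\pi \to B\pi$ is equivalent to the universal cover $E\pi \to B\pi$ (both have contractible total space and fiber $\pi$), so the primary obstruction to a section equals $\beta_{\pi}$ by the very definition recalled in the excerpt. For the inductive step, express $p_n^{B\pi}$ as the fiberwise join $p_{n-1}^{B\pi} \ast_{B\pi} p_0^{B\pi}$ and invoke Schwarz's multiplicativity principle for primary obstructions of fiberwise joins: if fibrations $\xi_i$ over $B\pi$ have primary obstructions $o_i \in H^{n_i+1}(\pi;\, \pi_{n_i}(F_i))$, then $\xi_1 \ast_{B\pi} \xi_2$ has primary obstruction $o_1 \smile o_2$ in $H^{n_1+n_2+2}(\pi;\, \pi_{n_1+n_2+1}(F_1 \ast F_2))$, where the coefficient pairing is the one induced on homotopy groups by the natural join/Künneth map. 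The inductive hypothesis then yields $(\beta_{\pi})^n \smile \beta_{\pi} = (\beta_{\pi})^{n+1}$ as claimed.

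The main obstacle is verifying that the two coefficient pairings match. One must check that under the Hurewicz isomorphism $\pi_n(F_n) \cong I(\pi)^{\otimes(n+1)}$, the homotopy-level pairing induced by the join coincides with the tensor multiplication of augmentation ideals used implicitly in forming $(\beta_{\pi})^{n+1}$ via the universality theorem. Once this compatibility is pinned down on the level of obstruction cocycles, the conclusion follows because the universality theorem identifies $(\beta_{\pi})^{n+1}$ as the canonical class corresponding to the identity homomorphism $I(\pi)^{\otimes(n+1)} \to I(\pi)^{\otimes(n+1)}$.
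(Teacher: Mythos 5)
Your argument is correct and is essentially the classical one: the paper offers no proof of its own here, simply citing Schwarz, and your route --- identifying $\pi_n(F_n)\cong I(\pi)^{\otimes(n+1)}$ via $F_n\simeq \Sigma^n(\pi^{\wedge(n+1)})$ and Hurewicz, anchoring the base case at the path fibration where the primary obstruction is $\beta_\pi$ by definition, and then applying Schwarz's cup-product theorem for primary obstructions of fiberwise joins --- is exactly the argument in the cited source. The only caveat is that the inductive step delegates the real work to Schwarz's join--cup-product theorem and the compatibility of the join pairing with tensor multiplication on augmentation ideals, which you correctly flag but do not verify; that is acceptable given the corollary is itself attributed to Schwarz.
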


\section{Some examples of degree one maps}

Let $M$ be an oriented manifold and $k\in\Z\setminus\{0\}$; by $kM$ we denote the connected sum $M\#\cdots\# M$ of $|k|$ copies of
$M$ taken with the opposite orientation if $k$ is negative. For an odd $n$ and natural $p>1$ we denote by $L^{n}_p$ a lens space, i.e. the orbit space
$S^{n}/\Z_p$ for a free linear action of $\Z_p=\Z/p\Z$ on the sphere $S^{n}$.

\begin{thm}\label{ex}
For $m,n\in 2\N+1$ and any  relatively prime numbers $p$ and $q$ there are $k,l\in\Z$ such that the manifold
$$
M=k(L^{m}_p\times S^{n})\# l(S^{m}\times L^{n}_q)
$$
admits a degree one map $\phi:M\to N$ onto $N=L^{m}_p\times L^{n}_q$.
\end{thm}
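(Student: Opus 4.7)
The plan is to express the given connected sum as the domain of a map whose degree is an integer linear combination of $p$ and $q$, and then invoke B\'ezout. Since $m$ and $n$ are odd, both $L^m_p$ and $L^n_q$ are orientable, and the universal covering projections $\pi_p:S^m\to L^m_p$ and $\pi_q:S^n\to L^n_q$ are orientable covers of degree $p$ and $q$ respectively. Consequently the product maps
\[
f=\id\times\pi_q: L^m_p\times S^n\to L^m_p\times L^n_q,\qquad g=\pi_p\times\id:S^m\times L^n_q\to L^m_p\times L^n_q
\]
have degrees $q$ and $p$.

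Next I would invoke the standard \emph{pinch} construction for degrees under connected sum. Given oriented maps $h_1:M_1\to N$ and $h_2:M_2\to N$ between closed oriented $d$-manifolds, one first homotopes each $h_i$ so that it sends a small closed ball $D_i\subset M_i$ into a single point of $N$ disjoint from some chosen regular value. Forming the connected sum by gluing along $\partial D_i$ then yields a well-defined map $h_1\# h_2:M_1\# M_2\to N$ factoring (up to homotopy) through the pinch $M_1\#M_2\to M_1\vee M_2$, and one checks from the definition of degree that $\deg(h_1\# h_2)=\deg(h_1)+\deg(h_2)$. Reversing the orientation on a summand negates its contribution, which matches the convention for $kM$ with $k<0$ set up before the theorem. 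Iterating this construction with $k$ copies of $f$ and $l$ copies of $g$ produces a map
\[
\phi:M=k(L^m_p\times S^n)\# l(S^m\times L^n_q)\longrightarrow N=L^m_p\times L^n_q
\]
of degree $\deg\phi=kq+lp$ for every $k,l\in\Z$.

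Finally, because $(p,q)=1$, B\'ezout's identity supplies integers $k,l$ with $kq+lp=1$, and the corresponding $\phi$ is the desired degree one map. I expect the only point requiring care to be the pinch construction: one must arrange the summand maps so that the disks used to perform the connected sum all avoid a chosen regular value in $N$, so that the local degree computation over that value is unaffected by the surgery and the degrees genuinely add. This is routine once stated, but deserves explicit mention because the theorem iterates the construction $|k|+|l|$ times at once. Everything else reduces to the linear Diophantine equation $kq+lp=1$.
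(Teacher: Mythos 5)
Your proposal is correct and follows essentially the same route as the paper: the paper also takes the covering projections $S^m\to L^m_p$ and $S^n\to L^n_q$, crosses them with identities to get maps of degrees $p$ and $q$, collapses the connecting spheres of the connected sum to a wedge (your ``pinch''), and concludes $\deg\phi=kq+lp=1$ by B\'ezout. No substantive difference.
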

\begin{proof}
Take $k$ and $l$ such that $lp+kq=1$. Let $f:S^{m}\to L^{m}_p$ and $g:S^{n}\to L^{n}_q$
be the projections to the orbit space for $\Z_p$ and $\Z_q$  free actions respectively.
We may assume that the above connected sum is obtained by taking the wedge of $(|k|+|l|-1)$-spheres of dimension $(2n-1)$ embedded in one of the summands and gluing all other summands along those spheres.
Consider the quotient map
$$
\psi:k(L^{m}_p\times S^{n})\# l(S^{m}\times L^{n}_q)\to \bigvee_k(L^{m}_p\times S^{n})\vee \bigvee_l(S^{m}\times L^{n}_q)
$$
that collapses the wedge of those $(2n-1)$-spheres  to a point.
Let the map $$\phi:\bigvee_k(L^{m}_p\times S^{n})\vee \bigvee_l(S^{m}\times L^{n}_q)\to L^{m}_p\times L^{n}_q$$
be defined as the union 
$$
\phi=\bigcup_l(1\times g)\cup\bigcup_k(f\times 1).
$$
Note that the degree of $f\times 1$ is $p$, the degree of $1\times g$ is $q$, and the degree of
$\phi\circ\psi$ is $lp+kq=1$.
\end{proof}

\begin{prop} For $m\le n$,
$\cat(k(L^{m}_p\times S^{n})\# l(S^{m}\times L^{n}_q))=n+1.$
\end{prop}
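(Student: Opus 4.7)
The plan is to prove both $\cat M\le n+1$ and $\cat M\ge n+1$ using tools from the excerpt.

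For the upper bound, I would iterate the connected-sum inequality $\cat(A\#B)\le\max\{\cat A,\cat B\}$ (the last theorem stated) across all $|k|+|l|$ summands of $M$. The standard product inequality for the LS-category gives $\cat(L^m_p\times S^n)\le \cat L^m_p+\cat S^n=m+1\le n+1$ and $\cat(S^m\times L^n_q)\le 1+n=n+1$, so $\cat M\le n+1$.

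For the lower bound I would show $c.l.(M)\ge n+1$ and invoke \theoref{cl}. Since $lp+kq=1$ with $p,q\ge 2$ forces $l\ne 0$, $M$ contains a summand $N=S^m\times L^n_q$. On $N$, the Berstein--Schwarz class $\beta\in H^1(\Z_q;I(\Z_q))$ pulls back via $N\to L^n_q\to B\Z_q$ to a class whose $n$-th power is nonzero on $L^n_q$ (the classical fact underlying $\cat L^n_q=n$; see \coryref{cd} together with the identification of $L^n_q$ as the $n$-skeleton of $B\Z_q$). Cupping with the pulled-back generator $\sigma\in H^m(S^m;\Z)$ gives a nonzero element $\beta^n\cup\sigma\in H^{m+n}(N;I(\Z_q)^n)$, which K\"unneth identifies with the twisted top cohomology class of $N$.

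Collapsing the complement of a small open ball inside the chosen $N$-summand to a point produces a degree-one map $c\colon M\to N$, and by naturality of cup products $c^*(\beta^n\cup\sigma)=(c^*\beta)^n\cup c^*\sigma$ is a product of $n+1$ positive-dimensional classes on $M$. The main technical step is to verify this pullback is nonzero: because $c$ has degree one and $c_*\colon\pi_1(M)\to\Z_q$ is surjective (the collapse kills all other free-product factors of $\pi_1(M)$), Poincar\'e duality identifies both top twisted cohomology groups with the coinvariants of $I(\Z_q)^n$ under $\Z_q$ and forces $c^*$ to send the twisted fundamental class of $N$ to that of $M$. Thus $c.l.(M)\ge n+1$, and combined with the upper bound this yields $\cat M=n+1$. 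The only genuine obstacle is precisely this Poincar\'e-duality verification in twisted coefficients under the degree-one collapse; the rest is assembly.
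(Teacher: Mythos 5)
Your argument is correct and follows essentially the same route as the paper: the lower bound via the cup-length estimate (Theorem~\ref{cl}) applied to the summand $S^m\times L^n_q$ (which exists since $l\ne 0$), and the upper bound via the product inequality together with the connected-sum formula of Theorem~\ref{th1}. The only difference is that you spell out the degree-one collapse and twisted Poincar\'e duality argument behind the assertion that the cup-length of the connected sum is at least that of a summand, a standard fact the paper simply cites; your verification of it is sound.
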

\begin{proof}
It follows from the cup-length estimate that $\cat(S^m\times L^n_q)\ge n+1$ and generally,
$\cat(k(L^{m}_p\times S^{n})\# l(S^{m}\times L^{n}_q))\ge n+1$ when $l\ne 0$.
By the product formula, $\cat(S^m\times L^n_r)\le n+1$. Thus, $\cat(S^m\times L^n_r)= n+1$.
Then by the sum formula~\cite{N} (see Theorem~\ref{th1}), $$\cat(k(L^{m}_p\times S^{n})\# l(S^{m}\times L^{n}_q))\le n+1.$$
\end{proof}

Now one can see the connection between Rudyak's conjecture and Problem~\ref{qu}. If there exist relatively prime $p$ and $q$ and odd $n$ such that $\cat(L^n_p\times L^n_q)>n+1$, then the map of Theorem~\ref{ex} will be a counter-example to Rudyak's conjecture.

\begin{rem}
In Theorem~\ref{ex} one can use fake lens spaces. Since every fake lens space is homotopy equivalent to a lens space~\cite{Wa} and the LS-category is a homotopy invariant, it suffices to consider only the classical lens spaces.
\end{rem}

\section{On the category of the product of lens spaces}
Let $\bar\ell=(\ell_1,\dots,\ell_k)$ be a set of  mod $p$ integers relatively prime to $p$. The lens space $L^{2k-1}_p(\bar\ell)$ is the orbit space
of the action of $\Z_p=\langle t\rangle$ on the unit sphere $S^{2k-1}\subset\C^k$ defined by the formula:
$$
t(z_1,\dots, z_k)=(e^{2\pi i\ell_1/p}z_1,\dots, e^{2pi i\ell_k/p}).$$
We note that for all $k$ the lens spaces $L^{2k-1}_p(\bar\ell)$ have a natural CW-complex structure with one cell in each dimension $\le 2k-1$ such that
$L^{2k-1}_p(\bar\ell)$ is the $(2k-1)$-skeleton of $L^{2k+1}_p(\bar\ell,\ell_{k+1})$.
If $\alpha:\Z_p\times S^{2k-1}\to S^{2k-1}$ is a free action which is not necessarily linear, its orbit space is called a {\em fake lens space}
and is denoted by $L^{2k-1}_p(\alpha)$.

We recall that a closed oriented $n$-manifold $M$ is called {\em inessential}~\cite{Gr}
if a map $u:M\to B\pi=K(\pi,1)$ that classifies its universal cover can be deformed to the $(n-1)$-dimensional skeleton $B\pi^{(n-1)}$.
It is known that a closed oriented $n$-manifold $M$ is essential if and only if $u_*([M])\ne 0$ where $[M]\in H_n(M;\Z)$ denotes the fundamental class~\cite{Ba},\cite{BD}. 

We note that $\cat M=\dim M$ if and only if $M$ is essential~\cite{KR}.
Clearly, every lens space $L^{n}_p$ is essential. In particular, $\cat L^{n}_p=n$. Since $\Z_p\otimes\Z_q=0$ for relatively prime $p$ and $q$,
the product $ L^{m}_p\times L^{n}_q$ is inessential. Hence, $\cat(L^{m}_p\times L^{n}_q)\le m+n-1$
for all $p$ and $q$.

\subsection{Stably parallelizable lens spaces} First we make our computation for stably parallelizable lens spaces.
\begin{prop}\label{parallel}
For  lens spaces $L^m_p$ and $L^n_q$, $m\le n$, with $(p,q)=1$ which are homotopy equivalent to stably parallelizable manifolds,
$$\cat(L^m_p\times L^n_q)=n+1.$$
\end{prop}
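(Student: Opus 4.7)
The plan is to extract the lower bound from a Berstein--Schwarz cup-length estimate and to obtain the upper bound by transporting the category of the connected-sum model of the preceding proposition back along the degree-one map constructed in Theorem~\ref{ex}, using Rudyak's Theorem~\ref{rud} as the rigidity input.

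For the lower bound, note that $(p,q)=1$ gives $\pi:=\pi_1(L^m_p\times L^n_q)=\Z_p\times\Z_q$. Let $\beta_p\in H^1(L^m_p;I(\Z_p))$ and $\beta_q\in H^1(L^n_q;I(\Z_q))$ be the Berstein--Schwarz classes of the factors, pulled back to the product along the two coordinate projections and viewed as classes with coefficients in $\pi$-modules via the quotients $\pi\twoheadrightarrow\Z_p,\Z_q$. Since $\cat L^n_q=n$, the primary Ganea obstruction $\beta_q^n$ is non-zero on $L^n_q$, and clearly $\beta_p\neq 0$ on $L^m_p$; the twisted Künneth cross product then forces
$$\beta_p\smile\beta_q^n\;\neq\;0\quad\text{in}\quad H^{n+1}\bigl(L^m_p\times L^n_q;\,I(\Z_p)\otimes I(\Z_q)^{\otimes n}\bigr),$$
so $c.l.(L^m_p\times L^n_q)\ge n+1$ and hence $\cat(L^m_p\times L^n_q)\ge n+1$ by Theorem~\ref{cl}.

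For the upper bound, set $N=L^m_p\times L^n_q$ and choose $k,l\in\Z$ with $kq+lp=1$. Theorem~\ref{ex} then supplies a degree-one map $\phi:M\to N$ with $M=k(L^m_p\times S^n)\#\,l(S^m\times L^n_q)$, and the preceding proposition gives $\cat M=n+1$. Both $M$ and $N$ are stably parallelizable: by hypothesis each lens-space factor is stably parallelizable up to homotopy and category is a homotopy invariant, spheres are stably parallelizable, and stable parallelizability passes to products and connected sums. Assume $\dim N=m+n\ge 4$. If $\cat N\ge n+2$, then since $m\le n$,
$$2\cat N\;\ge\;2n+4\;\ge\;m+n+4\;=\;\dim N+4,$$
so Rudyak's Theorem~\ref{rud} applies to the degree-one map $\phi$ and yields $\cat M\ge\cat N\ge n+2$, contradicting $\cat M=n+1$. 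Hence $\cat N\le n+1$.

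The only case excluded by $m+n\ge 4$ is $m=n=1$, where both factors are circles and $\cat(S^1\times S^1)=2=n+1$ is classical. The main obstacle is the twisted-coefficient Künneth step -- one must install the correct $\pi$-module structures on the coefficient system $I(\Z_p)\otimes I(\Z_q)^{\otimes n}$ and verify the non-vanishing of $\beta_q^n$ on $L^n_q$; the upper bound argument is then a formal consequence of the rigidity theorem, the essential point being that all manifolds involved (including the connected-sum model $M$) are stably parallelizable, which is precisely where the hypothesis of the proposition is used.
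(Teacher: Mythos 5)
Your proposal is correct and takes essentially the same route as the paper: lower bound from the cup-length estimate, upper bound by applying Rudyak's Theorem~\ref{rud} to the degree-one map of Theorem~\ref{ex} and deriving a contradiction from $2\cat N\ge m+n+4$ (you even add the $m=n=1$ edge case that the paper leaves implicit). One small imprecision: $M$ and $N$ themselves need not be stably parallelizable, only homotopy equivalent to stably parallelizable manifolds $M'$, $N'$, so as in the paper you must apply Theorem~\ref{rud} to the composite $h'\circ\phi\circ h:M'\to N'$ of the degree-one map with the homotopy equivalences, which is still of degree $\pm1$.
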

\begin{proof} Let $$\phi:M=k(L^{m}_p\times S^{n})\# l(S^{m}\times L^{n}_q)\to N=L^m_p\times L_q^n$$ be the map of degree one from Theorem~\ref{ex}.
Suppose that $L^m_p$ and $L^n_q$ are homotopy equivalent to stably parallelizable manifolds $N_p^m$ and $N_q^n$ respectively.
Then there are homotopy equivalences $h:M'=k(N^{m}_p\times S^{n})\# l(S^{m}\times N^{n}_q)\to M$
and $h':N=L^n_q\to N'=N_p^m\times N_q^n$. Since a connected sum and the product of stably parallelizable manifolds
are stably parallelizable  (see for example~\cite{Ko}), the manifolds $M'$ and $N'$ are stably parallelizable.
Assume that $\cat(L^m_p\times L^n_q)\ge n+2$. Then $$2\cat N'=2\cat(L^m_p\times L^n_q)=2(n+2)\ge m+n+4=\dim(L^m_p\times L^n_q)+4.$$
By Theorem~\ref{rud} applied to the map $h'\circ\phi\circ h:M'\to N'$ from Theorem~\ref{ex} we obtain  a contradiction: $$n+2=\cat N=\cat N'\le\cat M'=\cat M=n+1.$$
\end{proof}
Since all orientable 3-manifolds are stably parallelizable, we obtain
\begin{cor}\label{3} For relatively prime $p$ and $q$.
$$\cat(L^3_p\times L^3_q)=4.$$
\end{cor}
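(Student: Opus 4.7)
The plan is to obtain the corollary as an immediate consequence of \propref{parallel} applied in the case $m=n=3$. To do so, I need only verify that the hypothesis of \propref{parallel} is automatically satisfied for the lens spaces $L^3_p$ and $L^3_q$, namely that they are (homotopy equivalent to) stably parallelizable manifolds.

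First, I would recall the classical fact, due essentially to Stiefel, that every closed orientable $3$-manifold is parallelizable. A convenient route: the tangent bundle of an orientable $3$-manifold is classified by a map to $BSO(3)$, and since $\pi_1(SO(3))=\Z/2$ and $\pi_2(SO(3))=0$, the obstructions to a trivialization lie in $H^2(M;\Z/2)$ and $H^3(M;\pi_2(SO(3)))=0$; the remaining obstruction is just the second Stiefel–Whitney class $w_2(M)$, which vanishes for orientable $3$-manifolds by Wu's formula. Since lens spaces $L^3_p$ and $L^3_q$ are closed orientable $3$-manifolds, they are parallelizable, hence a fortiori stably parallelizable.

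Then I would apply \propref{parallel} with $m=n=3$ to the pair $(L^3_p,L^3_q)$ (using that $(p,q)=1$), obtaining $\cat(L^3_p\times L^3_q)=n+1=4$.

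There is really no substantial obstacle here; the entire content is packaged in \propref{parallel} together with the classical parallelizability of orientable $3$-manifolds. The only thing to be careful about is the symmetric role of $p$ and $q$: \propref{parallel} is stated with $m\le n$, but since $m=n=3$ this is trivially met, and the two factors are interchangeable.
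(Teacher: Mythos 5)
Your proof is correct and follows exactly the paper's route: the corollary is stated right after \propref{parallel} with the one-line justification that all orientable $3$-manifolds are stably parallelizable, which is precisely the fact you verify (in more detail) before applying \propref{parallel} with $m=n=3$.
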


There is a characterization of stable parallelizability of lens spaces~\cite{EMSS}:
The lens space $L^{2k-1}_p(\ell_1,\dots,\ell_k)$ is stably parallelizable if and only if $p\ge k$ and
$b_1^{2j}+\dots+b_k^{2j}=0$ mod $p$ for $j=1,2,\dots,[\frac{k-1}{2}]$.
We recall that two lens spaces $L^{2k-1}_p(\ell_1,\dots,\ell_k)$ and
$L^{2k-1}_p(\ell_1,\dots,\ell_k)$ are homotopy equivalent~\cite{Ol} if and only if the mod $p$ equation
$$\ell_1\ell_2\dots\ell_k=\pm a^k\ell_1'\ell_2'\dots\ell_k'$$ has a  solution $a\in\Z_p$. 
These conditions imply that  a lens space is  rarely homotopy equivalent to a stably parallelizable one.
Nevertheless, it was shown in~\cite{EMSS} that each $n=2k-1$ for infinitely many primes $p$
there are stably parallelizable lens spaces $L^n_p$. Clearly, there are more chances for the existence of stably parallelizable fake lens spaces with given $n$ and $p$. 
Thus, in~\cite{Kw} Kwak proved that for every odd $n=2k-1$ and $p\ge k$ there is a fake $n$-dimensional stably parallelizable lens space. Since every fake lens space is homotopy equivalent to a lens space~\cite{Wa}, we obtain that for every $n=2k-1$ and $p\ge k$ there is a lens space $L^n_p$ homotopy equivalent to a stably parallelizable manifold.

\subsection{Category of classifying maps}
We recall that any map $u:X\to B\pi=K(\pi,1)$ of a CW complex $X$ that induces an isomorphism of the fundamental group classifies the universal covering
$\tilde X$, i.e., $\tilde X$ is obtained as the pull-back of the universal covering $E\pi$ of $B\pi$ by means of $u$.
We call such a map {\em  a classifying map} of $X$.
\begin{prop}\label{deform}
Let $u:X\to B\pi$ be a map classifying the universal covering of a CW complex $X$. Then the following are equivalent:

(1) $\cat(u)\le k$; 

(2) $u$ admits a lift $u':X\to G_k(\pi)$ of $u$ with respect to $p^{\pi}_n:G_k(\pi)\to B\pi$;

(3) $u$ is homotopic to a map $f:X\to B\pi$ with $f(X)\subset B^{(k)}$.
\end{prop}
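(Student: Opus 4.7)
The plan is to establish the cycle $(1) \Leftrightarrow (2) \Rightarrow (3) \Rightarrow (1)$. The equivalence $(1) \Leftrightarrow (2)$ is immediate from Theorem~\ref{ganea-maps} applied to $u$, so no additional argument is required there. The real content lies in the two remaining implications, both of which rely on the fact (stated in the preliminaries) that, since the target is $K(\pi,1)$, the total space $G_k(\pi)$ has the homotopy type of a $k$-dimensional CW complex.

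For $(2) \Rightarrow (3)$, I would fix a homotopy equivalence $h: K \to G_k(\pi)$ with $\dim K \le k$, together with a homotopy inverse $h': G_k(\pi) \to K$. The composite $p_k^\pi \circ h: K \to B\pi$ is then a map from a $k$-dimensional complex into $B\pi$, so cellular approximation deforms it to a map $g: K \to B\pi$ whose image lies in the skeleton $B\pi^{(k)}$. Given a lift $u': X \to G_k(\pi)$ of $u$, I then write
\[
u = p_k^\pi \circ u' \simeq (p_k^\pi \circ h) \circ (h' \circ u') \simeq g \circ (h' \circ u'),
\]
and the right-hand side factors through $B\pi^{(k)}$, producing the desired deformation.

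For $(3) \Rightarrow (1)$, suppose $u$ is homotopic to $f = \iota \circ f'$ for some $f': X \to B\pi^{(k)}$, where $\iota: B\pi^{(k)} \hookrightarrow B\pi$ is the inclusion. By Theorem~\ref{cl}, $\cat(B\pi^{(k)}) \le \dim B\pi^{(k)} = k$, so $B\pi^{(k)}$ admits a cover $V_0, \dots, V_k$ by open sets contractible in $B\pi^{(k)}$, hence contractible in $B\pi$. Pulling this cover back along $f'$ yields a cover $U_i = (f')^{-1}(V_i)$ of $X$ on which each restriction $f|_{U_i}$ is null-homotopic in $B\pi$; homotopy invariance of the LS-category of a map then gives $\cat(u) = \cat(f) \le k$.

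The argument is essentially formal once Theorem~\ref{ganea-maps} and the $k$-dimensionality of $G_k(\pi)$ (up to homotopy) are in hand. The only place that requires genuine care is the cellular approximation step in $(2) \Rightarrow (3)$, which depends on the ability to replace $G_k(\pi)$ by a strictly $k$-dimensional model $K$; this is precisely the content of the preliminary assertion about $G_k(K(\pi,1))$ and therefore presents no additional obstacle.
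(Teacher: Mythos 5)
Your proposal is correct and follows essentially the same route as the paper: $(1)\Leftrightarrow(2)$ via Theorem~\ref{ganea-maps}, $(2)\Rightarrow(3)$ by replacing $G_k(\pi)$ with a $k$-dimensional model and pushing $p_k^\pi$ into $B\pi^{(k)}$ by cellular approximation, and $(3)\Rightarrow(1)$ from $\cat(f)\le\cat B\pi^{(k)}\le k$. You merely spell out in more detail the two steps the paper states in one line each (the deformation of $p_k^\pi$ and the pullback of a categorical cover), so there is nothing to add.
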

\begin{proof}
(1) $\Rightarrow$ (2) is a part of Theorem~\ref{ganea-maps}.

(2) $\Rightarrow$ (3)
Since $G_k(\pi)$ has the homotopy type of a $k$-dimensional complex, the map $p^{\pi}_k$ can be deformed to a map $p'$ with the image in $B\pi^{(k)}$.
Then we can take $f=p'\circ u'$.

(3) $\Rightarrow$ (1) For a map $f:X\to B\pi$ with $f(X)\subset B^{(k)}$ homotopic to $u$ we obtain $\cat(u)=\cat(f)\le\cat B\pi^{(k)}\le k$.
\end{proof}

\begin{thm}\label{space-map} 
Let $X$ be an $n$-dimensional CW complex with a classifying map $u:X\to B\pi$ having $\cat u=k$ and with $m$-connected  universal covering
$\tilde X$. Suppose that $\cat(u)+m\le n$. Then
$$
\cat X=\cat(u).
$$
\end{thm}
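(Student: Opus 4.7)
The strategy is to establish both inequalities $\cat u \le \cat X$ and $\cat X \le \cat u = k$, which together yield the stated equality. The inequality $\cat u \le \cat X$ is immediate: any categorical cover $\{U_0,\dots,U_j\}$ of $X$ consists of $X$-contractible sets, so each $u|_{U_i}$ is null-homotopic, giving $\cat u \le j$.

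For the reverse inequality $\cat X \le k$ I would invoke the Ganea--Schwarz criterion (Theorem~\ref{ganea}): it suffices to produce a section of $p_k^X\colon G_k(X)\to X$. Since $\cat u = k$, Theorem~\ref{ganea-maps} yields a lift $u'\colon X\to G_k(B\pi)$ of $u$, which is the same datum as a section $\sigma\colon X\to u^{*}G_k(B\pi)$ of the pullback fibration. Naturality of the Ganea construction provides a fibrewise comparison map
\[
\Phi\colon G_k(X)\to u^{*}G_k(B\pi)
\]
over $X$, and a section of $p_k^X$ is precisely a lift $\tau\colon X\to G_k(X)$ of $\sigma$ through $\Phi$. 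Restricted to fibers, $\Phi$ is the iterated join $(\Omega u)^{*(k+1)}$; since $u$ classifies the $m$-connected universal cover of $X$ we have $\pi_i(\Omega u) = 0$ for $i \le m$, so Proposition~\ref{join} applied with $s = k+1$ copies shows that the fiber map of $\Phi$ satisfies $\pi_i = 0$ for $i \le m+k$, making the homotopy fiber of $\Phi$ $(m+k)$-connected.

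The obstructions to lifting $\sigma$ through $\Phi$ then live in $H^{j+1}(X;\pi_j(\mathrm{hofib}\,\Phi))$ for $j \ge m+k+1$, and the dimensional hypothesis relating $n=\dim X$, $m$, and $k$ is what is used to force these cohomology groups to vanish. The resulting $\tau$ automatically satisfies $p_k^X\tau = \mathrm{id}_X$, so $\cat X \le k$. The main obstacle I expect is the careful connectivity bookkeeping in this last step: one must check that the $(m+k)$-connectivity supplied by Proposition~\ref{join} interacts correctly with the dimensional constraint on $n$ to eliminate every potential obstruction class, and that $\Phi$ is well-behaved enough (for instance, replaceable by a Serre fibration over $u^{*}G_k(B\pi)$) for standard obstruction theory to apply; an off-by-one anywhere in the connectivity estimate would be fatal.
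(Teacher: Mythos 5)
Your proposal is essentially the paper's own proof: factor $p_k^X$ through the pullback $u^*G_k(B\pi)$, turn the lift of $u$ into a section of that pullback, and lift the section through the comparison map $G_k(X)\to u^*G_k(B\pi)$, whose restriction to fibers is $\ast_{k+1}\Omega u$, using Proposition~\ref{join} and obstruction theory over the $n$-dimensional $X$. The one bookkeeping point you flagged is indeed where care is needed: $\pi_i(\ast_{k+1}\Omega u)=0$ for $i\le m+k$ yields an $(m+k-1)$-connected (not $(m+k)$-connected) homotopy fiber, so the obstruction groups vanish only when $m+k\ge n$; as in the paper's proof (which writes $k=n-m$), the argument really runs in the regime where equality holds.
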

\begin{proof}
Clearly, $\cat X\ge\cat u$.
We show that $\cat X\le k=n-m$.

Note that the map $p^X_{k}$ factors through the pull-back,
$p^X_{k}=p'\circ q$:
$$
\begin{CD}
G_{k}(X) @>q>> Z @>f'>> G_{k}(B\pi)\\
@. @Vp'VV @Vp^{\pi}_{n-2}VV\\
@. X @>u>> B\pi.\\
\end{CD}
$$
The condition $\cat u=k$ implies that $u$ has a lift $u'\to G_{k}(B\pi)$, $u=p^{\pi}_{k}u'$.
Hence, $p'$ admits a section $s:X\to Z$.
Since $X$ is $n$-dimensional, to show that $s$ has a lift with respect to $q$ it suffices to prove that the homotopy fiber $F$ of the map $q$ is $(n-1)$-connected.  Note that the homotopy exact sequence of the fibration
$$
F\to (p^X_{k})^{-1}(x_0)\stackrel{\bar f}\to (p^{\pi}_{k})^{-1}(y_0)
$$
where $\bar f$ is the restriction of $f'\circ q$ to the fiber $(p^X_{k})^{-1}(x_0)$ coincides with that of
$$
F'\to\ast_{k+1}\Omega(M)\stackrel{\ast\Omega(u)}\longrightarrow \ast_{k+1}\Omega(B\pi).
$$
Since $\pi_i(X)=0$ for $i\le n-k$, we obtain $\pi_i(\Omega u)=0$ for $i\le n-k-1$. 
Then by Proposition~\ref{join}, $\pi_{i}(\ast_{k+1}\Omega u)=0$ for $i\le k+(n-k-1)=n-1$ and hence $\pi_{i}(F)=0$
for $i\le n-1$.
Thus, $s$ admits a lift and, therefore, there is a section of $p^X_{k}$. Therefore, by Theorem~\ref{ganea} $\cat X\le k$.
\end{proof}

\subsection{The main computation}
\begin{prop}\label{map}
For any two lens spaces $L^n_p(\bar\ell)$ and $L^n_p(\bar\mu)$ there is a map $f:L^n_p(\bar\ell)\to L^n_p(\bar\mu)$
that induces isomorphism of the fundamental groups.
\end{prop}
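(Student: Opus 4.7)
The plan is to construct $f$ by first building a $\Z_p$-equivariant self-map $\tilde f : S^n \to S^n$ of the common universal cover, intertwining the $\bar\ell$-action on the source with the $\bar\mu$-action on the target via the identity automorphism of $\Z_p$, and then descending to the orbit spaces.

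To build $\tilde f$, I will pick, for each $i = 1, \ldots, k$, a positive integer $a_i$ with $a_i \ell_i \equiv \mu_i \pmod{p}$, which is possible because $\ell_i$ is a unit modulo $p$, and define
$$
\tilde f(z_1, \ldots, z_k) = \frac{(z_1^{a_1}, \ldots, z_k^{a_k})}{\|(z_1^{a_1}, \ldots, z_k^{a_k})\|}.
$$
The normalization is meaningful: some coordinate $z_i$ of a point on $S^n \subset \C^k$ is nonzero, and because $a_i \geq 1$, the corresponding $z_i^{a_i}$ is also nonzero, so the denominator never vanishes. Equivariance is then a direct calculation: the generator $t$ of $\Z_p$ acts on the $i$-th coordinate of the source as multiplication by $e^{2\pi i \ell_i/p}$, which $\tilde f$ converts into multiplication by $e^{2\pi i a_i \ell_i/p} = e^{2\pi i \mu_i/p}$ on the $i$-th coordinate of the target, matching the $\bar\mu$-action of $t$.

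Passing to orbit spaces produces the required $f : L^n_p(\bar\ell) \to L^n_p(\bar\mu)$, and because $\tilde f$ intertwines the two actions through the identity automorphism of $\Z_p$, the induced map on $\pi_1 = \Z_p$ is the identity, in particular an isomorphism. The one technical point worth flagging is the nonvanishing of the denominator, which is the reason for insisting on positive representatives $a_i \geq 1$ rather than allowing $a_i = 0$; once this is secured the remaining verifications are routine.
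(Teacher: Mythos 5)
Your construction is correct: the power map $\tilde f(z_1,\ldots,z_k)=(z_1^{a_1},\ldots,z_k^{a_k})/\|(z_1^{a_1},\ldots,z_k^{a_k})\|$ with $a_i\ell_i\equiv\mu_i\pmod p$ and $a_i\ge 1$ is well defined on $S^n$, is $\Z_p$-equivariant intertwining the $\bar\ell$- and $\bar\mu$-actions through the identity automorphism of $\Z_p$ (the unimodular factors do not affect the norm in the denominator), and hence descends to a map of orbit spaces inducing the identity on $\pi_1=\Z_p$. This is the classical explicit construction of maps between lens spaces, and it proves the proposition.

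It is, however, a genuinely different route from the paper's. The paper never writes down an equivariant map in coordinates; instead it embeds $L^n_p(\bar\mu)$ as the $n$-skeleton of $L^{n+2}_p(\bar\mu,1)$, forms the Borel-type space $S^n\times_{\Z_p}S^{n+2}$ with its two projections, observes that the projection $p_1$ onto $L^n_p(\bar\ell)$ is $(n+1)$-connected (its fiber is $S^{n+2}$) and therefore admits a section over the $n$-dimensional complex $L^n_p(\bar\ell)$, and takes $f$ to be a cellular approximation of $p_2\circ s$. Your argument is more elementary and more informative: it exhibits $f$ explicitly and, as a bonus, controls its degree ($a_1\cdots a_k$ modulo $p$), which is exactly the datum entering Olum's homotopy classification quoted later in the paper. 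The paper's argument is softer and coordinate-free; its advantage is that it only uses that both spaces are skeleta of models of $B\Z_p$ built from free sphere actions, so it transfers verbatim to situations (such as fake lens spaces) where no linear coordinates are available. For the statement as given, either proof suffices.
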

\begin{proof}
Let $q_1:S^n\to L^n_p(\bar\ell)$ and $q_2:S^n\to L^n_p(\bar\mu)$ be the projection onto the orbit spaces of the corresponding $\Z_p$-actions.
We note that $L^n_p(\bar\mu)$ is the $n$-skeleton in $L^{n+2}_p(\bar\mu,1)$. Let $\bar q_2:S^n+2\to L^n_p(\mu,1)$ be the corresponding projection.
Since in the Borel construction for $\Z_p$ action on $S^n$ and $S^{n+2}$ the projection $p_1$ is $(n+1)$-connected
$$
\begin{CD}
S^n @<<< S^n\times S^{n+2} @>>> S^{n+2}\\
@ Vq_1VV @ VqVV @ V\bar q_2VV\\
L^n_p(\bar\ell) @<p_1<< S^n\times_{\Z_p} S^{n+2} @>p_2>>L^{n+2}(\bar\mu, 1)\\
\end{CD}
$$
it admits a section $s:L^n_p(\bar\ell)\to S^n\times_{\Z_p} S^{n+2}$. Then $f$ is a cellular approximation of $p_2\circ s$.
\end{proof}

\begin{thm}\label{main}
For every odd $n=2k-1$ and distinct primes $p,q\ge k$,
$$\cat(L^{2k-1}_p\times L^{2k-1}_q)=n+1.$$ 
\end{thm}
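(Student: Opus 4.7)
The plan is to reduce the problem to the stably-parallelizable case handled by Proposition~\ref{parallel} and then apply Theorem~\ref{space-map} to promote a bound on the classifying map into a bound on $\cat X$. Set $X = L^{n}_p \times L^{n}_q$ and let $u \colon X \to B\pi$ be a classifying map, where $\pi = \Z_p \times \Z_q$. Its universal cover $S^n \times S^n$ is $(n-1)$-connected, so taking $m = n-1$ and $\dim X = 2n$ in Theorem~\ref{space-map}, the hypothesis $\cat(u) + m \le \dim X$ reduces to $\cat(u) \le n+1$. Thus it suffices to establish $\cat(u) \le n+1$; combined with the cup-length lower bound in $(*)$, this will yield $\cat X = \cat(u) = n+1$.

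To bound $\cat(u)$, the next step is to replace each factor by a lens space that is homotopy equivalent to a stably parallelizable manifold. By Kwak's theorem cited at the end of Subsection~4.1, the hypothesis $p \ge k$ guarantees a fake lens space $L^n_p(\alpha)$ that is stably parallelizable, and by Wall's result this is homotopy equivalent to a classical lens space $L^n_p(\bar\ell_0)$; the same holds for $q \ge k$. Proposition~\ref{map} then supplies maps
\[
f \colon L^{n}_p \longrightarrow L^{n}_p(\bar\ell_0), \qquad g \colon L^{n}_q \longrightarrow L^{n}_q(\bar\mu_0),
\]
each an isomorphism on $\pi_1$. Their product $F = f \times g$ is a $\pi_1$-isomorphism, so if $u' \colon X' \to B\pi$ classifies the universal cover of $X' = L^{n}_p(\bar\ell_0) \times L^{n}_q(\bar\mu_0)$, then $u$ is homotopic to $u' \circ F$.

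Finally, Proposition~\ref{parallel} applied to $X'$ gives $\cat X' = n+1$, whence $\cat(u') \le \cat X' = n+1$. By the equivalence (1)$\Leftrightarrow$(3) of Proposition~\ref{deform}, $u'$ is homotopic to a map with image in $B\pi^{(n+1)}$; composing that homotopy with $F$ deforms $u$ into $B\pi^{(n+1)}$ as well, so $\cat(u) \le n+1$. Feeding this into Theorem~\ref{space-map} closes the argument. The delicate point is the middle step: one must trust Kwak's and Wall's results to deliver a classical lens space in the required homotopy class so that Proposition~\ref{map} is applicable, and the numerical inequality $\cat(u) + m \le \dim X$ is \emph{saturated} in our situation -- no slack remains, so the invocation of Proposition~\ref{parallel} must produce exactly the advertised bound of $n+1$ for the reduced product.
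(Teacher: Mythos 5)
Your argument is correct and follows essentially the same route as the paper: Kwak's theorem plus Wall's theorem produce stably parallelizable homotopy lens spaces, Proposition~\ref{parallel} handles their product, Proposition~\ref{map} transfers the bound on the classifying map back to the given lens spaces via Proposition~\ref{deform}, and Theorem~\ref{space-map} (with the universal cover $S^n\times S^n$ being $(n-1)$-connected) converts $\cat(u)\le n+1$ into $\cat(L^n_p\times L^n_q)=n+1$. Your closing remark that the hypothesis of Theorem~\ref{space-map} is exactly saturated here is an accurate observation about why the stably parallelizable reduction must deliver precisely $n+1$.
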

\begin{proof} Let $L^n_p=L^n_p(\bar\ell)$ and $L^n_q(\bar\ell')$ for $\bar\ell=(\ell_1,\dots,\ell_k)$ and
$\bar\ell'=(\ell'_1,\dots,\ell'_k)$.
By Kwak's theorem (Theorem 3.1~\cite{Kw}) there are stably parallelizable fake lens spaces $L_p^n(\alpha)$ and $L^n_q(\alpha')$.
By Wall's theorem they are homotopy equivalent to lens spaces $L_p^n(\bar\mu)$ and $L^n_q(\bar\mu')$
for some $\bar\mu$ and $\bar\mu'$. By Proposition~\ref{parallel}, $\cat(L_p^n(\bar\mu)\times L_q^n(\bar\mu'))=n+1$.
By Proposition~\ref{deform}, there is a classifying map $u:L_p^n(\bar\mu)\times L_q^n(\bar\mu')\to B\Z_{pq}^{(n+1)}$.
By Proposition~\ref{map} there are maps $f_p:L^n_p\to L^n_p(\bar\mu)$ and $f_q:L^n_q\to L^n_q(\bar\mu')$ that induce an
isomorphism of the fundamental groups. Therefore, $$u'=u\circ(f_p\times f_q):L^n_p\times L^n_q\to B\Z_{pq}^{(n+1)}$$
is a classifying map for $L^n_p\times L^n_q$. Hence, $\cat(u')\le n+1$. Since the universal covering of the space $L^n_p\times L^n_q$ is $(n-1)$-connected, by Theorem~\ref{space-map}
we obtain $\cat(L^n_p\times L^n_q)=n$.
\end{proof}
\begin{rem}
When $p$ and $q$ are relatively prime but not necessarily prime we can prove the equality $\cat(L^{n}_p\times L^{n}_q)=n+1$ with a stronger restriction $p,q\ge n+3$. We do not present the proof, since it is more technical. It consists of computation of obstructions for deforming a classifying map $u:L^{n}_p\times L^{n}_q\to B\Z_{pq}$ to the $(n+1)$-skeleton. Vanishing of the first obstruction happens without any restriction on $p$ and $q$. Since
it is a curious fact on its own it is presented in the next section. The higher obstructions vanish due to 
the fact that cohomology groups of $\Z_{pq}$ are $pq$-torsions and a theorem of Serre~\cite{Se} which  states that the group $\pi_{n+k}(S^n)$ has zero $r$-torsion component for $k < 2r-4$.
\end{rem}
We note that Theorem~\ref{main} can be stated  for all lens spaces $L^n_p$ with values of $n$ and $p$ for which
there exists a stably parallelizable fake lens space $L^n_p(\alpha)$. 
\begin{question}
For which values of $n$ and and $p$ is there  a stably parallelizable fake lens space $L^n_k(\alpha)$ ?
\end{question}
This does not seem to happen very often when $p=2$. At least a real $(2k-1)$-dimensionl projective space is stably parallelizable if and only if
$k=1,2$, or $4$.

\section{The Berstein-Schwarz class for the product of finite cyclic groups} 

Let $u:L^n_p\times L^n_q\to B\Z_{pq}$ be a classifying map. By Theorem~\ref{space-map} and the fact that $\cat(L^n_p\times L^n_q)\ge n+1$,
the condition $\cat(u)\le n+1$ is equivalent to
the equality $\cat(L^n_p\times L^n_q)=n+1$. By Proposition~\ref{deform} the inequality $\cat(u)\le n+1$ is equivalent to the existing of a lift $u'$ of $u$ with respect to $p_n:G_{n+1}(\Z_{pq})\to B\Z_{pq}$. In view of Corollary~\ref{ganea-obstr} the primary obstruction to such lift is $u^*(\beta^{n+2})$ where
$\beta$ is the Berstein-Schwarz class of $\Z_{pq}$. We prove that this obstruction is always zero and even more:
\begin{thm}\label{ber-nul}
For all $n$, and all relatively prime $p$ and $q$,
$$u^*(\beta^{n+1})=0.$$
\end{thm}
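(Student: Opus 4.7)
The plan is to split $\beta_\pi$ according to the CRT decomposition $\Z_{pq}\cong\Z_p\times\Z_q$, and to show that in the cup power $\beta_\pi^{n+1}$ the mixed cross terms vanish for order-of-group reasons while the pure terms vanish after pull-back to $L^n_p\times L^n_q$ for dimensional reasons.

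Set $A=\Z_p$, $B=\Z_q$. Shapiro's lemma applied to $I(A)\otimes\Z B\cong\operatorname{Ind}^\pi_A I(A)$ and $\Z A\otimes I(B)\cong\operatorname{Ind}^\pi_B I(B)$ gives $H^*(B\pi;I(A)\otimes\Z B)\cong H^*(BA;I(A))$ and $H^*(B\pi;\Z A\otimes I(B))\cong H^*(BB;I(B))$; in degree one these are $\Z/p$ and $\Z/q$, with K\"unneth generators $\beta_A\boxtimes N_B$ and $N_A\boxtimes\beta_B$, where $N_A=\sum_{a\in A}a$, $N_B=\sum_{b\in B}b$. The short exact sequences $0\to I(A)\otimes\Z B\to I(\pi)\to I(B)\to 0$ and its analogue inject these groups into $H^1(B\pi;I(\pi))\cong\Z/pq$ (the relevant connecting maps vanish because $I(A)^\pi=I(B)^\pi=0$); the images are the subgroups $q\Z/pq$ and $p\Z/pq$. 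Since $\gcd(p,q)=1$ these subgroups span all of $\Z/pq$, so I may write $\beta_\pi=\gamma_1+\gamma_2$ with $\gamma_1=q\alpha_1$, $\gamma_2=p\alpha_2$ for some $\alpha_i\in H^1(B\pi;I(\pi))$; equivalently $\gamma_i=\iota_i(\tilde\gamma_i)$, the image of a unit multiple of $\beta_A\boxtimes N_B$ (respectively $N_A\boxtimes\beta_B$) under the inclusion $\iota_i$.

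Because $|\pi|=pq$ annihilates $H^k(B\pi;-)$ in positive degrees, $\gamma_1\smile\gamma_2=pq(\alpha_1\smile\alpha_2)=0$ in $H^2(B\pi;I(\pi)^{\otimes 2})$ and similarly $\gamma_2\smile\gamma_1=0$. By associativity of the cup product, every word in $\gamma_1,\gamma_2$ of length $n+1$ containing both letters has an adjacent mixed pair and is therefore zero, so $\beta_\pi^{n+1}=\gamma_1^{n+1}+\gamma_2^{n+1}$. Now $\gamma_1^{n+1}=\iota_1^{\otimes(n+1)}(\tilde\gamma_1^{n+1})$, and K\"unneth (Tor-free since $\gcd(p,q)=1$) computes $\tilde\gamma_1^{n+1}=c^{n+1}\beta_A^{n+1}\boxtimes N_B^{\otimes(n+1)}$. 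Pulling back by $u=u_p\times u_q$, the first external factor becomes $u_p^*\beta_A^{n+1}\in H^{n+1}(L^n_p;u_p^*I(A)^{\otimes(n+1)})$, which vanishes because $\dim L^n_p=n$; hence $u^*\gamma_1^{n+1}=0$, and symmetrically $u^*\gamma_2^{n+1}=0$, giving $u^*\beta_\pi^{n+1}=0$. The main delicate point is the identification of the Shapiro/K\"unneth image inside $\Z/pq$ as the $q$-divisible subgroup $q\Z/pq$; once this is settled, the cross-term vanishing is automatic from the order of $\pi$, and the pure-term vanishing is a one-line K\"unneth computation.
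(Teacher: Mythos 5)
Your proof is correct and follows the same overall strategy as the paper's: write $\beta_{pq}$ as a sum of a class coming from $H^1(\Z_{pq};I(p)\otimes\Z(q))$ and one coming from $H^1(\Z_{pq};\Z(p)\otimes I(q))$, kill every mixed cup-word by a divisibility/torsion argument, and kill the two pure powers by restricting to the $n$-dimensional factors. The difference lies in how the two main steps are justified, and in both cases your route is softer. For the decomposition, the paper fixes $k,l$ with $kp+lq=1$ and runs an explicit snake-lemma computation on the Cayley graph of $\Z_{pq}$ to exhibit the cocycle $(\beta_p\times lt_q,\ kt_p\times\beta_q)$ representing $\beta_{pq}$; you instead identify the two summands as $\Z/p$ and $\Z/q$ via Shapiro's lemma, check that they inject into $H^1(\Z_{pq};I(pq))\cong\Z/pq$ because $I(p)^{\pi}=I(q)^{\pi}=0$ kills the relevant connecting maps, and conclude by order-counting that their images $q\Z/pq$ and $p\Z/pq$ span, so \emph{some} such decomposition of $\beta_{pq}$ exists. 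This is cleaner but less explicit; the paper's computation pins the coefficients down as $l$ and $k$, which your argument does not need. For the mixed terms, the paper works after pullback and pairs the $q$-divisibility of $i_p^*(\beta_p)$ against the $q$-torsion of $\beta_q$, whereas you observe that already in $H^*(B\Z_{pq})$ every mixed word carries a factor of $pq=|\pi|$ and hence vanishes; this is marginally stronger, since it gives $\beta_{pq}^{n+1}=\gamma_1^{n+1}+\gamma_2^{n+1}$ upstairs before any pullback. The one point you should make explicit is that $\beta_A\boxtimes N_B$ really generates $H^1(\pi;I(A)\otimes\Z B)\cong\Z/p$, i.e., that the cross product with the norm class realizes the Shapiro isomorphism; the paper's cochain-level computation supplies exactly this fact, and your pure-term argument (restriction to $L^n_p$ followed by the dimension bound $H^{n+1}(L^n_p;-)=0$) depends on it.
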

\begin{rem}One can show that for sufficiently large $p$ and $q$ the higher obstructions are trivial as well, since the homotopy groups of the fiber of $p_n^{\pi}$
do not contain $r$-torsions for large $r$. This would give a result similar to Theorem~\ref{main} which does not cover small values of $p$.
\end{rem}

We denote by $Z(m)$, $I(m)$, $\epsilon_m:\Z(m)\to\Z$, and $\beta_m$, the group ring $\Z Z_m$, the augmentation ideal, the augmentation, and
the Berstein-Schwarz class for $\Z_m$ respectively. Let $t_m=\sum_{g\in\Z_m}g\in\Z(m)$.
We use the same notation $t_m$ for a constant map $t_m:\Z_m\to\Z(m)$ with the value $t_m$.
We note that the group of invariants of $\Z(m)$ is $\Z$ generated by $t_m$. Thus, $H^{0}(\Z_m;\Z(m))=\Z$.

\begin{prop}\label{order p} 
Let $\beta_p$ denote the Berstein-Schwarz class for the  group $\Z_p=\Z/p\Z$.
Then $\beta_p$ has order $p$ and hence is $q$-divisible for any $q$ relatively prime to $p$.
\end{prop}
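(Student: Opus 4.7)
My plan is to work directly with the algebraic definition of $\beta_p$ via the connecting homomorphism, and exploit the explicit description of $H^0(\Z_p;\Z(p))$ given just before the statement.

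First I would write out the relevant segment of the long exact sequence in group cohomology associated to the short exact sequence of $\Z_p$-modules
$$0\to I(p)\to \Z(p)\stackrel{\epsilon_p}\to \Z\to 0,$$
namely
$$H^0(\Z_p;I(p))\to H^0(\Z_p;\Z(p))\stackrel{\epsilon_*}\to H^0(\Z_p;\Z)\stackrel{\delta}\to H^1(\Z_p;I(p)).$$
The author has noted $H^0(\Z_p;\Z(p))=\Z$ generated by $t_p$, and of course $H^0(\Z_p;\Z)=\Z$ generated by $1$. A direct check shows $H^0(\Z_p;I(p))=0$: an invariant element $\sum a_g g$ of $I(p)$ must have all coefficients equal to some constant $c$, and the augmentation condition $pc=0$ forces $c=0$.

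Next I would compute $\epsilon_*(t_p)=\epsilon_p\!\left(\sum_{g\in\Z_p}g\right)=p\cdot 1$. Hence the image of $\epsilon_*$ is $p\Z\subset\Z$, and by exactness $\delta$ factors through an injection $\Z/p\Z\hookrightarrow H^1(\Z_p;I(p))$. Since $\beta_p=\delta(1)$ by definition, this immediately gives that $\beta_p$ has order exactly $p$: we have $p\beta_p=\delta(p)=0$ because $p\in p\Z=\operatorname{im}\epsilon_*$, while for $0<k<p$ the integer $k$ does not lie in $p\Z$, so $k\beta_p=\delta(k)\ne 0$.

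For the $q$-divisibility claim, with $(p,q)=1$ I would pick $a,b\in\Z$ with $aq+bp=1$ (B\'ezout) and compute
$$q\cdot(a\beta_p)=(aq)\beta_p=(1-bp)\beta_p=\beta_p - b(p\beta_p)=\beta_p,$$
so $\beta_p$ is $q$-divisible. There is no genuine obstacle here; the only subtlety to state carefully is the vanishing $H^0(\Z_p;I(p))=0$, which is what ensures $\delta$ is injective on the cokernel of $\epsilon_*$ and thus pins down the order of $\beta_p$ precisely rather than merely as a divisor of $p$.
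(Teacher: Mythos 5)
Your proof is correct and follows essentially the same route as the paper: both work from $\beta_p=\delta(1)$, identify $H^0(\Z_p;\Z(p))$ with $\Z\langle t_p\rangle$, and compute $\epsilon_*(t_p)=p$ to get $p\beta_p=0$, with your exactness argument supplying the precise order $p$ that the paper merely asserts. One small correction: the injectivity of $\delta$ on $\operatorname{coker}(\epsilon_*)$ follows from exactness at $H^0(\Z_p;\Z)$ alone (i.e.\ $\ker\delta=\operatorname{im}\epsilon_*$), so your computation $H^0(\Z_p;I(p))=0$, while true, is not what that step rests on and could be omitted.
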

\begin{proof} Let $t\in\Z_p$ be a generator.
We note that $$H^0(\Z_p;\Z\Z_p)=(\Z\Z_p)^{\Z_p}=\Z\langle 1+t+\dots+t^{p-1}\rangle$$ is the group of invariants which
is isomorphic to the subgroup  of $\Z\Z_p$ generated by $1+t+\dots+t^{p-1}$. Then the augmentation homomorphism
$\epsilon:\Z\Z_p\to\Z$ induces a homomorphism $\epsilon_*:H^0(\Z_p;\Z\Z_p)\to H^0(\Z_p;\Z)=\Z$ that takes
the generator $1+t+\dots+t^{p-1}$ to $p$. Thus, $p\beta_p=p\delta(1)=\delta(p)=0$.
 
Note that $\beta_p$ generates a subgroup  $G$ of order $p$ in $H^1(\pi; I(\Z\pi))$.  Therefore it is $q$-divisible
for $q$ with $(p,q)=1$.
\end{proof}

We recall that the cross product  $$H^i(X;M)\times H^j(X';M')\to H^{i+j}(X\times X';M\otimes_{\Z} M')$$ is defined for any
$\pi_1(X)$-module $M$ and $\pi_1(X')$-module $M'$. Also we note that $$H^i(X;M\oplus M')=H^i(X;M)\oplus H^i(X;M').$$
\begin{prop}
For relatively prime $p$ and $q$ there are $k,l\in\Z$ such that the Berstein-Schwarz class $\beta_{pq}$ is the image
of the class $$(\beta_p\times l, k\times\beta_q)\in H^1(\Z_{pq};I(p)\otimes\Z(q))\oplus H^1(\Z_{pq};\Z(p)\otimes I(q))$$ under the coefficient homomorphism
$$\phi:I(p)\otimes\Z(q)\bigoplus\Z(p)\otimes I(q)\to I(pq)\subset\Z(pq)=\Z(p)\otimes\Z(q)$$ defined by the inclusions of the direct summands into $\Z(p)\otimes\Z(q)$ and the summation.
\end{prop}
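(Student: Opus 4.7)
The plan is to combine naturality of the connecting homomorphism with B\'ezout's identity. I use two auxiliary short exact sequences of $\Z_{pq}$-modules obtained by tensoring:
$$0\to I(p)\otimes\Z(q)\to\Z(pq)\to\Z(q)\to 0,\qquad 0\to\Z(p)\otimes I(q)\to\Z(pq)\to\Z(p)\to 0,$$
the first from the defining sequence of $\beta_p$ tensored with $\Z(q)$, the second from that of $\beta_q$ tensored with $\Z(p)$; both $\Z(p)$ and $\Z(q)$ are $\Z$-flat. Write $\delta_1,\delta_2$ for the corresponding connecting homomorphisms. The augmentations $\epsilon_q$ and $\epsilon_p$ fit these sequences into morphisms of short exact sequences targeting $0\to I(pq)\to\Z(pq)\to\Z\to 0$; the left vertical maps are precisely the inclusions $\iota_1,\iota_2$ whose sum is $\phi$, and the middle vertical maps are the identity.

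The first step is to identify $\delta_1(t_q)=\beta_p\times t_q\in H^1(\Z_{pq};I(p)\otimes\Z(q))$. This follows from the standard compatibility of the cross product with the connecting homomorphism obtained by tensoring a short exact sequence with a flat module, namely $\delta(\alpha\times\mu)=\delta(\alpha)\times\mu$, applied to $\alpha=1\in H^0(\Z_p;\Z)$ (with $\delta(1)=\beta_p$) and $\mu=t_q\in H^0(\Z_q;\Z(q))$. Symmetrically, $\delta_2(t_p)=t_p\times\beta_q$. Naturality of $\delta$ along the morphism to the $I(pq)$-sequence then yields
$$(\iota_1)_*(\beta_p\times t_q)=\delta_{pq}((\epsilon_q)_*(t_q))=\delta_{pq}(q)=q\,\beta_{pq},$$
since $\epsilon_q(t_q)=q$; and similarly $(\iota_2)_*(t_p\times\beta_q)=p\,\beta_{pq}$.

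Since by construction $\phi_*(a,b)=(\iota_1)_*(a)+(\iota_2)_*(b)$, bilinearity of the cross product gives
$$\phi_*(\beta_p\times l,\,k\times\beta_q)=lq\,\beta_{pq}+kp\,\beta_{pq}=(lq+kp)\beta_{pq}.$$
Since $(p,q)=1$, B\'ezout supplies $k,l\in\Z$ with $lq+kp=1$, and then the above equals $\beta_{pq}$, finishing the proof. The only mildly technical ingredient is the compatibility $\delta(\alpha\times\mu)=\delta(\alpha)\times\mu$, but this is a routine check on bar-resolution cocycles and I do not expect it to be a serious obstacle.
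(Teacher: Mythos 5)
Your proof is correct, and its overall strategy---naturality of the connecting homomorphism combined with B\'ezout's identity $kp+lq=1$---is the same as the paper's; the difference lies in how the key identification of the connecting-homomorphism images is carried out. The paper works with a single morphism of short exact sequences whose left column is $\phi$ itself (with source the direct sum $I(p)\otimes\Z(q)\oplus\Z(p)\otimes I(q)$), exhibits an explicit invariant cocycle $\theta(a\times b)=(lt_q,kt_p)$ mapping to $1\in H^0(\Z_{pq};\Z)$, and then computes $\delta'([\theta])$ by hand via the Snake Lemma, evaluating a lift $\bar\theta$ on the edges of the Cayley graph of $\Z_{pq}$ in order to recognize the answer as the pair of cross products $(\beta_p\times lt_q,\,kt_p\times\beta_q)$. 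You instead split the problem into two morphisms of short exact sequences, one for each summand, and replace the explicit cochain computation by the formal compatibility $\delta(\alpha\times\mu)=\delta(\alpha)\times\mu$ of the cross product with the connecting homomorphism of a sequence tensored with a $\Z$-flat module; this yields $(\iota_1)_*(\beta_p\times t_q)=\delta_{pq}(q)=q\,\beta_{pq}$ and $(\iota_2)_*(t_p\times\beta_q)=p\,\beta_{pq}$ directly, and summation together with B\'ezout finishes the argument. Your route is cleaner and avoids the Cayley-graph bookkeeping, at the cost of invoking (and, as you note, having to verify once on cocycles) the cross-product compatibility; the paper's route is more hands-on and produces an explicit cocycle representative rather than only the cohomology class. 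Both are complete proofs of the statement.
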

\begin{proof} Let $k$ and $l$ be such that $kp+lq=1$. 

The addition in $\Z(pq)$ defines the following commutative diagram
$$
\begin{CD}
0 @>>> I(pq) @>>> \Z(pq) @>\epsilon_{pq}>>\Z @>>> 0\\
@. @A{\phi}AA @A+AA @A\epsilon_p+\epsilon_qAA @.\\
0 @>>> I(p)\otimes\Z(q)\oplus\Z(p)\otimes I(q) @>>>\Z(pq)\oplus\Z(pq) @>{\alpha}>>\Z(q)\oplus\Z_(p) @>>> 0
\end{CD}
$$
which defines a commutative square for cohomology
$$
\begin{CD}
H^0(\Z_{pq},\Z) @>\delta>> H^1(\Z_{pq},I(pq))\\
@A{\epsilon_*}AA @A{\phi_*}AA\\
H^0(\Z_{pq},\Z(p)\oplus\Z(q)) @>\delta'>> H^1(\Z_{pq},I(p)\otimes\Z(q)\oplus\Z(p)\otimes I(q)).\\
\end{CD}
$$
The homomorphism $\theta:\Z(pq)\to\Z(p)\oplus\Z(q))$ defined on the basis as $\theta(a\times b)=(lt_q,kt_p)$
is  a cochain since it is $\Z_{pq}$-equivariant. It is a cocycle, since it is constant. Note that
$(\epsilon_p+\epsilon_q)\circ\theta(a\times b)=kp+lq=1$ for any $a\in\Z_p$ and $b\in\Z_q$.
This means that the cohomology class $[\theta]$ is taken by $\epsilon_*$ to a generator $1\in H^0(\Z_{pq};\Z)$.
Then $\beta_{pq}=\delta(1)=\phi\delta([\theta])$.

Consider a $\Z(pq)$-homomorphism $\bar\theta:\Z(p)\times\Z(q)\to \Z(pq)\oplus\Z(pq)$ defined by the formula $\bar\theta(a\times b)=(a\times lt_q,kt_p\times b)$.
Since $\alpha(\bar\theta)=\theta$, by the Snake Lemma $\delta'([\theta])$ is defined by the 1-cocycle $\delta(\bar\theta):C_1\to  I(p)\otimes\Z(q)\oplus\Z(p)\otimes I(q)$. Note that the cellular 1-dimensional chain group $C_1$ is defined the Cayley graph $C$ of $\Z_{pq}$.
Thus, $C=C^p\times\Z_q\cup \Z_p\times C^q$ where $C^p$ and $C^q$ are the Cayley graphs (cycles) for $\Z_p$ and $\Z_q$ respectively.
Note that $$\delta(\bar\theta)([a_1,a_2]\times b)=\bar\theta((a_2-a_1)\times b=\bar\theta(a_2\times b-\bar\theta(a_1\times b)$$
$$=
(a_2\times lt_q, kt_p\times b)-(a_1\times lt_q, kt_p\times b)=((a_2-a_1)\times lt_q,0)=$$
$$(\beta_p\times lt_q)([a_1,a_2]\times b)=
(\beta_p\times lt_q, kt_p\times\beta_q)([a_1,a_2]\times b).$$ Similarly, we have the equality for edges of the  type $a\times[b_1,b_2]$.
Here $\beta_p$ and $\beta_q$ denote the canonical cochains that define the Berstein-Schwarz classes of $\Z_p$ and $\Z_q$.

Thus, $\delta'([\theta])=(\beta_p\times l,k\times\beta_q)$ in $$H^1(\Z_{pq}, I(p)\otimes\Z(q))\oplus H^1(\Z_{pq},\Z(p)\otimes I(q))= H^1(\Z_{pq}, I(p)\otimes\Z(q)\oplus\Z(p)\otimes I(q)).$$
\end{proof}

\subsection{Proof of Theorem~\ref{ber-nul}} We show that
$u^*(\beta_{pq}^{n+1})=0$ where $$u=i_p\times i_q:L^{n}_p\times L^{n}_q\to B\Z_p\times B\Z_q=B\Z_{pq}$$
is the inclusion.
Note that $(\beta_p\times lt_q,kt_p\times\beta_q)=\beta_p\times lt_q+kt_p\times\beta_q$.
Thus, it suffices to show that $u^*(\beta_p\times lt_q+kt_p\times\beta_q)^{n+1}=0$.
Note that  $$u^*(\beta_p\times l+k\times\beta_q)=i_p^*(\beta_p)\times l +k\times i^*_q(\beta_q).$$
Then $(i_p^*(\beta_p)\times l +k\times i^*_q(\beta_q))^{n+1}=(i_p^*(\beta_p)\times l)^{n+1} +(k\times i^*_q(\beta_q)^{n+1}+F$
where $F$ consists of monomials containing both factors.

Claim 1: $(i_p^*(\beta_p)\times l)^{n+1}=0$ and $(k\times i^*_q(\beta_q))^{n+1}=0$.

Proof: There is an automorphism of the coefficients 
$$
(I(p)\otimes\Z(q))\otimes\dots\otimes(I(p)\otimes\Z(q))\to I(p)\otimes\dots\otimes I(p)\otimes\Z(q)\otimes\dots\otimes\Z(q)
$$
that takes
 $(i_p^*(\beta_p)\times l)^{n+1}$ to $(i_p^*(\beta_p)^{n+1}\times l^{n+1}=0$.
Similarly, $(k\times i^*_q(\beta_q)^{n+1}=0$.

Claim 2: $(i_p^*(\beta_p)\times l)A(k\times i^*_q(\beta_q))=0$ for any $A$. 

Proof: Indeed, since $i_p^*(\beta_p)$ is divisible by $q$ (see Proposition~\ref{order p}),
$$
(i_p^*(\beta_p)\times l)A(k\times i^*_q(\beta_q))=(\frac{1}{q}(i_p^*(\beta_p)\times l))Aq(k\times i^*_q(\beta_q))=0.$$
Thus, $F=0$ and the result follows.
\qed

\section{On the category of  {\it ko}-inessential manifolds}

\subsection{ Deformation into the $(n-2)$-dimensional skeleton}
We recall that a classifying map $u:M\to B\pi$ of a closed orientable $n$-manifold $M$ can be deformed into the $(n-1)$-skeleton $B\pi^{(n-1)}$
if and only if $u_*([M])\ne 0$ where $[M]\in H_n(M;\Z)$ denotes an integral fundamental class~\cite{Ba}. 
In~\cite{BD} we proved the following proposition which sets the stage for computation of the second obstruction.
\begin{prop}\label{BD}
Every inessential $n$-manifold $M$ with a fixed CW structure on it admits a classifying map
$u:M\to B\pi$ with $u(M)\subset B\pi^{(n-1)}$ and $u(M^{(n-1)})\subset B\pi^{(n-2)}$.
\end{prop}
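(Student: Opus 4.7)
The plan is to reduce the task to a primary-obstruction calculation and resolve it via Poincar\'e duality together with the inessentiality hypothesis.  By the very definition of inessentiality, a classifying map $u_0:M\to B\pi$ admits a deformation into $B\pi^{(n-1)}$; after cellular approximation I may assume $u_0(M^{(k)})\subset B\pi^{(k)}$ for every $k$, so in particular $u_0(M)\subset B\pi^{(n-1)}$.  The remaining condition $u_0(M^{(n-1)})\subset B\pi^{(n-2)}$ is equivalent, via the cofibration $B\pi^{(n-2)}\hookrightarrow B\pi\to B\pi/B\pi^{(n-2)}$, to the null-homotopy of the composition
\[
\Phi \;=\; q\circ u_0|_{M^{(n-1)}}:\;M^{(n-1)}\longrightarrow B\pi/B\pi^{(n-2)}.
\]

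Because $B\pi/B\pi^{(n-2)}$ is $(n-2)$-connected and $M^{(n-1)}$ is $(n-1)$-dimensional, $\Phi$ is null-homotopic iff the primary obstruction $\Phi_*:H_{n-1}(M^{(n-1)})\to H_{n-1}(B\pi/B\pi^{(n-2)})$ vanishes.  Since $\Phi$ extends to $\bar\Phi = q\circ u_0:M\to B\pi/B\pi^{(n-2)}$ and the inclusion $M^{(n-1)}\hookrightarrow M$ induces a surjection on $H_{n-1}$, it suffices to show $\bar\Phi_*=0$ on $H_{n-1}(M)$.  The vanishing $H_{n-1}(B\pi^{(n-2)})=0$ implies that the long exact sequence of the pair $(B\pi,B\pi^{(n-2)})$ makes $q_*:H_{n-1}(B\pi)\hookrightarrow H_{n-1}(B\pi/B\pi^{(n-2)})$ injective, reducing the task to proving $(u_0)_*=0$ on $H_{n-1}(M)$.

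This key vanishing follows from Poincar\'e duality and the projection formula: for every $\alpha\in H^1(B\pi)$,
\[
(u_0)_*\bigl(u_0^*\alpha\frown[M]\bigr)\;=\;\alpha\frown (u_0)_*[M]\;=\;0
\]
by inessentiality.  Since $u_0^*:H^1(B\pi)\to H^1(M)$ is an isomorphism (as $u_0$ classifies the universal cover) and cap product with $[M]$ realizes the Poincar\'e-duality isomorphism $H^1(M)\cong H_{n-1}(M)$, every class in $H_{n-1}(M)$ has the form $u_0^*\alpha\frown[M]$ and is therefore killed by $(u_0)_*$.  Consequently $\Phi$ is null-homotopic, and $u_0|_{M^{(n-1)}}$ admits a deformation through $B\pi$ into $B\pi^{(n-2)}$.

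The last step is to extend this deformation to all of $M$ while ensuring the time-$1$ map still has image in $B\pi^{(n-1)}$.  The homotopy extension property of the cofibration $(M,M^{(n-1)})$ supplies an extension as a homotopy in $B\pi$; the further requirement on $B\pi^{(n-1)}$ reduces to pushing each $n$-cell of $M$ rel boundary back into $B\pi^{(n-1)}$, with obstruction in $\pi_n(B\pi,B\pi^{(n-1)})\cong\pi_{n-1}(B\pi^{(n-1)})$.  The main technical obstacle I expect is precisely this cell-level coherence: the first deformation on $M^{(n-1)}$ must be carried out so that the resulting relative classes on the $n$-cell boundaries all vanish simultaneously.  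A cleaner route, which I would pursue if the direct extension approach runs into trouble, is to build $u$ cellularly over the skeleta of $M$ from scratch, using the vanishing of $(u_0)_*$ on $H_{n-1}(M)$ as the primary-obstruction input at dimension $n-1$ and the triviality $\pi_n(B\pi)=0$ to extend over the top cells.
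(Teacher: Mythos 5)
The paper does not reprove this proposition; it quotes it from~[BD], where the argument is an obstruction-theoretic one with \emph{local} coefficients. Measured against that, your proof has a genuine gap at its central step. You claim that $u_0|_{M^{(n-1)}}$ compresses into $B\pi^{(n-2)}$ as soon as $\Phi=q\circ u_0|_{M^{(n-1)}}$ is null-homotopic, but only the converse implication is formal. The obstruction to compressing a map of the $(n-1)$-complex $M^{(n-1)}$ into $B\pi^{(n-2)}$ lives in $H^{n-1}(M;\pi_{n-1}(B\pi,B\pi^{(n-2)}))$ with twisted coefficients in the $\Z\pi$-module $\pi_{n-1}(B\pi,B\pi^{(n-2)})$, whereas the obstruction to null-homotoping $\Phi$ lives in ordinary cohomology with coefficients in $\pi_{n-1}(B\pi/B\pi^{(n-2)})$. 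Since $B\pi^{(n-2)}$ is not simply connected, Blakers--Massey only makes the comparison map $\pi_{n-1}(B\pi,B\pi^{(n-2)})\to\pi_{n-1}(B\pi/B\pi^{(n-2)})$ an epimorphism; it factors through the coinvariants, killing all elements $\gamma x-x$. So vanishing of your invariant is implied by, but does not imply, vanishing of the true obstruction. This is exactly the subtlety the paper isolates in Lemma~\ref{coinvariants-mono} and Theorem~\ref{obstruction}: there the obstruction sits in the top dimension, where Poincar\'e duality lands it in $H_0(M;\cdot)=$ coinvariants, so the injectivity of $\bar p_*$ rescues the passage to $B\pi/B\pi^{(n-2)}$; in dimension $n-1$ duality lands the obstruction in $H_1(M;\cdot)$, not in the coinvariants, and your reduction discards the needed information. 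Two further problems: even granting the reduction to $\Phi$, ``null-homotopic iff $\Phi_*=0$ on $H_{n-1}$'' ignores the $\operatorname{Ext}(H_{n-2}(M^{(n-1)}),\pi_{n-1}(B\pi/B\pi^{(n-2)}))$ summand of $[M^{(n-1)},B\pi/B\pi^{(n-2)}]\cong H^{n-1}(M^{(n-1)};\pi_{n-1}(B\pi/B\pi^{(n-2)}))$ coming from Hopf--Whitney plus universal coefficients, and $H_{n-2}(M)\cong H^2(M)$ has torsion precisely in the cases of interest here; and the final extension over the top cells while staying inside $B\pi^{(n-1)}$, which you flag but do not resolve, is where inessentiality actually has to be used.

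The repair is to work with the correct obstruction group directly, and it makes the statement easier, not harder. For $n\ge 4$ the space $E\pi^{(n-2)}$ is simply connected and $(E\pi,E\pi^{(n-2)})$ is $(n-2)$-connected, so by the relative Hurewicz theorem $\pi_{n-1}(B\pi,B\pi^{(n-2)})=\pi_{n-1}(E\pi,E\pi^{(n-2)})\cong H_{n-1}(E\pi,E\pi^{(n-2)})=C_{n-1}(E\pi)$ is a free $\Z\pi$-module $\bigoplus\Z\pi$. Poincar\'e duality then gives
\[
H^{n-1}\Bigl(M;\bigoplus\Z\pi\Bigr)\cong H_1\Bigl(M;\bigoplus\Z\pi\Bigr)=\bigoplus H_1(\widetilde M)=0 ,
\]
so the compression of $M^{(n-1)}$ into $B\pi^{(n-2)}$ exists unconditionally; your projection-formula computation that $(u_0)_*=0$ on $H_{n-1}(M;\Z)$, while correct, detects only a quotient of this obstruction and is not what is needed. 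Inessentiality enters only to put $u(M)$ into $B\pi^{(n-1)}$ at the start and to push the top cells back into $B\pi^{(n-1)}$ after extending the compression homotopy by the homotopy extension property.
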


We postpone the proof of the following lemma to the end of the section.
\begin{lem}\label{coinvariants-mono}
For any group $\pi$ and a CW complex $B\pi$, for $n\ge 5$,  the  homomorphism induced by the quotient map
$$p_*:\pi_n(B\pi,B\pi^{(n-2)})\to \pi_n(B\pi/B\pi^{(n-2)})$$ factors through the group of coinvariants, $p_*=\bar p_*\circ q_*$,
$$\pi_n(B\pi,B\pi^{(n-2)})\stackrel{q_*}\to\pi_n(B\pi,B\pi^{(n-2)})_{\pi}\stackrel{\bar p_*}\to\pi_n(B\pi/B\pi^{(n-2)})$$ where $\bar p_*$ is injective.
\end{lem}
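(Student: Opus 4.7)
My plan is to split the claim into two parts: (a) $p_*$ is $\pi$-invariant and so descends to $\bar p_*$ on coinvariants, and (b) this $\bar p_*$ is injective. Part (a) is nearly tautological: the action of $g \in \pi = \pi_1(B\pi^{(n-2)})$ on a relative class $[\alpha] \in \pi_n(B\pi, B\pi^{(n-2)})$ is realized by dragging the basepoint of $\alpha$ along a loop $\gamma_g \subset B\pi^{(n-2)}$ representing $g$; after composition with the quotient map $p: B\pi \to B\pi/B\pi^{(n-2)}$, the loop $p \circ \gamma_g$ is constant at the collapsed basepoint, so $p_*(g\cdot\alpha) = p_*(\alpha)$, yielding the factorization $p_* = \bar p_* \circ q_*$.

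For part (b) I would first pass to the universal cover. For $n \ge 4$ the inclusion $B\pi^{(n-2)} \hookrightarrow B\pi$ is a $\pi_1$-isomorphism, so $\tilde A := E\pi^{(n-2)}$ is the universal cover of $B\pi^{(n-2)}$, and standard covering theory gives a $\pi$-equivariant isomorphism $\pi_n(B\pi, B\pi^{(n-2)}) \cong \pi_n(E\pi, E\pi^{(n-2)})$ with $\pi$ acting on the right by deck transformations. The pair $(E\pi, E\pi^{(n-2)})$ is now simply connected: $E\pi^{(n-2)}$ is $(n-3)$-connected as the $(n-2)$-skeleton of the contractible space $E\pi$, and the pair itself is $(n-2)$-connected. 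A Blakers-Massey argument for this simply connected pair then gives a $\pi$-equivariant isomorphism
\[
\pi_n(E\pi, E\pi^{(n-2)}) \cong \pi_n\bigl(E\pi/E\pi^{(n-2)}\bigr)
\]
precisely when $n \ge 5$ (which is where the hypothesis of the lemma is used). The induced $\pi$-action on $E\pi/E\pi^{(n-2)}$ fixes the collapsed basepoint and is free elsewhere, and its orbit space is canonically $B\pi/B\pi^{(n-2)}$.

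The residual task is to show that the map $\bar p_*: \pi_n(E\pi/E\pi^{(n-2)})_\pi \to \pi_n(B\pi/B\pi^{(n-2)})$ induced by this orbit projection is injective. The point is that the projection is a genuine free $\pi$-covering away from the basepoint, so nullhomotopies lift in a controlled way. Starting from a nullhomotopy $H: D^{n+1} \to B\pi/B\pi^{(n-2)}$ of $\bar p \circ \tilde\alpha$ for a representative $\tilde\alpha: S^n \to E\pi/E\pi^{(n-2)}$, one homotopes $H$ rel boundary so that $H^{-1}(*)$ shrinks to a single point (using that the basepoint is a $0$-cell of the target and that $D^{n+1}$ is simply connected), then lifts the restriction to the complement through the free cover. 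The resulting $\tilde H: D^{n+1} \to E\pi/E\pi^{(n-2)}$ restricts on $S^n$ to some $\pi$-translate of $\tilde\alpha$, so $[\tilde\alpha]$ vanishes in the coinvariants.

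The main obstacle is this lifting argument, which requires two points of care: (i) one must verify that every map (and every nullhomotopy) into $B\pi/B\pi^{(n-2)}$ can be homotoped so that the preimage of the basepoint is a single point, which I would do by a cellular/transversality argument using that $*$ has codimension at least two; and (ii) one must rule out that independent sheet choices on different components of $D^{n+1} \setminus H^{-1}(*)$ produce a lift differing from $\tilde\alpha$ by more than a single global deck transformation---this is exactly why the reduction in (i) to a connected complement is essential. A preliminary check is also required that the $\pi_1(A)$-action on the original relative homotopy matches the deck-transformation action after the covering isomorphism, which is a routine change-of-basepoint computation in $E\pi$.
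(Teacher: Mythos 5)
Your reduction steps are sound and match the beginning of the paper's argument: the factorization through coinvariants, the passage to the universal cover, and the Blakers--Massey identification $\pi_n(E\pi,E\pi^{(n-2)})\cong\pi_n(E\pi/E\pi^{(n-2)})$ (valid for $n\le 2n-5$, i.e.\ exactly $n\ge 5$) are all correct and correctly locate where the hypothesis is used. But the heart of the lemma is the injectivity of $\bar p_*$, and the covering-space lifting argument you propose for it has a genuine gap that I do not believe can be repaired. The reduction to ``$H^{-1}(*)$ is a single point'' is not merely a technical point of care: it is impossible in general. The basepoint of $B\pi/B\pi^{(n-2)}$ is the wedge point of the $(n-1)$-skeleton $\bigvee S^{n-1}$, and the complement of the wedge point in $\bigvee S^{n-1}$ is a disjoint union of contractible pieces; consequently a spheroid $S^n\to\bigvee S^{n-1}$ that misses the wedge point is null-homotopic, so an essential class (e.g.\ a suspension of $\eta$ on one summand) cannot be pushed off the basepoint even on the boundary sphere, let alone on the null-homotopy $H$. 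The preimage of $*$ carries essential homotopical information and cannot be shrunk to a point; moreover there is no transversality available in a CW target, so ``codimension at least two'' gives you nothing. Without that reduction, $D^{n+1}\setminus H^{-1}(*)$ need not be simply connected (nor connected), and the lifting step collapses.

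For comparison, the paper proves injectivity of $\bar p_*$ by a purely algebraic route: for $n\ge5$ all the groups in sight are stable, and the skeletal filtration gives exact sequences in $\pi_*^s$ in which the subquotients $\pi_n^s(E^{(k)}/E^{(k-1)})$ are direct sums of copies of $\Z\pi$ and $\Z_2\pi$ on which $p_*$ is literally the augmentation; after applying $-\otimes_{\Z\pi}\Z$ these become isomorphisms, and the pieces are reassembled using the Cartan--Eilenberg kernel--cokernel exact sequence and the five lemma. Some argument of this filtered/exact-sequence type is unavoidable, because the functor of coinvariants is only right exact, so injectivity cannot be obtained by a single geometric lifting; it has to be extracted degree by degree from the cell structure. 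If you want to keep a geometric flavor, you would still need to control the contribution of the $(n-1)$- and $n$-cells separately, which is exactly what the paper's diagrams with the kernels $H,\bar H$ and cokernels $K,\bar K$ accomplish.
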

We recall that for a $\pi$-module $M$ the group of coinvariants is $M\otimes_{\Z\pi}\Z$.
\begin{rem}
In the proof of Lemma 4.1~\cite{BD} it was stated erroneously that $\bar p_*$ is bijective. This has not affect on the proof of the lemma, since
the injectivity of $bar p_*$ is sufficient for that to work.
\end{rem}

\begin{thm}\label{obstruction} 
Let $M$ be an $n$-manifold with a CW complex structure with one top dimensional cell. Suppose that
a classifying map $u:M\to B\pi$  satisfies the condition $u(M^{(n-1)})\subset B\pi^{(n-2)}$ and let $\bar u:M/M^{(n-1)}=S^n\to B\pi/B\pi^{(n-2)}$ be the induced map.
Then the following are equivalent:

(1) There is a deformation of $u$ in $B\pi$ to a map $f:M\to B\pi^{(n-2)}$;

(2) $\bar u_*(1)=0$ in $\pi_n(B\pi/B\pi^{(n-2)})$ where $1\in\Z=\pi_n(S^n)$.
\end{thm}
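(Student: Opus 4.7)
The implication $(1) \Rightarrow (2)$ is immediate: given a homotopy $H: M\times I \to B\pi$ from $u$ to a map $f$ with $f(M)\subset B\pi^{(n-2)}$, composition with the quotient map $q:B\pi\to B\pi/B\pi^{(n-2)}$ collapses $M^{(n-1)}\times I$ to a point (both $u(M^{(n-1)})$ and $f(M)$ lie in $B\pi^{(n-2)}$), so $q\circ H$ descends to a homotopy on $M/M^{(n-1)} = S^n$ from $\bar u$ to the constant map. Hence $\bar u_*(1)=0$.

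For $(2)\Rightarrow (1)$, the plan is to identify the obstruction to the deformation with a class in $\pi_n(B\pi,B\pi^{(n-2)})$ and then apply \lemref{coinvariants-mono}. Let $\chi:(D^n,S^{n-1})\to (M,M^{(n-1)})$ be the characteristic map of the unique top cell of $M$. Then $u\circ\chi$ represents a class $\omega\in \pi_n(B\pi,B\pi^{(n-2)})$. The first key step is to show that condition (1) is equivalent to the vanishing of the coinvariant class $[\omega]_{\pi}\in \pi_n(B\pi,B\pi^{(n-2)})_{\pi}$. If $\omega=0$, classical obstruction theory yields a homotopy of $u\circ\chi$ rel $S^{n-1}$ into $B\pi^{(n-2)}$, which extends (using the cofibration $M^{(n-1)}\hookrightarrow M$) to a homotopy of $u$ into $B\pi^{(n-2)}$. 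More generally, the $\pi_1(B\pi)=\pi$ action on $\pi_n(B\pi,B\pi^{(n-2)})$ is realized by changing the basepoint path from $b_0$ to the image of a basepoint of $S^{n-1}$; via $u$ this corresponds to a loop in $M$ at $m_0$, and such a change of basepoint is absorbed by a preliminary homotopy of $u$ inside $B\pi$. Conversely, a deformation as in (1) induces, after choosing an appropriate basepoint path, a null-homotopy of $u\circ\chi$ as a map of pairs, killing $\omega$.

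The second key step is naturality: the quotient $\bar p:(B\pi,B\pi^{(n-2)})\to (B\pi/B\pi^{(n-2)},*)$ sends $\omega$ to the class of $\bar u\circ\bar\chi:D^n/S^{n-1}=S^n \to B\pi/B\pi^{(n-2)}$, which is exactly $\bar u_*(1)$. Since $\bar p$ factors through the coinvariants (the $\pi$-action becomes trivial after collapsing $B\pi^{(n-2)}$), condition (2) translates into $\bar p_*([\omega]_{\pi})=0$. By \lemref{coinvariants-mono}, $\bar p_*$ is injective, hence $[\omega]_{\pi}=0$, and the first step produces the required deformation.

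The main obstacle is the first step: carefully verifying that the coinvariant class $[\omega]_{\pi}$ is precisely the obstruction to the deformation in (1). This requires analyzing how a homotopy of $u$ in $B\pi$ (not rel $M^{(n-1)}$) can alter the relative class $\omega$, distinguishing the genuine indeterminacy (the $\pi$-action coming from rerouting the basepoint path in $M$ through the $1$-skeleton, which lifts to a homotopy of $u$ in $B\pi$) from the rigid portion that must be killed. Once this identification is secured, the theorem follows by combining it with the injectivity of $\bar p_*$ on the group of coinvariants provided by \lemref{coinvariants-mono}.
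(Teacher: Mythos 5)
Your overall architecture coincides with the paper's: both proofs reduce (2)$\Rightarrow$(1) to the injectivity of $\bar p_*$ on the group of coinvariants (Lemma~\ref{coinvariants-mono}), after identifying the obstruction to the deformation with the image of the top cell's class in $\pi_n(B\pi,B\pi^{(n-2)})_{\pi}$. The gap is precisely in what you call the ``first key step,'' and your sketch of it would not close as written. Vanishing of $[\omega]_{\pi}$ in the coinvariants means $\omega\in I(\pi)\cdot\pi_n(B\pi,B\pi^{(n-2)})$, i.e.\ $\omega=\sum_i(\gamma_i a_i-a_i)$ where the $a_i$ are \emph{arbitrary} elements of the module, not related to $\omega$. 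The indeterminacy you describe --- rerouting the basepoint path, which replaces $\omega$ by $\gamma\omega$ --- only sweeps out the $\pi$-orbit of $\omega$ and hence accounts only for differences of the special form $\gamma\omega-\omega$. To realize the general relation one must also change $u$ on the interiors of the $(n-1)$-cells of $M$; the resulting change of $\omega$ is governed by the difference-cochain formalism, and showing that these changes produce exactly the submodule $I(\pi)\cdot\pi_n(B\pi,B\pi^{(n-2)})$ (no more, no less) is equivalent to the computation $H^n(M;A)\cong A_{\pi}$ for a closed oriented $n$-manifold with one top cell.

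The paper handles this in one stroke: the obstruction to deforming $u$ into $B\pi^{(n-2)}$ is the primary (and, since $\dim M=n$, the only) obstruction class $o_u=[c_u]\in H^n(M;\pi_n(B\pi,B\pi^{(n-2)}))$, where $c_u=u_*$ on $\pi_n(M,M^{(n-1)})$, and Poincar\'e duality identifies $H^n(M;\pi_n(B\pi,B\pi^{(n-2)}))$ with $H_0(M;\pi_n(B\pi,B\pi^{(n-2)}))=\pi_n(B\pi,B\pi^{(n-2)})_{\pi}$, under which $o_u$ corresponds to $q_*u_*(1)=[\omega]_{\pi}$. So the missing ingredient in your argument is exactly this appeal to obstruction theory plus top-degree Poincar\'e duality with local coefficients (which is also where the tacit orientability of $M$ enters); once you replace your hands-on analysis of the indeterminacy by that identification, the rest of your proof --- the naturality square giving $\bar u_*(1)=\bar p_*q_*u_*(1)$ and the injectivity of $\bar p_*$ --- is the paper's proof.
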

\begin{proof}
The primary obstruction to deforming $u$ to $B\pi^{(n-2)}$ is defined by the cocycle
$$c_u=u_*:\pi_n(M,M^{(n-1)})\to\pi_n(B\pi,B\pi^{(n-2)})$$ with the cohomology class
$o_u=[c_u]\in  H^n(M;\pi_n(B\pi,B\pi^{(n-2)}))$. By  Poincar\'{e} duality $o_u$ is dual
to the homology class $PD(o_u)\in H_0(M;\pi_n(B\pi,B\pi^{(n-2)}))=\pi_n(B\pi,B\pi^{(n-2)})_{\pi}$
represented by $q_*u_*(1)$ where $$q_*:\pi_n(B\pi,B\pi^{(n-2)})\to\pi_n(B\pi,B\pi^{(n-2)})_{\pi}$$ 
is the projection onto the group of coinvariants and $$u_*:\pi_n(M,M^{(n-1)})=\Z\to\pi_n(B\pi,B\pi^{(n-2)})$$
is induced by $u$. We note that $\pi_n(B\pi,B\pi^{(n-2)})=\pi_n(E\pi,E\pi^{(n-2)})$. By 
Theorem~\ref{coinvariants-mono} the homomorphism
$\bar p_*$ is injective. Hence $\bar p_*q_*u_*(1)=0$ if and only if $o_u=0$.
The commutative diagram
$$
\begin{CD}
\pi_n(M,M^{(n-1)}) @>u_*>>\pi_n(B\pi,B\pi^{(n-2)}) @>q_*>> \pi_n(B\pi,B\pi^{(n-2)})\\
@V=VV @. @V{\bar p_*}VV\\
\Z @>=>>\pi_n(M/M^{(n-1)}) @>\bar u_*>>  \pi_n(B\pi/B\pi^{(n-2)})\\
\end{CD}
$$
implies that
$\bar u_*(1)=\bar p_*q_*u_*(1).$
\end{proof}

\subsection{{\it ko}-inessential manifolds} 
We recall that an orientable closed $n$-manifold $M$ is inessential if and only if $u_*([M])\ne 0$ where $[M]\in H_n(M;\Z)$ is a fundamental class
and $u:M\to B\pi$ is a classifying map. We call a closed spin $n$-manifold $M$ {\em $ko$-inessential} if $u_*([M]_{ko})=0$ in $ko_n(B\pi)$
where $ko_*$ denotes the real connective K-theory homology groups.

We need the following proposition proven in~\cite{BD} where $\pi^s_*$ is the stable homotopy theory.

\begin{prop}\label{pi-ko-iso}
The natural transformation $\pi_*^s(pt)\to ko_*(pt)$ induces an
isomorphism $\pi_{n}^s(K/K^{(n-2)})\to ko_{n}(K/K^{(n-2)})$ for any CW
complex $K$.
\end{prop}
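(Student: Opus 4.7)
The plan is to exploit the cofiber sequence of connective spectra
$$
S \longrightarrow ko \longrightarrow C,
$$
where $S$ is the sphere spectrum, the first arrow is the unit map, and $C$ denotes its cofiber. The natural transformation $\pi_*^s \to ko_*$ on homology theories is induced by this unit, so the proposition will follow once we verify that, when we smash the sequence with $X = K/K^{(n-2)}$, the contribution of $C$ vanishes in degrees $n$ and $n+1$.

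The first step is to show that $C$ is $3$-connected. The unit map induces an isomorphism on coefficients in degrees $0, 1, 2$: in degree $0$ both groups are $\Z$; in degree $1$ both are $\Z/2$ generated by the Hopf element $\eta$, which the ring map $S \to ko$ preserves; in degree $2$ both are $\Z/2$ generated by $\eta^2$. The long exact sequence of the cofibration then forces $\pi_i(C) = 0$ for $i \le 3$.

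Next, because $X = K/K^{(n-2)}$ has cells only in dimensions $\ge n-1$, the suspension spectrum $\Sigma^{\infty}X$ is $(n-2)$-connected. The standard estimate that the connectivity of the smash product of two connective spectra is at least one more than the sum of their connectivities yields that $X \wedge C$ is $(n+2)$-connected; in particular $\pi_n(X \wedge C) = \pi_{n+1}(X \wedge C) = 0$. Smashing the original cofiber sequence with $X$ produces $X \to X \wedge ko \to X \wedge C$, whose long exact sequence contains
$$
\pi_{n+1}(X \wedge C) \longrightarrow \pi_n^s(X) \longrightarrow ko_n(X) \longrightarrow \pi_n(X \wedge C).
$$
Both outer terms vanish, so the middle arrow — which by construction is the natural transformation in question — is an isomorphism.

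The only subtle point is the verification in degree $2$, where one needs the generator $\eta^2 \in \pi_2^s(pt)$ to map to the generator of $ko_2(pt)$; this is classical and follows from the fact that $S \to ko$ is a ring map together with the iso in degree $1$. Apart from that, the entire argument is a formal manipulation of cofiber sequences and the connectivity of smash products, and requires no new input beyond the already-quoted values of the low-dimensional coefficients of $S$ and $ko$.
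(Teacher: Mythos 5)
Your argument is correct. Note first that the paper does not actually prove this proposition; it is quoted from [BD], where the standard route is a comparison of Atiyah--Hirzebruch spectral sequences: since $K/K^{(n-2)}$ has cells only in dimensions $\ge n-1$, the $E_2$-terms contributing to total degree $n$ involve only the coefficient groups $\pi_q^s(pt)$ and $ko_q(pt)$ for $q\le 2$, where the unit map is an isomorphism. Your cofiber-sequence argument is an equivalent but cleaner packaging of the same input (the unit $S\to ko$ is highly connected), and it avoids any discussion of differentials or convergence for infinite complexes, which is a genuine advantage when $K$ is an arbitrary CW complex. One small inaccuracy: the isomorphisms on $\pi_i$ for $i=0,1,2$ alone give only that $C$ is $2$-connected together with $\pi_3(C)\cong\operatorname{coker}\bigl(\pi_3^s(pt)\to ko_3(pt)\bigr)$; to get $3$-connectivity as you claim you must also invoke $ko_3(pt)=0$ (true, but unstated). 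This does not affect the proof, since $2$-connectivity of $C$ already makes $X\wedge C$ $(n+1)$-connected, which kills both outer terms in your exact sequence.
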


\begin{thm}\label{2-obstruction}
A classifying map $u:M\to B\pi$ of an inessential closed spin
$n$-manifold, $n>3$, is homotopic to a map $f:M\to
B\pi^{(n-2)}$ if and only if $j_*u_*([M]_{ko})=0$ in $ko_n(B\pi,B\pi^{(n-2)})$
where $[M]_{ko}$ is a $ko$-fundamental class.
\end{thm}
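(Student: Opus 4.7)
The plan is to combine Proposition~\ref{BD}, Theorem~\ref{obstruction}, and Proposition~\ref{pi-ko-iso} to convert the unstable second obstruction into a $ko$-theoretic vanishing statement. First, I equip $M$ with a CW structure having a single top-dimensional cell, which is always possible via a handle decomposition. Since $M$ is inessential, Proposition~\ref{BD} furnishes a classifying map $u:M\to B\pi$ with $u(M^{(n-1)})\subset B\pi^{(n-2)}$, so $u$ descends to $\bar u:M/M^{(n-1)}=S^n\to B\pi/B\pi^{(n-2)}$. By Theorem~\ref{obstruction}, the existence of a deformation of $u$ into $B\pi^{(n-2)}$ is equivalent to the vanishing of $\bar u_*(1)\in\pi_n(B\pi/B\pi^{(n-2)})$, where $1\in\pi_n(S^n)=\Z$ is the generator.

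The second step is to pass from unstable homotopy to $ko$-homology. The quotient $B\pi/B\pi^{(n-2)}$ is $(n-2)$-connected, so Freudenthal's suspension theorem supplies an isomorphism
\[
\pi_n(B\pi/B\pi^{(n-2)})\cong\pi_n^s(B\pi/B\pi^{(n-2)})
\]
in the stable range $n\le 2(n-2)$, which is precisely where the hypothesis $n>3$ is used. Proposition~\ref{pi-ko-iso} then identifies this stable group with $ko_n(B\pi/B\pi^{(n-2)})$. Naturality of the composite transformation $\pi_n(-)\to\pi_n^s(-)\to ko_n(-)$ applied to $\bar u$ sends $\bar u_*(1)$ to $\bar u_*([S^n]_{ko})$, since the generator $1\in\pi_n(S^n)$ maps under this composite to the $ko$-fundamental class $[S^n]_{ko}\in ko_n(S^n)\cong\Z$. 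Therefore $\bar u_*(1)=0$ if and only if $\bar u_*([S^n]_{ko})=0$.

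Finally, I identify $\bar u_*([S^n]_{ko})$ with $j_*u_*([M]_{ko})$. Writing $q_M:M\to M/M^{(n-1)}=S^n$ for the collapse, the condition $u(M^{(n-1)})\subset B\pi^{(n-2)}$ gives the factorization $j\circ u=\bar u\circ q_M$, so
\[
j_*u_*([M]_{ko})=\bar u_*\bigl(q_{M*}[M]_{ko}\bigr).
\]
The step I expect to be the main technical obstacle is the naturality identity $q_{M*}[M]_{ko}=[S^n]_{ko}$. The cleanest route is to note that $ko_n(S^n)\cong H_n(S^n;\Z)\cong\Z$ (via the natural map $ko_*\to H_*(-;\Z)$, which on $S^n$ is an isomorphism of the two copies of $\Z$), and that both $q_{M*}[M]_{ko}$ and $[S^n]_{ko}$ have image equal to the integral fundamental class $[S^n]\in H_n(S^n;\Z)$: for the first, $q_M$ has degree one, since $M$ has a single top cell, and the $ko$-fundamental class of a spin manifold maps to its integral fundamental class under $ko_n\to H_n$; for the second, this is the defining property of $[S^n]_{ko}$. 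Combining these three steps yields the stated equivalence.
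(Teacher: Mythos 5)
Your proof is correct and follows essentially the same route as the paper: reduce to the vanishing of $\bar u_*(1)\in\pi_n(B\pi/B\pi^{(n-2)})$ via Proposition~\ref{BD} and Theorem~\ref{obstruction}, pass to stable homotopy by Freudenthal using $n>3$ and then to $ko$-homology by Proposition~\ref{pi-ko-iso}, and identify $\bar u_*(1)$ with $j_*u_*([M]_{ko})$ through the collapse map $M\to M/M^{(n-1)}=S^n$. Your justification that the collapse sends $[M]_{ko}$ to $[S^n]_{ko}$ (via the natural map to integral homology) is a slightly more careful version of a step the paper simply asserts.
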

\begin{proof}
By Proposition~\ref{BD} a classifying map $u$ can be chosen to satisfy the condition $u(M^{(n-1)})\subset B\pi^{(n-2)}$.
We show that $\bar u_*(1)=0$ if and only if $j_*u_*([M]_{ko})=0$ and apply Theorem~\ref{obstruction}.

The restriction $n>3$ implies that $\bar
u_*(1)$ survives in the stable homotopy group. In view of
Proposition~\ref{pi-ko-iso}, the element $\bar u_*(1)$ survives in
the composition
$$\pi_n(B\pi/B\pi^{(n-2)}) \to \pi_n^s(B\pi/B\pi^{(n-2)})\to
ko_n(B\pi/B\pi^{(n-2)}).$$

The commutative diagram
$$
\begin{CD}
\pi_n(S^n) @>\bar u_*>> \pi_n(B\pi/B\pi^{(n-2)})\\
@V\cong VV @V\cong VV\\
\pi_n^s(S^n) @>\bar u_*>> \pi_n^s(B\pi/B\pi^{(n-2)})\\
@V\cong VV @V\cong VV\\
ko_n(S^n) @>\bar u_*>> ko_n(B\pi/B\pi^{(n-2)})\\
\end{CD}
$$
implies that $\bar u_*(1)= 0$ for $ko_n$ iff and only if $\bar u_*(1)=0$ for $\pi_n$. 

From the diagram with the quotient map $\psi:M\to M/M^{(n-1)}=S^n$
$$
\begin{CD}
ko_n(M) @>u_*>>ko_n(B\pi)\\
@V{\psi_*}VV @V{j_*}VV\\
ko_n(S^n) @>\bar u_*>>ko_n(B\pi/B\pi^{(n-1)})\\
\end{CD}
$$
it follows that $j_*u_*([M]_{ko})=\bar u_*\psi_*([M]_{ko})=\bar u_*(1)$.
Thus, $j_*u_*([M]_{ko}0=0$ if and only if $\bar u_*(1)=0$ for $n$-dimensional homotopy groups.
\end{proof}

We recall that spin manifolds are exactly those that admit an orientation in real connective K-theory 
$ko_*$. For spin manifolds we prove the following criterion.
\begin{thm}\label{cat=n-1} 
For a closed spin $n$-manifold $M$ with
$\cat M\le\dim M-2$,  $$j_*u_*([M]_{ko})= 0$$ in $ko_n(B\pi,B\pi^{(n-2)})$ where  
$u:M\to B\pi$  classifies the universal cover of $M$
and $j:(B\pi,\emptyset)\to (B\pi,B\pi^{(n-2)})$ is the inclusion.

For a closed spin inessential $n$-manifold $M$ with $\pi_2(M)=0$,  
$\cat M\le\dim M-2$ if and only if  $j_*u_*([M]_{ko})= 0.$
\end{thm}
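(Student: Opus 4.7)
The plan is to treat the two implications separately and let the bulk of the work be done by \theoref{2-obstruction} and \theoref{space-map}, which are already available.

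First I would dispatch the unconditional direction. If $\cat M\le n-2$, then $\cat u\le\cat M\le n-2$, so by \propref{deform} a classifying map $u:M\to B\pi$ is homotopic to a map $f:M\to B\pi$ with $f(M)\subset B\pi^{(n-2)}$. Then $u_*([M]_{ko})=f_*([M]_{ko})$ lies in the image of $ko_n(B\pi^{(n-2)})\to ko_n(B\pi)$ and therefore vanishes under $j_*:ko_n(B\pi)\to ko_n(B\pi,B\pi^{(n-2)})$. The spin hypothesis is used only to ensure that the $ko$-fundamental class $[M]_{ko}$ is defined.

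For the second statement the forward implication is the first part, so I focus on the converse. Assume $M$ is spin and inessential with $\pi_2(M)=0$ and $j_*u_*([M]_{ko})=0$. Then \theoref{2-obstruction} produces a deformation of $u$ to a map $f:M\to B\pi^{(n-2)}$, and \propref{deform} translates this into $\cat u\le n-2$. To pass from $\cat u$ to $\cat M$, I invoke \theoref{space-map}. Since $\pi_1(\tilde M)=0$ and the covering projection is an isomorphism on $\pi_i$ for $i\ge 2$, the assumption $\pi_2(M)=0$ gives that $\tilde M$ is $2$-connected. Taking $m=2$ and $k=\cat u\le n-2$ in \theoref{space-map}, the hypothesis $\cat u+m\le n$ is precisely satisfied, and I conclude $\cat M=\cat u\le n-2$.

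I do not expect any substantive obstacle; the argument is essentially an assembly of earlier results. The only real subtlety is the arithmetic in \theoref{space-map}, where the target bound $n-2$ for $\cat u$ and the $2$-connectivity of $\tilde M$ (exactly what $\pi_2(M)=0$ buys) conspire so that $\cat u+m\le n$ holds with no slack to spare; dropping either the $\pi_2$ hypothesis or weakening the $ko$-homological vanishing would break this bookkeeping.
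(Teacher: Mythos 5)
Your argument is correct and follows essentially the same route as the paper: the converse direction is identical (Theorem~\ref{2-obstruction}, then Proposition~\ref{deform}, then Theorem~\ref{space-map} with $m=2$), and your forward direction merely packages the paper's Ganea-space deformation as an appeal to Proposition~\ref{deform} followed by exactness of the pair sequence, which is a slightly cleaner way to get the vanishing under $j_*$ than the paper's citation of Theorem~\ref{2-obstruction}.
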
 
\begin{proof}
The inequality $\cat M\le n-2$ implies that
the map $u$ has a lift $u'\to G_{n-2}(B\pi)$, $u=p^{\pi}_{n-2}u'$. Since $G_{n-2}(B\pi)$ 
is homotopy equivalent to an $(n-2)$-dimensional complex, $p^{\pi}_{n-2}$ can be deformed to 
$p':G_{n-2}(B\pi)\to B\pi^{(n-2)}$. Thus $u$ can be deformed to $B\pi^{(n-2)}$. By
Theorem~\ref{2-obstruction}  $j_*u_*([M]_{ko})\ne 0$ . 

Now let $\pi_2(M)=0$, and $j_*u_*([M]_{ko})=0$.  
By Theorem~\ref{2-obstruction} the map $u$ can be deformed to a map
$f:M\to B\pi^{(n-2)}$. By Proposition~\ref{deform}, $\cat(u)\le n-2$. Since $\pi_2(M)=0$, the universal covering of $M$
is 2-connected. By Theorem~\ref{space-map}, $\cat M\le n-2$.
\end{proof}

\begin{prop}\label{vanish}
Let $M=L_p^{m}\times L_{q}^{n}$, $m,n>2$, be given a $ko$-orientation for some relatively prime $p$ and $q$ and let
$u:M\to B\Z_{pq}$ be  a classifying map of its universal cover. Then $u_*([M]_{ko})= 0$. 
\end{prop}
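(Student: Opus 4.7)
The plan is to exploit the product decomposition $B\Z_{pq}\simeq B\Z_p\times B\Z_q$ coming from coprimality, together with the universal cover of each lens-space factor, to show that $u_*([M]_{ko})$ is simultaneously annihilated by $p$ and by $q$, and hence vanishes.

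First I would record that, since $\gcd(p,q)=1$, the classifying map factors as $u=u_p\times u_q$ with $u_p\colon L^m_p\to B\Z_p$ and $u_q\colon L^n_q\to B\Z_q$ the individual classifying maps. The $ko$-orientation on $M$ forces each factor to be spin: both lens spaces are orientable, so $w_2(M)=w_2(L^m_p)\times 1+1\times w_2(L^n_q)$, and the K\"unneth splitting of $H^2(M;\Z/2)$ forces each summand to vanish. Each factor therefore carries its own $ko$-fundamental class, and multiplicativity of the $ko$-orientation under products gives $[M]_{ko}=[L^m_p]_{ko}\times[L^n_q]_{ko}$. Naturality of the cross product then yields
$$u_*([M]_{ko})=a\times b,\qquad a:=u_{p*}[L^m_p]_{ko},\quad b:=u_{q*}[L^n_q]_{ko}.$$

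Next I would show that $p\cdot a=0$. Let $c\colon S^m\to L^m_p$ denote the $p$-sheeted universal cover, equipped with the pulled-back spin structure. Multiplicativity of the $ko$-fundamental class under finite coverings gives $c_*[S^m]_{ko}=p\,[L^m_p]_{ko}$ in $ko_m(L^m_p)$. On the other hand, $u_p\circ c\colon S^m\to B\Z_p$ is null-homotopic, since $m>2$ and $\pi_m(B\Z_p)=0$, so this composition factors through a point. Because $[S^m]_{ko}$ lies in the reduced summand $\widetilde{ko}_m(S^m)$, any pushforward that factors through a point annihilates it, whence
$$p\cdot a=u_{p*}(p\,[L^m_p]_{ko})=(u_p\circ c)_*[S^m]_{ko}=0.$$
Symmetrically $q\cdot b=0$.

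Finally, bilinearity of the external product gives $p(a\times b)=(pa)\times b=0$ and $q(a\times b)=a\times(qb)=0$. Choosing $\alpha,\beta\in\Z$ with $\alpha p+\beta q=1$ via B\'ezout, we conclude $a\times b=(\alpha p+\beta q)(a\times b)=0$, so $u_*([M]_{ko})=0$. The only step that really deserves care is the identity $c_*[S^m]_{ko}=p\,[L^m_p]_{ko}$, which relies on the compatibility of the pulled-back spin structure on $S^m$ with the covering orientation; this is standard but worth flagging. Once it is in hand, the remaining manipulations are formal.
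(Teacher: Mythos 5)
Your overall strategy (kill $u_*([M]_{ko})$ by showing the two factors contribute classes that are respectively $q$-divisible and $q$-torsion) is sound and close in spirit to the paper's, but the step you yourself flagged is exactly where the argument breaks: the identity $c_*[S^m]_{ko}=p\,[L^m_p]_{ko}$ is not a standard multiplicativity fact and is false in general. For a finite covering $c\colon \widetilde X\to X$ of spin manifolds the correct statement is $c_*[\widetilde X]_{ko}=\mathrm{tr}_c(1)\frown [X]_{ko}$, where $\mathrm{tr}_c(1)\in ko^0(X)$ is the transfer of the unit; for the $\Z_p$-covering $S^m\to L^m_p$ this class is the image of the regular representation, i.e.\ $p+\widetilde\rho$ with $\widetilde\rho\in\widetilde{ko}^0(L^m_p)$ the reduced regular representation. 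In ordinary homology the correction term disappears because $\widetilde H^0=0$, which is why $c_*[S^m]=p[L^m_p]$ holds there; but $\widetilde{ko}^0(L^m_p)$ is far from zero, $\widetilde\rho$ is generically nonzero in it (e.g.\ for $S^n\to\R P^n$ one gets $\mathrm{tr}(1)=2+\lambda$ with $\lambda=[L]-1\neq 0$, and for $L^{2k-1}_p$ with $k$ large the class $\binom{p}{2}x+\dots+x^{p-1}$, $x=\sigma-1$, survives), and capping with $[L^m_p]_{ko}$ is a Poincar\'e duality isomorphism, so the correction term $\widetilde\rho\frown[L^m_p]_{ko}$ does not vanish. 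What your covering argument actually yields, after the projection formula, is $(p+\widetilde\rho_{B})\frown a=0$ rather than $pa=0$; one can still bootstrap from this to ``$a$ is $p$-power torsion'' (the correction lies in the $p$-power-torsion part), but that requires the transfer machinery you did not set up, and as written the step is simply wrong. The paper avoids coverings entirely: it uses that $\widetilde{ko}_*(B\Z_p)$ is $p$-power torsion (Atiyah--Hirzebruch), hence $ko_m(B\Z_p)$ is $q$-divisible while $ko_n(B\Z_q)$ is $q$-torsion, and a cross product of a $q$-divisible class with a $q$-torsion class is zero.

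A secondary gap: the proposition assumes an arbitrary $ko$-orientation on $M$, not the product of orientations on the factors. Two $ko$-fundamental classes differ by cap product with a unit, and the paper accordingly writes $[M]_{ko}=(1+v)\bigl([L^m_p]_{ko}\times[L^n_q]_{ko}\bigr)$ with $v\in\widetilde{ko}^0(M)$ and reduces to the vanishing of the cross-product term. Your assertion $[M]_{ko}=[L^m_p]_{ko}\times[L^n_q]_{ko}$ silently picks the product orientation; to prove the statement as given you must also dispose of the correction $u_*\bigl(v\frown([L^m_p]_{ko}\times[L^n_q]_{ko})\bigr)$. The Künneth/Whitney argument that each factor is spin, the null-homotopy of $u_p\circ c$, the observation that $[S^m]_{ko}$ is reduced, and the final B\'ezout step are all fine.
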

\begin{proof}
Note that $[M]_{ko}=(1+v)([L_p^m]_{ko}\times[L^n_q]_{ko})$ where $v\in \tilde {ko^*}(M)$ is from the reduced cohomology
$ko$-theory and the product is the cap product~\cite{R2}.
Therefore it suffices to show that $u^p_*([L_p^m]_{ko})\times u^q_*([L^n_q]_{ko})=0$ where $u^p:L^m_p\to B\Z_p$ and
$u^q:L^n_q\to B\Z_q$ are  classifying maps.
This equality follows from the fact that $ko_m(B\Z_p)$ is $q$-divisible and $ko_n(B\Z_q)$  is a $q$-torsion group.
\end{proof}
\begin{cor}\label{cor} 
For $m,n>2$ and odd relatively prime $p$ and $q$ or for $ p$ odd and $q$ even with $n=2k-1$ for even $k$ we have
$$\cat(L^m_p\times L^n_q)\le m+n-2.$$
\end{cor}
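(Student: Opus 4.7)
The plan is to deduce the bound from Theorem~\ref{cat=n-1}, combined with Proposition~\ref{vanish} which already supplies the vanishing of the $ko$-obstruction. Four hypotheses must be checked for $M = L^m_p \times L^n_q$: that $M$ is spin, that $M$ is inessential, that $\pi_2(M) = 0$, and that $j_*u_*([M]_{ko}) = 0$ in $ko_{m+n}(B\Z_{pq},B\Z_{pq}^{(m+n-2)})$.

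Two of these are immediate. Inessentiality was already observed in the paper as a consequence of $\Z_p \otimes \Z_q = 0$ for coprime $p,q$. The condition $\pi_2(M) = 0$ holds because the universal cover of $M$ is $S^m \times S^n$, which is $2$-connected as soon as $m,n > 2$. Once $M$ is known to be spin (so that $[M]_{ko}$ is defined), Proposition~\ref{vanish} gives $u_*([M]_{ko}) = 0$ in $ko_{m+n}(B\Z_{pq})$, and hence $j_*u_*([M]_{ko}) = 0$ a fortiori.

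The substantive step is verifying that $M$ is spin under each of the two parity hypotheses. For odd $p$ the cohomology $H^*(B\Z_p; \Z/2)$ is trivial in positive degrees, so $L^m_p$ is a $\Z/2$-homology sphere and all its Stiefel--Whitney classes vanish; in particular $L^m_p$ is spin. The identical argument handles $L^n_q$ when $q$ is odd, which disposes of the first case. In the second case, with $q$ even, I would exploit the stable splitting $TL^n_q \oplus \epsilon^1 \cong \bigoplus_{j=1}^{k} \eta_j$, where $\eta_j$ is the oriented real $2$-plane bundle underlying the complex line bundle of weight $\ell_j$; passing to Stiefel--Whitney classes yields $w_2(L^n_q) = \bigl(\sum_{j=1}^k \ell_j\bigr)\bar c$ in $H^2(L^n_q;\Z/2)$, where $\bar c$ is the mod-$2$ generator. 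Since $q$ is even and $\gcd(\ell_j,q) = 1$ forces each $\ell_j$ to be odd, this sum is congruent to $k$ mod $2$, so $L^n_q$ is spin precisely when $k$ is even, matching the hypothesis.

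With all four hypotheses verified, Theorem~\ref{cat=n-1} produces the desired inequality $\cat M \le m+n-2$. The main obstacle in this plan is the mixed-parity spin computation of the second case, since it requires correctly identifying $w_2$ using the Chern classes of the weight bundles and noting that the coprimality condition forces the weights to be odd; the remaining steps are bookkeeping against results already established in the paper.
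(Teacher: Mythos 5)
Your proof is correct and follows essentially the same route as the paper: verify that $M=L^m_p\times L^n_q$ is spin, inessential, and has $\pi_2(M)=0$, invoke Proposition~\ref{vanish} for the vanishing of $u_*([M]_{ko})$, and conclude via Theorem~\ref{cat=n-1}. The only difference is that you verify the spin condition directly (the $\Z/2$-homology-sphere argument for odd order and the $w_2=k\bar c$ computation for even $q$), where the paper simply cites Franc; your computation is correct and matches that reference.
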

\begin{proof}
In this case the lens spaces are spin~\cite{Fr} and we can apply Proposition~\ref{vanish}. Then Theorem~\ref{cat=n-1} and the fact that $\pi_2(L_p^{m}\times L_{q}^{n})=0$
imply the result.
\end{proof}

For $m=n=3$ we obtain a different proof of Corollary~\ref{3}:
\begin{cor} 
$\cat(L^3_p\times L^3_q)=4$ for all relatively prime $p$ and $q$.
\end{cor}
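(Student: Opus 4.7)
The plan is to deduce the corollary as an immediate consequence of the already-established Corollary~\ref{cor} combined with the lower bound $(*)$. The lower bound $\cat(L^3_p\times L^3_q)\ge n+1 = 4$ is given directly by $(*)$ with $n=3$. So the only task is to produce the matching upper bound $\cat(L^3_p\times L^3_q)\le 4$, and for this I will verify that every pair of relatively prime $p,q$ falls under at least one of the hypotheses of Corollary~\ref{cor} when $m=n=3$.

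Since $(p,q)=1$, at most one of $p,q$ is even. I split into cases. If both $p$ and $q$ are odd, then I am in the first clause of Corollary~\ref{cor} (``odd relatively prime $p$ and $q$''), which yields $\cat(L^3_p\times L^3_q)\le 3+3-2 = 4$. If exactly one of $p,q$ is even, then after swapping factors if necessary (which is harmless since $L^3_p\times L^3_q\cong L^3_q\times L^3_p$ as spaces) I may assume $p$ is odd and $q$ is even; the second clause of Corollary~\ref{cor} then requires the factor $L^n_q$ to satisfy $n = 2k-1$ with $k$ even. For $n=3$ this gives $k=2$, which is indeed even, so the clause applies and again yields the bound $4$. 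The case $p,q$ both even does not arise under $(p,q)=1$.

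There is no real obstacle here: the entire content is the observation that the dimension $3=2\cdot 2-1$ has the parity of $k$ that makes the $ko$-orientability hypothesis of Corollary~\ref{cor} hold whenever one of $p,q$ is even, so no pair of relatively prime $p,q$ is excluded. Combining $\cat\le 4$ with $\cat\ge 4$ gives the asserted equality.
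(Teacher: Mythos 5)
Your proof is correct and is essentially the paper's own argument: the paper offers no separate proof, deriving the corollary immediately from Corollary~\ref{cor} with $m=n=3$ (where $k=2$ is even, so both parity cases for $p,q$ are covered) together with the lower bound from $(*)$. Your case analysis simply makes explicit what the paper leaves implicit.
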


\subsection{Coinvariants} The following lemma can be found in~[CE, Lemma 3.3].
\begin{lemma}\label{Lemma}
A commutative diagram with exact rows
$$
\begin{CD}
@. A' @>>> A @>>> A'' @>>> 0\\
@. @Vf'VV @VfVV @Vf''VV\\
0 @>>> C' @>>> C @>>> C'' @.\\
\end{CD}
$$
defines an exact sequence
$$
ker(f')\to ker(f)\to ker(f'')\to coker(f')\to coker(f)\to coker(f'').
$$
\end{lemma}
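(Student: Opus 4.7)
The plan is to establish this via the classical snake-lemma diagram chase, tailored to the precise hypotheses given: the top row is exact at $A$ and $A''$ (so $A\to A''$ is surjective), while the bottom row is exact at $C'$ and $C$ (so $C'\to C$ is injective). No assumption is made at the outer ends, which is why the resulting six-term sequence has no $0$ at either end. These are exactly the ingredients the chase requires, and indeed the statement is the one cited as [CE, Lemma~3.3].

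First I would define the five connecting arrows. Four of them are automatic: the top horizontal maps restrict to $\ker f'\to\ker f\to\ker f''$ by commutativity of the squares, and the bottom horizontal maps induce $\coker f'\to\coker f\to\coker f''$ on quotients, well-defined because $\im f'$ is carried into $\im f$ under $C'\to C$ (and analogously at the next step). The essential construction is the connecting map $\delta\colon\ker f''\to\coker f'$. Given $a''\in\ker f''$, use surjectivity of $A\to A''$ to pick a lift $a\in A$; by commutativity, the image of $f(a)$ in $C''$ equals $f''(a'')=0$, so by exactness at $C$ the element $f(a)$ is the image of some $c'\in C'$, which is moreover unique by injectivity of $C'\to C$. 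Set $\delta(a''):=[c']\in\coker f'$. Two different lifts of $a''$ differ by an element coming from $A'\to A$, whose contribution to $c'$ lies in $\im f'$ and hence vanishes modulo $\im f'$; this proves $\delta$ is well-defined. $\Z$-linearity is automatic.

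It then remains to verify exactness at each of the four middle terms. Exactness at $\ker f$ and at $\coker f$ follows immediately from exactness of the two rows together with commutativity of the squares. Exactness at $\ker f''$ and at $\coker f'$ is where $\delta$ enters, and each is settled by unwinding the definition of $\delta$ together with one application of surjectivity of $A\to A''$ or of injectivity of $C'\to C$. For example, at $\ker f''$: if $a''$ comes from $\ker f$, then $a\in\ker f$ itself is a valid lift, so $f(a)=0$ forces $c'=0$ and hence $\delta(a'')=0$; conversely, if $\delta(a'')=0$, then $c'\in\im f'$, and subtracting off a preimage adjusts $a$ to a lift lying in $\ker f$. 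The other exactness statements are similarly short. The only obstacle is keeping the bookkeeping clean; there is no conceptual difficulty, and one may alternatively just invoke the cited snake lemma directly.
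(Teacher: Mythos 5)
Your proof is correct: it is the standard snake-lemma diagram chase, with the connecting map $\ker f''\to\operatorname{Coker} f'$ constructed and shown well-defined exactly as required by the given hypotheses (surjectivity of $A\to A''$, injectivity of $C'\to C$, no assumptions at the outer ends). The paper itself gives no proof at all --- it simply cites Cartan--Eilenberg, Lemma 3.3 --- so your write-up supplies precisely the argument that citation points to, and the remaining exactness verifications you defer are indeed routine.
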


Let $p:E\pi\to B\pi$ be the universal covering. Thus $p$ is the projection onto the orbit space of
a free cellular $\pi$-action. Below we use the following abbreviations: $\pi=\pi_1(B)$, $B=B\pi$ and $E=E\pi$. 

\begin{prop}\label{n-1}
$p_*:\pi_n(E/E^{n-1})\to \pi_n(B/B^{n-1})$ is an epimorphism.
\end{prop}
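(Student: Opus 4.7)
The plan is to exploit the fact that collapsing the $(n{-}1)$-skeleton turns every $n$-cell attaching map into a constant map, making the $n$-skeleton of each quotient a wedge of $n$-spheres. Writing $X=E/E^{(n-1)}$ and $Y=B/B^{(n-1)}$, the induced CW structures have a single $0$-cell (the image of the collapsed subcomplex) together with one $k$-cell for each $k$-cell of the ambient complex with $k\ge n$; the $n$-cells are glued to the basepoint by constant maps, because the original attaching maps $S^{n-1}\to E^{(n-1)}$ and $S^{n-1}\to B^{(n-1)}$ factor through the collapsed subcomplex. Thus $X^{(n)}=\bigvee_{\alpha\in E^{(n)}}S^n_\alpha$ and $Y^{(n)}=\bigvee_{\beta\in B^{(n)}}S^n_\beta$ as based CW complexes.

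Next I would analyze the induced map $\bar p\colon X\to Y$. Since $p\colon E\to B$ is a cellular covering projection, each open $n$-cell of $E$ is carried homeomorphically onto an open $n$-cell of $B$; the deck group $\pi$ permutes the $n$-cells in each fibre freely and transitively, so every $n$-cell of $B$ is hit. Restricting $\bar p$ to $n$-skeleta therefore yields a map of wedges of $n$-spheres that sends $S^n_\alpha$ identically onto $S^n_{p(\alpha)}$. For $n\ge 2$ the Hurewicz theorem identifies $\pi_n$ of each such wedge with the free abelian group on the indexing set of $n$-cells, and the surjectivity of the map of indexing sets $E^{(n)}\to B^{(n)}$ translates directly into surjectivity of $\bar p_*\colon\pi_n(X^{(n)})\to\pi_n(Y^{(n)})$.

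Finally I would pass from the $n$-skeleta to the full spaces. By cellular approximation the inclusion of the $n$-skeleton induces a surjection on $\pi_n$ for every CW complex, so there are surjections $\pi_n(X^{(n)})\twoheadrightarrow\pi_n(X)$ and $\pi_n(Y^{(n)})\twoheadrightarrow\pi_n(Y)$ that fit into the obvious commutative square with $\bar p_*$ on both rows. Since three sides of this square (the top horizontal and both verticals) are surjective, the remaining side $p_*\colon\pi_n(X)\to\pi_n(Y)$ must be surjective as well, which is the desired conclusion. The only real point requiring care is confirming that the $n$-skeleta of the quotients take the simple wedge-of-spheres form, but this is forced by the choice to collapse precisely the $(n{-}1)$-skeleton; the edge cases $n=0,1$ are handled by the analogous argument using trivial or free groups on the $n$-cells.
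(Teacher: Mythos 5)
Your proof is correct and follows essentially the same route as the paper's: both identify the generators of $\pi_n(B/B^{(n-1)})$ with the spheres of the wedge $B^{(n)}/B^{(n-1)}$ and lift each one through the covering to a spheroid in $E/E^{(n-1)}$. Your write-up merely makes explicit the skeleton/Hurewicz bookkeeping that the paper leaves implicit.
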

\begin{proof}
Note that $\pi_n(B/B^{n-1})$ is generated by the spheres from the wedge of $n$-spheres $B^n/B^{n-1}=\vee S^n$.
For every such generator $\sigma^n/\partial\sigma^n$ for any lift $\bar\sigma^n$ of the $n$-simplex $\sigma^n$,
the quotient $\bar\sigma^n/\partial\bar\sigma^n$ defines a spheroid which is mapped to $\sigma^n/\partial\sigma^n$.
\end{proof}
Recall that $\pi_*^s$  denotes the stable homotopy groups.
\begin{cor}\label{epi} 
For $n\ge 5$ the induced homomorphism
$$p_*':\pi_n^s(E,E^{n-1})\to \pi_n^s(B,B^{n-1})$$ is an epimorphism.
\end{cor}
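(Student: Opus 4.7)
The plan is to reduce the statement directly to \propref{n-1} by showing that stabilization is an isomorphism on both sides. Under the standard identification $\pi_n^s(X,A)\cong \tilde\pi_n^s(X/A)$ for a CW pair $(X,A)$, the map $p_*'$ becomes the stable homomorphism $\pi_n^s(E/E^{n-1})\to\pi_n^s(B/B^{n-1})$ induced by the quotient map $\bar p\colon E/E^{n-1}\to B/B^{n-1}$. I will compare this with the unstable map from \propref{n-1} through the commutative square
\[
\begin{CD}
\pi_n(E/E^{n-1}) @>p_*>> \pi_n(B/B^{n-1}) \\
@VVV @VVV \\
\pi_n^s(E/E^{n-1}) @>p_*'>> \pi_n^s(B/B^{n-1})
\end{CD}
\]
whose top arrow is an epimorphism by \propref{n-1}.

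The decisive observation is that both quotient complexes are $(n-1)$-connected. Indeed, the cells of $E$ of dimension at most $n-1$ are precisely those in $E^{n-1}$, and after their collapse to a point the resulting CW structure on $E/E^{n-1}$ has only the collapse $0$-cell together with cells of dimension $\ge n$, which forces $(n-1)$-connectedness; the same argument applies verbatim to $B/B^{n-1}$. The Freudenthal suspension theorem applied to an $(n-1)$-connected space $Y$ then shows that the stabilization $\pi_n(Y)\to\pi_n^s(Y)$ is an isomorphism whenever $n\le 2(n-1)$, which holds for all $n\ge 2$ and in particular throughout the range $n\ge 5$. Hence both vertical arrows in the square above are isomorphisms, and the surjectivity of the top arrow is transferred to $p_*'$ by an immediate diagram chase.

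I do not expect a substantial obstacle here: the whole argument is essentially bookkeeping, with the two ingredients being the identification $\pi_n^s(X,A)\cong \tilde\pi_n^s(X/A)$ for a CW pair and the Freudenthal range for $(n-1)$-connected spaces. The only items that need to be checked carefully are that the quotient CW structures really do have no cells strictly between dimensions $0$ and $n$, and that Freudenthal is applied in the correct range. The hypothesis $n\ge 5$ is considerably stronger than what the Freudenthal step itself requires; it presumably appears so that the corollary can be combined with earlier results such as \lemref{coinvariants-mono}, which themselves need $n\ge 5$.
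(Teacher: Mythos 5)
Your proof is correct and follows essentially the same route as the paper: the paper simply asserts the ``obvious natural isomorphisms'' $\pi_n^s(E,E^{n-1})\cong\pi_n^s(E/E^{n-1})\cong\pi_n(E/E^{n-1})$ (and likewise for $B$) and invokes \propref{n-1}, which is exactly the identification-plus-Freudenthal argument you spell out in detail. Your version is if anything slightly cleaner, since you apply \propref{n-1} directly to the quotient spaces and never need the unstable relative group $\pi_n(E,E^{n-1})$ that appears in the paper's chain of isomorphisms.
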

\begin{proof}
This follows from the obvious natural isomorphims
$$\pi_n(E/E^{n-1})=\pi_n^s(E/E^{n-1})=\pi_n(E,E^{n-1})$$ and $$\pi_n(B/B^{n-1})=\pi_n^s(B/B^{n-1})=\pi_n(B,B^{n-1}).$$ 
\end{proof}

\subsection{Proof of Theorem~\ref{coinvariants-mono}}
For $n\ge 5$,  the induced homomorphism 
$$p_*:\pi_n(B,B^{(n-2)})\to \pi_n(B/B^{(n-2)})$$ factors through the group of coinvariants, $p_*=\bar p_*\circ q_*$,
$$\pi_n(B,B^{(n-2)})\stackrel{q_*}\to\pi_n(B,B^{(n-2)})_{\pi}\stackrel{\bar p_*}\to\pi_n(B/B^{(n-2)})$$ where $\bar p_*$ is injective.

Note that for $n\ge 5$, $$\pi_n(B,B^{n-2})=\pi_n(E,E^{n-2})=\pi_n^s(E,E^{n-2}),\ \ \ \pi_n(B/B^{n-2})=\pi_n^s(B,B^{n-2}),$$
and the composition $$\pi_n(B,B^{(n-2)})\stackrel{q_*}\to\pi_n(B,B^{(n-2)})_{\pi}\stackrel{\bar p_*}\to\pi_n(B/B^{(n-2)})$$
coincides with 
$$\pi_n^s(E,E^{(n-2)})\stackrel{q_*}\to\pi_n^s(E,E^{(n-2)})_{\pi}\stackrel{\bar p_*}\to\pi_n^s(B,B^{(n-2)})$$
where $$\bar p_*\circ q_*=p_*:\pi_n^s(E,E^{(n-2)})\to \pi_n^s(B,B^{(n-2)})$$ is the homomorphism induced by the projection $p$.

Also note that $\pi_*^s(E,E^i)$ inherits a $\pi$-module structure via the $\pi$-action.

We extract from  the diagram generated by $p$ and exact  $\pi^s_*$-homology sequence of the triple $(E^n,E^{n-1},E^n)$ the following two diagrams:
$$
\begin{CD}
\pi_{n+1}^s(E^n,E^{n-1})  @>\bar j_{n+1}>> \pi^s_n(E^{n-1},E^{n-2}) @>>> \bar K   @>>> 0\\ 
@V{p_*^1}VV @V{p_*^2}VV @V{\alpha}VV \\
\pi^s_{n+1}(B^n,B^{n+1}) @>j_{n+1}>> \pi^s_n(B^{n-1},B^{n-2}) @>>> K @>>> 0\\
\end{CD}
$$
where $K,\bar K$ are cokernels of $j_{n+1},\bar j_{n+1}$ 
and
$$
\begin{CD}
0 @>>>\bar H @>>>\pi^s_n(E^n,E^{n-1})
@>\bar j_n>> \pi^s_{n-1}(E^{n-1},E^{n-2})\\
@. @V{\beta}VV @V{p_*^3}VV @V{p_*^4}VV \\
0 @>>>H @>>>\pi^s_n( B^n,B^{n-1})
@>j_n>> \pi^s_{n-1}(B^{n-1},B^{n-2})\\
\end{CD}
$$
where $H$ and $\bar H$ are kernels of $j_n$ and $\bar j_n$.
Note that the homomorphisms $p^3_*$ and $p^4_*$ are the direct sums of the augmentation homomorphism $$\epsilon:\Z\pi\to\Z.$$
The homomorphisms $p_*^1$ and $p_*^2$ are  direct sums of the mod 2 augmentation homomorphisms $$\bar\epsilon:\Z_2\pi\to\Z_2.$$
Also note that $p_*^i\otimes_{\pi}1_{\Z}$ is an isomorphism for $i=1,2,3,4$. 
Taking the tensor product of the first diagram with $\Z$ over $\Z\pi$ would give a commutative diagram with two left vertical arrows isomorphism. Then by the Five Lemma
$\alpha'=\alpha\otimes_{\pi}1_{\Z}$ is an isomorphism.

\

We argue that $\beta'=\beta\otimes_{\pi}1_{\Z}$ is a monomorphism. 
Note that $\ker(\beta)\subset \ker(p^3_*)=\oplus I(\pi)$ where
$I(\pi)$ is the augmentation ideal.

{\it Claim:} $\ker(\beta)\otimes_{\pi}\Z=0$. 

{\it Proof}: We show that $x\otimes_{\pi} 1=0$ for all $x\in L$. Let $x= \sum x_i$, $x_i\in I(\pi)$. It suffices to show that $x_i\otimes_{\pi}1=0$
for all $x_i$. Note that $x_i=\sum n_j(\gamma_j-e)$, $\gamma_j\in \pi$, $n_J\in\Z$. Note that $(\gamma-e)\otimes_{\pi} 1=0$ since
 $$(\gamma-e)\otimes_{\pi} 1=\gamma\otimes_{\pi} 1- e\otimes_{\pi} 1= \gamma(e\otimes_{\pi} 1)- e\otimes_{\pi} 1= e\otimes_{pi} \gamma(1)- e\otimes_{\pi} 1=0.$$

The tensor product with $\Z$ over $\Z\pi$ of the exact sequence
$$
\ker(\beta)\to \bar H\to \im(\beta)\to 0$$
implies that $$\beta_0=\beta\otimes id:\bar H\otimes_{\pi}\Z=\bar H_{\pi}\to \im(\beta)\otimes_{\pi}\Z=\im(\beta)$$ is an isomorphism.
Then $\beta'$ is a monomorphism as the composition of an isomorphism $\beta_0$ and the inclusion $\im(\beta)\to H$.

\

We consider  the diagram of short exact sequences:
$$
\begin{CD}
0 @>>> \bar K @>\bar\phi>> \pi^s_n(E^n,E^{n-2}) @>\bar\psi>> \bar H @>>> 0\\
@. @V{\alpha}VV @Vp_*VV @V{\beta}VV @.\\
0 @>>> K @>\phi>> \pi^s_n(B^n,B^{n-2}) @>\psi>> H @>>> 0\\
\end{CD}
$$

Then we apply the tensor product with $\Z$ over $\Z\pi$ to this diagram to obtain the following
commutative diagram with exact rows
$$
\begin{CD}
 @. \bar K_{\pi} @>\bar\phi>> \pi^s_n(E^n,E^{n-2})_{\pi} @>\bar\psi>> \bar H_{\pi} @>>> 0\\
@. @V{\alpha'}VV @V{\tilde p_*}VV @V{\beta'}VV @.\\
0 @>>> K @>\phi>> \pi^s_n(B^n,B^{n-2}) @>\psi>> H .\\
\end{CD}
$$

Lemma~\ref{Lemma} implies tat $\tilde p_*$ is a monomorphism.

\

Next we consider the diagram generated by  the triples $(E,E^n,E^{n-2})$ and
$(B,B^n,B^{n-2})$, 
$$
\begin{CD}
\pi^s_{n+1}(E,E^n) @>>>\pi_n^s(E^n, E^{n-2}) @>>> \pi_n^s(E,E^{n-2}) @>>> 0\\
@VVV @VVV @VVV @.\\
\pi^s_{n+1}(B,B^n) @>>>\pi_n^s(B^n, B^{n-2}) @>>> \pi_n^s(B,B^{n-2}) @>>> 0\\
\end{CD}
$$
and
tensor it with $\Z$ over $\Z\pi$ to obtain the following commutative diagram with exact rows
$$
\begin{CD}
\pi^s_{n+1}(E,E^n)_{\pi} @>>>\pi_n^s(E^n, E^{n-2})_{\pi} @>>> \pi_n^s(E,E^{n-2})_{\pi} @>>> 0\\
@Vp'_*VV @V\tilde p_*VV @V\bar p_*VV @.\\
\pi^s_{n+1}(B,B^n) @>>>\pi_n^s(B^n, B^{n-2}) @>>> \pi_n^s(B,B^{n-2}) @>>> 0\\
\end{CD}
$$
Since $p'_*$ is an epimorphism (see Corollary~\ref{epi}) and $\tilde p_*$  is a monomorphism by  the monomorphism version of
the Five Lemma we obtain that $\bar p_*$ is a monomorphism. \qed

\section{On the category of the sum}

The following theorem was proven by R. Newton 
under the assumption that $\cat M,\cat N>2$~\cite{N}. 
\begin{thm}\label{th1}
For closed manifolds $M$ and $N$ there is the inequality,
$$
\cat(M\# N)\le\max\{\cat M,\cat N\}.
$$
\end{thm}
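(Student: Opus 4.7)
The plan is, given $k = \max\{\cat M, \cat N\}$, to exhibit an open cover of $M\# N$ by $k+1$ subsets each contractible in $M\# N$. I would first fix categorical covers $\{U_0, \dots, U_k\}$ of $M$ and $\{V_0, \dots, V_k\}$ of $N$ (padding with repeats if $\cat M$ or $\cat N < k$), arranged so that $U_0$ and $V_0$ are open $n$-balls about the respective connect-sum points. Realize $M\# N = (M\setminus B_M)\cup_{S^{n-1}}(N\setminus B_N)$ for smaller concentric balls $B_M\subset U_0$, $B_N\subset V_0$, so that the punctured sets $U_0\setminus\bar B_M$ and $V_0\setminus\bar B_N$ are collars of the gluing sphere.

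Next I would propose as cover of $M\# N$ the family
\[
W_i \;=\; (U_i\setminus\bar B_M)\;\sqcup\;(V_i\setminus\bar B_N),\qquad 0\le i\le k,
\]
consisting of $k+1$ open sets. That $\{W_i\}$ covers $M\# N$ is immediate, and each $W_i$ is a disjoint union of two pieces living on opposite sides of the gluing sphere. The substance of the proof is to show each $W_i$ is contractible in $M\# N$; since $M\# N$ is path-connected and each $W_i$ has two components, it suffices to show each component is null-homotopic in $M\# N$. The piece $U_i\setminus\bar B_M$ contracts in $M$ via the given homotopy $h_i:U_i\times I\to M$; the issue is that the image of $h_i$ may pass through the removed ball $B_M$. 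One must reroute any portion of the image that enters $B_M$ through the replacement piece $N\setminus B_N\subset M\# N$, using the contractibility of $B_M$ together with the degree-one collapse $c_M:M\# N\to M$ which is the identity on $M\setminus\bar B_M$ and crushes $N\setminus B_N$ into $\bar B_M$.

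The main obstacle is precisely this rerouting, for which a direct general-position argument is insufficient: the contracting homotopy of an open subset of $M$ has $(n+1)$-dimensional image, so transversality alone cannot push it off the codimension-zero ball $B_M$. The cleanest way around this is through the Ganea--Schwarz viewpoint of Theorem~\ref{ganea}: replace the covers of $M$ and $N$ by sections of $p_k^M$ and $p_k^N$, and patch these sections across the gluing sphere into a section of $p_k^{M\# N}$, using the natural collapse $q:M\# N\to M\vee N$ (whose homotopy cofiber data is controlled by the $(n-2)$-connected sphere $S^{n-1}$) together with the classical identity $\cat(M\vee N)=\max\{\cat M,\cat N\}$. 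The low-$\cat$ degenerate cases that were not covered by Newton's original argument in \cite{N} are handled by direct explicit construction, since in those regimes one or both of $M, N$ is severely restricted and a categorical cover can be written down by hand.
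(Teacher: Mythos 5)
You correctly isolate the crux --- a contraction of a categorical piece of $M$ may sweep through the ball removed to form $M\# N$, and general position cannot push a codimension-zero obstruction out of the way --- but your proposed resolution does not actually overcome it. The Ganea--Schwarz patching is asserted rather than performed: from $\cat(M\vee N)=\max\{\cat M,\cat N\}=k$ you get a section of $p_k^{M\vee N}$, and composing with the collapse $q:M\# N\to M\vee N$ only yields a lift of $q$ through $p_k^{M\vee N}$, i.e.\ $\cat(q)\le k$; what is needed is a section of $p_k^{M\# N}$ itself, and there is no map $G_k(M\vee N)\to G_k(M\# N)$ over $M\# N$ to transport it. Concretely, $M\# N\simeq (M_0\vee N_0)\cup e^n$ with $M_0=M\sminus\Int B_M$, the sections restrict to $M_0\vee N_0$ without trouble, and the entire difficulty is the obstruction to extending over the top cell --- which is exactly the computation Newton's proof carries out and which is delicate precisely when $\cat\le 2$. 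Your claim that those leftover cases involve ``severely restricted'' manifolds is also not tenable: $\cat=2$ already includes all products of two spheres, $\C P^2$, and every simply connected closed manifold with a nontrivial cup product, so ``writing down a cover by hand'' there is not easier than the general problem. (A smaller point: with $W_i=(U_i\sminus\bar B_M)\sqcup(V_i\sminus\bar B_N)$ the gluing sphere $\partial B_M$ is covered by no $W_i$.)

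The paper's proof takes a different and genuinely effective route around the rerouting problem. It invokes Singhof's theorem: for $\cat M=k\ge 2$ there is a categorical partition $Q_0,\dots,Q_k$ of $M$ into codimension-zero submanifolds each of which deformation retracts onto an $(n-k)$-dimensional complex $S_i$. Since $n-k\le n-2$, a contraction of $S_i$ in $M$ is a map from an at most $(n-1)$-dimensional space and \emph{can} be pushed off a point, hence off the connect-sum ball, by general position --- this is exactly the dimension drop your open-cover approach lacks. The pieces meeting the ball ($i=0,1$) are then deformed onto their intersections with $\partial D$ and matched across the gluing sphere. If you want to salvage your argument, either import Singhof's theorem to replace the $U_i$ by pieces retracting onto codimension-$\ge 2$ skeleta, or actually carry out the obstruction computation over the top cell, including the low-category cases.
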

His proof is based on  obstruction theory.
Here we present a proof that works in full generality.
Our proof is an application of the following

\begin{thm}[W. Singhof~\cite{S} Theorem 4.4]\label{singhof}
For any closed $n$-manifold $M$ with $\cat M=k\ge 2$, there is a categorical partition $Q_0,\dots,Q_k$ into
manifolds with boundary such that $Q_i\cap Q_j$ is a $(n-1)$-manifold with boundary (possibly empty) for all $i,j$ and each $Q_i$
admits a deformation retraction onto a $(n-k)$-dimensional CW complex.
\end{thm}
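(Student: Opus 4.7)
The plan is to convert a categorical open cover of $M$ into the required geometric partition by pushing it forward to the standard simplex and pulling back a dual-cell decomposition there. Start with an open cover $U_0, \ldots, U_k$ of $M$ by $M$-contractible sets realizing $\cat M = k$, together with a subordinate smooth partition of unity $\phi_0, \ldots, \phi_k$. Define $f\colon M \to \Delta^k$ by $f(x) = \sum_{i=0}^k \phi_i(x)\, v_i$, where $v_0, \ldots, v_k$ are the vertices of the standard $k$-simplex. By construction, if $\St(v_i)$ denotes the open star of $v_i$, then $f^{-1}(\St(v_i)) \subset U_i$.

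Next, after equipping $M$ with a smooth or PL structure, approximate $f$ by a PL map with respect to a sufficiently fine triangulation of $M$ that is transverse to every face of $\Delta^k$. By standard PL general position, the preimage of the interior of a codimension-$j$ face of $\Delta^k$ is then a PL submanifold of $M$ of codimension $j$; in particular, each $f^{-1}(v_i)$ is an $(n-k)$-dimensional PL submanifold (possibly empty).

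For each $i$, let $D_i \subset \Delta^k$ be the closed dual cell of $v_i$ in the barycentric subdivision, a compact convex neighborhood of $v_i$. The family $\{D_i\}$ covers $\Delta^k$ with pairwise intersections $D_i \cap D_j$ common codimension-one faces, and each $D_i$ is the cone on $\partial D_i$ with apex $v_i$. Set $Q_i = f^{-1}(D_i)$. Transversality of $f$ ensures that $Q_i$ is a compact $n$-manifold with boundary, that $Q_i \cap Q_j = f^{-1}(D_i \cap D_j)$ is an $(n-1)$-manifold with boundary, and that $\{Q_i\}_{i=0}^k$ form the required partition. Since $D_i \subset \St(v_i)$, we have $Q_i \subset U_i$, so each $Q_i$ is $M$-contractible.

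Finally, the radial straight-line contraction of $D_i$ onto $v_i$, combined with the transverse PL structure of $f$, makes $(Q_i, f^{-1}(v_i))$ a PL regular neighborhood pair; the radial contraction therefore lifts to a strong deformation retraction of $Q_i$ onto the $(n-k)$-dimensional core $f^{-1}(v_i)$, which serves as the required CW complex. The main obstacle is the PL transversality step: one must simultaneously arrange transversality to the entire barycentric face structure of $\Delta^k$ while preserving the inclusion $f^{-1}(\St(v_i)) \subset U_i$, and in dimension four, where topological manifolds may fail to carry PL structures, one must instead invoke microbundle transversality or work smoothly from the outset.
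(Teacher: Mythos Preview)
The paper does not prove this theorem; it is quoted from Singhof's paper \cite{S} (Theorem 4.4 there) and used as a black box in the proof of Theorem~\ref{th1}. There is therefore no ``paper's own proof'' to compare your attempt against.

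That said, your outline is the standard route to Singhof-type categorical partitions and is, in broad strokes, how Singhof himself proceeds: map $M$ to $\Delta^k$ via a partition of unity subordinate to a categorical cover, put the map in general position with respect to the face stratification, and pull back the dual cells of the vertices. Two points deserve more care than you give them. First, the final step---lifting the radial contraction of $D_i$ onto $v_i$ to a deformation retraction of $Q_i$ onto $f^{-1}(v_i)$---does not follow from transversality alone; you need a mapping-cylinder or stratified-neighborhood argument to see that the preimage of the cone $D_i$ is itself a mapping cylinder over the core $f^{-1}(v_i)$. Second, you correctly flag the simultaneous-transversality and dimension-$4$ issues, but these are genuine technical points that Singhof handles in the smooth category (where every closed manifold admits a smooth structure in dimensions $\ne 4$, and the $4$-dimensional case needs separate treatment or the hypothesis of smoothability). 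For the full argument you should consult \cite{S} directly.
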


For $B\subset A\subset X$, a homotopy $H:A\times I\to X$ is called a {\em deformation} of $A$ in $X$ onto $B$ if $H_{A\times \{0\}}=1_A$,
$H(A\times \{1\})=B$, and $H(b,t)=b$ for all $b\in B$ and $t\in I=[0,1]$.
The following is well-known.
\begin{prop}\label{2.2}
Let $A\subset M$ be a subset contractible to a point in an $m$-manifold $M$ and let $B\subset A$ be a  closed $n$-ball which admits a regular neighborhood.
Then there is a deformation of $A$ in $M$ onto $B$.
\end{prop}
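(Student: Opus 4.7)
The plan is to combine the contraction of $A$ in $M$ with the retraction provided by the regular neighborhood of $B$, and then to use the Borsuk homotopy extension property (HEP) to enforce that $B$ be fixed pointwise.

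First I would set up the data. Let $N\subset M$ denote the regular neighborhood of $B$, and let $\rho\colon N\times I\to N$ be a strong deformation retraction with $\rho_0=\id_N$, $\rho_1(N)\subset B$, and $\rho_t|_B=\id_B$ for all $t\in I$. Since $A$ is contractible in $M$ and $M$ is path connected, I can choose a contraction $H\colon A\times I\to M$ with $H_0=\id_A$ and $H_1$ equal to the constant map at a chosen point $p\in\Int B$. By compactness of $A$ and continuity of $H$, there is a $t_0\in(0,1)$ with $H(A\times[t_0,1])\subset\Int N$.

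Next I would build a preliminary deformation $G\colon A\times I\to M$ by setting $G(x,t)=H(x,t)$ for $t\leq t_0$ and $G(x,t)=\rho(H(x,t),(t-t_0)/(1-t_0))$ for $t\geq t_0$. Then $G_0=\id_A$ and $G_1(A)\subset B$, but $G$ does not in general fix $B$ pointwise. To correct this, note that $G|_B\colon B\times I\to M$ is a null-homotopy of $\id_B$ to the constant at $p$, whose tracks lie in $N$ from time $t_0$ onward. Since $B$ is a ball and $N$ is contractible, $G|_B$ is homotopic, rel $B\times\{0\}$ and with intermediate tracks contained in $N$, to the constant homotopy at $\id_B$. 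Applying HEP to the pair $(A,B)$—which is a cofibration, inherited from the mapping-cylinder structure of $B$ in its regular neighborhood $N$ (or, if need be, after enlarging $A$ by $N$)—extends this correcting homotopy to all of $A$, yielding $G'\colon A\times I\to M$ with $G'_t|_B=\id_B$ for every $t$ and with $G'(A\times[t_0,1])\subset N$. A final post-composition with $\rho$ on $[t_0,1]$ produces the desired deformation $F$ that lands in $B$ at time $1$ and fixes $B$ throughout.

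The main obstacle is arranging the HEP correction so that the modified homotopy still has image eventually contained in $N$, which is what lets the final composition with $\rho$ deliver $F_1(A)\subset B$ without disturbing the condition $F_t|_B=\id_B$. This is handled by performing the correcting homotopy-of-homotopies inside the regular neighborhood $N$ from the start, using contractibility of $N$ to produce the required path-of-paths, and relying on the identity $\rho_t|_B=\id_B$ so that the final retraction step preserves the pointwise fixing of $B$.
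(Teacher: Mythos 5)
The paper offers no proof of this proposition (it is cited as ``well-known''), so there is nothing to compare against; what follows assesses your argument on its own terms. The overall strategy --- contract $A$ to a point $p\in B$, funnel the tail of the contraction through the regular neighborhood $N$, and repair the behaviour on $B$ by homotopy extension --- is the right one, and the appeal to Borsuk's HEP is legitimate in the form you actually need: $B$ is compact, hence closed in the metrizable space $A$, and $M$ is an ANR, so homotopies of $B$ extend over $A$. (Your stated justification, that $(A,B)$ is a cofibration ``inherited from the mapping cylinder structure'' of $N$, is false for an arbitrary subset $A$; and the appeal to compactness of $A$ to get a uniform $t_0$ is unjustified, since $A$ is only assumed to be a subset of $M$. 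Both are repairable --- the second by a continuous $t_0(x)$, or by reorganizing so that no $t_0$ is needed --- but as written they are gaps in the hypotheses you use.)

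The essential gap is precisely at the point you flag as ``the main obstacle.'' The homotopy extension theorem produces $G'$ with $G'_0=\id_A$ and $G'_t|_B=\id_B$, but it gives no control over $G'_1$, so the assertion $G'(A\times[t_0,1])\subset N$ simply does not follow, and without it the final composition with $\rho$ is undefined. Your proposed remedy --- carry out the correcting homotopy-of-homotopies ``inside $N$ from the start'' --- cannot be executed literally, because its initial slice is $G|_{B\times I}$, whose tracks leave $N$ for $t<t_0$. The workable fix is to prescribe the correction on $A\times\{1\}$ as well, with values in $N$: choose a homotopy $\beta:B\times I\to N$ from the constant map at $p$ to $\id_B$, extend it over $A\times\{1\}$ inside $N$ by Borsuk's HEP with target the ANR $N$, and only then extend over $B\times I\times I$ and $A\times I\times I$. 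But prescribing all four sides of the square $I\times I$ for each $b\in B$ creates a genuine obstruction: the boundary loop $H(b,\cdot)\ast\beta(b,\cdot)$ must be null-homotopic in $M$, and since $\beta(b,\cdot)$ is forced to lie in the simply connected set $N$, this class in $\pi_1(M)$ depends only on the contraction $H$ and is nontrivial for a bad choice of $H$ (e.g., one that drags $B$ around a noncontractible loop before returning to $N$). A correct proof must therefore first normalize the contraction --- replace $H$ by its concatenation with a suitable loop at $p$ so that $H(b,\cdot)$ becomes homotopic rel endpoints to a path in $N$. Your write-up contains neither this normalization nor any substitute for it, so the argument does not close.
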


{\bf Proof of Theorem~\ref{th1}.} Let $n=\dim M=\dim N$. Suppose that $\cat M,\cat N\le k$. We show that $\cat(M\#N)\le k$.
If $k=1$, the statement obviously follows from the fact that $M$ and $N$ are homeomorphic to the sphere. We assume that $k\ge 2$.
Let $Q_0,\dots Q_k$ be a partition of $M$ into $M$-contractible subsets as in Singhof's theorem. We may assume that $W_0\cap W_1\ne\emptyset$. Moreover, we may assume that there is a closed topological $n$-ball $D\subset M$ with a collar in $M$ and $D_0=D\cap W_0$, $D_1=D\cap W_1$ such that the triad $(D,D_0,D_1)$ is homeomorphic to the triad
$(B, B_+,B_-)$, where $B$ is the unit ball in $\mathbb R^n$, $B_+=B\cap{\mathbb R}_+^n$,  $B_-=B\cap\mathbb R_-^n$, $\mathbb R_+^n=\{(x_1,\dots x_n)\in\mathbb R^n\mid x_n\ge 0\}$ and
$\mathbb R_-^n=\{(x_1,\dots x_n)\in\mathbb R^n\mid x_n\le 0\}$ are the half-spaces. Similarly, we may assume that there is a categorical partition
$V_0,\dots, V_k$ of $N$ as in Theorem~\ref{singhof} and a closed $n$-ball $D'$ with a collar such that the triad $(D',D'_0,D'_1)$ is homeomorphic to the triad
$(B, B_+,B_-)$, where $D'_0=D'\cap V_0$, $D'_1=D'\cap V_1$. 

We may assume that the connected sum $M\# N$
is realized as a subset $M\# N=M\cup N\setminus \Int D\subset M\cup_hN$  for some homeomorphism
$h:D'\to D$ that preserves the triad structures.

Let $W_0=(Q_0\setminus \Int D)\cup (V_0\setminus \Int D')$, $W_1=(Q_1\setminus \Int D)\cup (V_1\setminus \Int D')$, and
$W_i=Q_i\cup V_i$ for $i=2,\dots, k$. Note that $Q_i\cap V_i=\emptyset$ for $i\ge 2$. By Singhof's theorem each $Q_i$ can be deformed to an $(n-k)$-dimensional subset $S_i$ contractible in $M$. Since $k\ge 2$,  there is a contraction of $S_i$ to a point in $M$ that misses a given point. Hence, there is a contraction of $S_i$ to a point in $M$ that  misses the ball $D$.
Thus $Q_i$ for $i\ge 2$ can be contracted to a point in $M\# N$.
Similarly, for $i\ge 2$ the set $V_i$ can be contracted to a point in $M\# N$. Hence the sets $W_i$ for $i\ge 2$ are categorical.
Let $A_i= Q_i\cap \partial D$.
For $i=0,1$ we show that there is a deformation of $Q_i\setminus \Int D$ in $M\#N$ to $A_i$.
Since $Q_i\setminus \Int D$ is homeomorphic to $Q_i$, it can be deformed onto $(n-k)$-dimensional subset $S_i$ contractible in $M$.
A contraction of $S_i$ to a point can be chosen missing $c_0\in \Int D$. By Proposition~\ref{2.2} there is a deformation of $Q_i\setminus \Int D$
in $M\setminus\{c_0\}$ onto $A_i$ fixing $A_i$. Similarly, for $i=0,1$ there is a deformation of 
$V_i\setminus \Int D'$ 
in $N\setminus\{c_0'\}$ to $A_i=V_i\cap\partial D'$ fixing $A_i$ where $c_0'\in \Int D'$. 
Applying the radial projections from $c_0$ and $c_0'$ gives us such deformations in $M\# N$. Pasting these two deformations defines a deformation of $W_i$
$i=0,1$  in $M\#N$ to $A_i$. Since the sets $A_i$ are contractible, it follows that the sets $W_i$, $i=0,1$ are categorical.\qed

\end{document}